\theoremstyle{plain}
\newtheorem{Thm}{Theorem}[section]
\newtheorem{Lem}[Thm]{Lemma}
\newtheorem{Cor}[Thm]{Corollary}
\newtheorem{Pro}[Thm]{Proposition}
\newtheorem{Prp}[Thm]{Properties}
\newtheorem{Sub}[Thm]{Sublemma}
\theoremstyle{definition}
\newtheorem{Def}[Thm]{Definition}
\newtheorem{Exm}[Thm]{Example}
\newtheorem{Exs}[Thm]{Examples}
\theoremstyle{remark}
\newtheorem{Rem}[Thm]{Remark}
\newtheorem{Rms}[Thm]{Remarks}
\newtheorem*{Com}{Commentary}
\newcommand{\myEmail}{piotr.niemiec@uj.edu.pl}
\newcommand{\myAddress}{\noindent{}Piotr Niemiec\\{}Jagiellonian University\\{}Institute of Mathematics\\{}
   ul. \L{}ojasiewicza 6\\{}30-348 Krak\'{o}w\\{}Poland}
\newcommand{\myData}{\author[P. Niemiec]{Piotr Niemiec}\address{\myAddress}\email{\myEmail}}
\newcommand{\NNN}{\mathbb{N}}
\newcommand{\RRR}{\mathbb{R}}
\newcommand{\BBb}{\CMcal{B}}\newcommand{\CCc}{\CMcal{C}}\newcommand{\DDd}{\CMcal{D}}
\newcommand{\EEe}{\CMcal{E}}\newcommand{\FFf}{\CMcal{F}}\newcommand{\GGg}{\CMcal{G}}\newcommand{\HHh}{\CMcal{H}}
\newcommand{\KKk}{\CMcal{K}}\newcommand{\LLl}{\CMcal{L}}
\newcommand{\OOo}{\CMcal{O}}
\newcommand{\TTt}{\CMcal{T}}
\newcommand{\UUu}{\CMcal{U}}\newcommand{\VVv}{\CMcal{V}}\newcommand{\WWw}{\CMcal{W}}
\newcommand{\AaA}{\EuScript{A}}\newcommand{\CcC}{\EuScript{C}}
\newcommand{\DdD}{\EuScript{D}}\newcommand{\FfF}{\EuScript{F}}
\newcommand{\MmM}{\EuScript{M}}
\newcommand{\SsS}{\EuScript{S}}
\newcommand{\WwW}{\EuScript{W}}
\newcommand{\Dd}{\mathfrak{D}}
\newcommand{\mM}{\mathfrak{m}}
\newcommand{\SECT}[1]{\section{#1}\renewcommand{\theequation}{\thesection-\arabic{equation}}\setcounter{equation}{0}}
\newcounter{help}
\newcommand{\ITE}[3]{\ifthenelse{#1}{#2}{#3}}\newcommand{\ITEE}[3]{\ITE{\equal{#1}{#2}}{#3}{}}
\newcommand{\im}{\operatorname{im}}\newcommand{\id}{\operatorname{id}}
\newcommand{\intt}{\operatorname{int}}\newcommand{\lin}{\operatorname{lin}}
\newcommand{\card}{\operatorname{card}}
\newcommand{\sgn}{\operatorname{sgn}}
\newcommand{\ord}{\operatorname{ord}}
\newcommand{\cov}{\operatorname{cov}}
\newcommand{\comp}{\operatorname{comp}}
\newcommand{\mesh}{\operatorname{mesh}}\newcommand{\st}{\operatorname{st}}
\newcommand{\scalarr}{\langle\cdot,\mathrm{-}\rangle}
\newcommand{\scalar}[2]{\langle #1,#2\rangle}\newcommand{\leqsl}{\leqslant}\newcommand{\geqsl}{\geqslant}
\newcommand{\epsi}{\varepsilon}\newcommand{\varempty}{\varnothing}\newcommand{\dd}{\colon}
\newcommand{\inj}{\hookrightarrow}
\newcommand{\TFCAE}{The following conditions are equivalent:}
\newcommand{\tfcae}{the following conditions are equivalent:}\newcommand{\iaoi}{if and only if}
\newcommand{\THM}[1]{Theorem~\textup{\ref{thm:#1}}}
\newcommand{\COR}[1]{Corollary~\textup{\ref{cor:#1}}}
\newcommand{\LEM}[1]{Lemma~\textup{\ref{lem:#1}}}\newcommand{\PRO}[1]{Proposition~\textup{\ref{pro:#1}}}
\newcommand{\EXM}[1]{Example~\textup{\ref{exm:#1}}}
\newenvironment{thm}[1]{\begin{Thm}\label{thm:#1}}{\end{Thm}}\newenvironment{lem}[1]{\begin{Lem}\label{lem:#1}}{\end{Lem}}
\newenvironment{cor}[1]{\begin{Cor}\label{cor:#1}}{\end{Cor}}\newenvironment{pro}[1]{\begin{Pro}\label{pro:#1}}{\end{Pro}}
\newenvironment{dfn}[1]{\begin{Def}\label{def:#1}}{\end{Def}}
\newenvironment{exm}[1]{\begin{Exm}\label{exm:#1}}{\end{Exm}}
\newenvironment{rem}[1]{\begin{Rem}\label{rem:#1}}{\end{Rem}}
\newenvironment{lemm}[2]{\begin{Lem}[#2]\label{lem:#1}}{\end{Lem}}
\newcommand{\bibITEM}[2]{\ITE{\equal{#2}{}}{\bibitem{#1} }{\bibitem[#2]{#1} }}
\newcommand{\BIB}[8]{
   \bibITEM{#1}{#8} #2, \textit{#3}, #4{} \textbf{#5} (#6), #7.}
\newcommand{\myBIB}[6][P. Niemiec]{#1, \textit{#2}, #3{}\ITE{\equal{#4}{}}{}{ \textbf{#4}} (#5), #6.}
\newcommand{\BIb}[6]{
   \bibITEM{#1}{#6} #2, \textit{#3}, #4, #5.}
\newcommand{\BiB}[9]{
   \bibITEM{#1}{#9} #2, \textit{#3}, #4{} \textit{#5}, #6, #7, #8.}
\newcommand{\myBAPP}[3][P. Niemiec]{
   #1, \textit{#2}, #3}
\newcommand{\jRN}[2][]{
   \ITEE{#2}{ActaM}{\ITE{\equal{#1}{+}}
      {Acta Mathematica}{Acta Math.}}
   \ITEE{#2}{ActaMSinES}{\ITE{\equal{#1}{+}}
      {Acta Mathematica Sinica (English Series)}{Acta Math. Sin. (Engl. Ser.)}}
   \ITEE{#2}{AdvM}{\ITE{\equal{#1}{+}}
      {Advances in Mathematics}{Adv. in Math.}}
   \ITEE{#2}{ACS}{\ITE{\equal{#1}{+}}
      {Applied Categorical Structures}{Appl. Categor. Struct.}}
   \ITEE{#2}{ActaSM}{\ITE{\equal{#1}{+}}
      {Acta Scientiarum Mathematicarum}{Acta Sci. Math.}}
   \ITEE{#2}{AmJM}{\ITE{\equal{#1}{+}}
      {American Journal of Mathematics}{Amer. J. Math.}}
   \ITEE{#2}{AmMMon}{\ITE{\equal{#1}{+}}
      {American Mathematical Monthly}{Amer. Math. Mon.}}
   \ITEE{#2}{AnnSciEcNormSupT}{\ITE{\equal{#1}{+}}
      {Annales Scientifiques de l'\'{E}cole Normale Sup\'{e}rieure (3)}{Ann. Sci. \'{E}c. Norm. Sup\'{e}r. (3)}}
   \ITEE{#2}{AnnM}{\ITE{\equal{#1}{+}}
      {Annals of Mathematics}{Ann. Math.}}
   \ITEE{#2}{AnnProb}{\ITE{\equal{#1}{+}}
      {The Annals of Probability}{Ann. Probab.}}
   \ITEE{#2}{AnnPALog}{\ITE{\equal{#1}{+}}
      {Annals of Pure and Applied Logic}{Ann. Pure Appl. Logic}}
   \ITEE{#2}{APM}{\ITE{\equal{#1}{+}}
      {Annales Polonici Mathematici}{Ann. Polon. Math.}}
   \ITEE{#2}{ArchM}{\ITE{\equal{#1}{+}}
      {Archiv der Mathematik}{Arch. Math.}}
   \ITEE{#2}{AttiAccLincRendNat}{\ITE{\equal{#1}{+}}
      {Atti della Accademia Nazionale dei Lincei. Rendiconti. Classe di Scienze Fisiche, Matematiche e Naturali}
      {Atti Accad. Naz. Lincei Rend. Cl. Sci. Fis. Mat. Nat.}}
   \ITEE{#2}{BAMS}{\ITE{\equal{#1}{+}}
      {Bulletin of the American Mathematical Society}{Bull. Amer. Math. Soc.}}
   \ITEE{#2}{BAustrMS}{\ITE{\equal{#1}{+}}
      {Bulletin of the Australian Mathematical Society}{Bull. Austral. Math. Soc.}}
   \ITEE{#2}{BLondMS}{\ITE{\equal{#1}{+}}
      {Bulletin of the London Mathematical Sociecy}{Bull. Lond. Math. Soc.}}
   \ITEE{#2}{BAPolSSSM}{\ITE{\equal{#1}{+}}
      {Bulletin de l'Acad\'{e}mie Polonaise des Sciences. S\'{e}rie des Sciences Math\'{e}matiques}
      {Bull. Acad. Pol. Sci. S\'{e}r. Sci. Math.}}
   \ITEE{#2}{BullSM}{\ITE{\equal{#1}{+}}
      {Bulletin des Sciences Math\'{e}matiques}{Bull. Sci. Math.}}
   \ITEE{#2}{BullPol}{\ITE{\equal{#1}{+}}
      {Bulletin of the Polish Academy of Sciences: Mathematics}{Bull. Pol. Acad. Sci. Math.}}
   \ITEE{#2}{CanadJM}{\ITE{\equal{#1}{+}}
      {Canadian Journal Mathematics}{Canad. J. Math.}}
   \ITEE{#2}{CollectM}{\ITE{\equal{#1}{+}}
      {Collectanea Mathematica}{Collect. Math.}}
   \ITEE{#2}{CMUC}{\ITE{\equal{#1}{+}}
      {Commentationes Mathematicae Universitatis Carolinae}{Comment. Math. Univ. Carolin.}}
   \ITEE{#2}{CRParis}{\ITE{\equal{#1}{+}}
      {C. R. Paris}{C. R. Paris}}
   \ITEE{#2}{CRASParis}{\ITE{\equal{#1}{+}}
      {Comptes Rendus de l'Acad\'{e}mie des Sciences. Paris}{C. R. Acad. Sci. Paris}}
   \ITEE{#2}{CEurJM}{\ITE{\equal{#1}{+}}
      {Central European Journal of Mathematics}{Cent. Eur. J. Math.}}
   \ITEE{#2}{CMHelv}{\ITE{\equal{#1}{+}}
      {Commentarii Mathematici Helvetici}{Comment. Math. Helv.}}
   \ITEE{#2}{CollM}{\ITE{\equal{#1}{+}}
      {Colloquium Mathematicum}{Coll. Math.}}
   \ITEE{#2}{ComposM}{\ITE{\equal{#1}{+}}
      {Compositio Mathematica}{Compos. Math.}}
   \ITEE{#2}{CzMJ}{\ITE{\equal{#1}{+}}
      {Czechoslovak Mathematical Journal}{Czech. Math. J.}}
   \ITEE{#2}{DissM}{\ITE{\equal{#1}{+}}
      {Dissertationes Mathematicae}{Dissert. Math.}}
   \ITEE{#2}{DANSSSR}{\ITE{\equal{#1}{+}}
      {Doklady Akademii Nauk SSSR}{Dokl. Akad. Nauk SSSR}}
   \ITEE{#2}{DukeMJ}{\ITE{\equal{#1}{+}}
      {Duke Mathematical Journal}{Duke Math. J.}}
   \ITEE{#2}{ELA}{\ITE{\equal{#1}{+}}
      {The Electronic Journal of Linear Algebra}{Electron. J. Linear Algebra}}
   \ITEE{#2}{ExtrM}{\ITE{\equal{#1}{+}}
      {Extracta Mathematicae}{Extracta Math.}}
   \ITEE{#2}{FM}{\ITE{\equal{#1}{+}}
      {Fundamenta Mathematicae}{Fund. Math.}}
   \ITEE{#2}{FAA}{\ITE{\equal{#1}{+}}
      {Functional Analysis and its Applications}{Funct. Anal. Appl.}}
   \ITEE{#2}{FunkAnalPril}{\ITE{\equal{#1}{+}}
      {Funktsional'ny\u{\i} Analiz i Ego Prilozheniya}{Funkts. Anal. Prilozh.}}
   \ITEE{#2}{GTopA}{\ITE{\equal{#1}{+}}
      {General Topology and its Applications}{General Topol. Appl.}}
   \ITEE{#2}{HJM}{\ITE{\equal{#1}{+}}
      {Houston Journal of Mathematics}{Houston J. Math.}}
   \ITEE{#2}{IllinoisJM}{\ITE{\equal{#1}{+}}
      {Illinois Journal of Mathematics}{Illinois J. Math.}}
   \ITEE{#2}{IndagMP}{\ITE{\equal{#1}{+}}
      {Indagationes Mathematicae (Proceedings)}{Indagationes Math. Proc.}}
   \ITEE{#2}{IndianaUMJ}{\ITE{\equal{#1}{+}}
      {Indiana University Mathematical Journal}{Indiana Univ. Math. J.}}
   \ITEE{#2}{InHauEtSPM}{\ITE{\equal{#1}{+}}
      {Inst. Hautes \'{E}tudes Sci. Publ. Math.}{Inst. Hautes \'{E}tudes Sci. Publ. Math.}}
   \ITEE{#2}{IEOT}{\ITE{\equal{#1}{+}}
      {Integral Equations and Operator Theory}{Integr. Equ. Oper. Theory}}
   \ITEE{#2}{IsraelJM}{\ITE{\equal{#1}{+}}
      {Israel Journal of Mathematics}{Israel J. Math.}}
   \ITEE{#2}{JAusMSA}{\ITE{\equal{#1}{+}}
      {Journal of the Australian Mathematical Society. Series A}{J. Aust. Math. Soc. Ser. A}}
   \ITEE{#2}{JCA}{\ITE{\equal{#1}{+}}
      {Journal of Convex Analysis}{J. Convex Anal.}}
   \ITEE{#2}{JChinUST}{\ITE{\equal{#1}{+}}
      {J. China Univ. Sci. Tech.}{J. China Univ. Sci. Tech.}}
   \ITEE{#2}{JFA}{\ITE{\equal{#1}{+}}
      {Journal of Functional Analysis}{J. Funct. Anal.}}
   \ITEE{#2}{JKoreanMS}{\ITE{\equal{#1}{+}}
      {Journal of the Korean Mathematical Society}{J. Korean Math. Soc.}}
   \ITEE{#2}{JMAnApp}{\ITE{\equal{#1}{+}}
      {J. Math. Anal. Appl.}{J. Math. Anal. Appl.}}
   \ITEE{#2}{JOT}{\ITE{\equal{#1}{+}}
      {Journal of Operator Theory}{J. Operator Theory}}
   \ITEE{#2}{KodaiMSemRep}{\ITE{\equal{#1}{+}}
      {Kodai Math. Sem. Rep.}{Kodai Math. Sem. Rep.}}
   \ITEE{#2}{LAA}{\ITE{\equal{#1}{+}}
      {Linear Algebra and its Applications}{Linear Algebra Appl.}}
   \ITEE{#2}{LMLA}{\ITE{\equal{#1}{+}}
      {Linear and Multilinear Algebra}{Linear Multilinear Algebra}}
   \ITEE{#2}{LNM}{\ITE{\equal{#1}{+}}
      {Lecture Notes in Mathematics}{Lecture Notes Math.}}
   \ITEE{#2}{MathJap}{\ITE{\equal{#1}{+}}
      {Math. Japon.}{Math. Japon.}}
   \ITEE{#2}{MLQ}{\ITE{\equal{#1}{+}}
      {Mathematical Logic Quarterly}{Math. Log. Q.}}
   \ITEE{#2}{MProcCambPhS}{\ITE{\equal{#1}{+}}
      {Mathematical Proceedings of the Cambridge Philosophical Society}{Math. Proc. Cambridge Phil. Soc.}}
   \ITEE{#2}{MMag}{\ITE{\equal{#1}{+}}
      {Mathematics Magazine}{Math. Mag.}}
   \ITEE{#2}{MSb}{\ITE{\equal{#1}{+}}
      {Matematicheski\u{\i} Sbornik}{Mat. Sb.}}
   \ITEE{#2}{MStud}{\ITE{\equal{#1}{+}}
      {Matematychni Studi\"{\i}}{Mat. Stud.}}
   \ITEE{#2}{MScand}{\ITE{\equal{#1}{+}}
      {Mathematica Scandinavica}{Math. Scand.}}
   \ITEE{#2}{MAnn}{\ITE{\equal{#1}{+}}
      {Mathematische Annalen}{Math. Ann.}}
   \ITEE{#2}{MAMS}{\ITE{\equal{#1}{+}}
      {Memoirs of the American Mathematical Society}{Mem. Amer. Math. Soc.}}
   \ITEE{#2}{MichMJ}{\ITE{\equal{#1}{+}}
      {Michigan Mathematical Journal}{Mich. Math. J.}}
   \ITEE{#2}{MonatM}{\ITE{\equal{#1}{+}}
      {Monatshefte f\"{u}r Mathematik}{Mh. Math.}}
   \ITEE{#2}{NonlinA}{\ITE{\equal{#1}{+}}
      {Nonlinear Analysis: Theory, Methods \& Applications}{Nonlinear Anal.}}
   \ITEE{#2}{NAMS}{\ITE{\equal{#1}{+}}
      {Notices of the American Mathematical Society}{Notices Amer. Math. Soc.}}
   \ITEE{#2}{OpusM}{\ITE{\equal{#1}{+}}
      {Opuscula Mathematica}{Opuscula Math.}}
   \ITEE{#2}{PacJM}{\ITE{\equal{#1}{+}}
      {Pacific Journal of Mathematics}{Pacific J. Math.}}
   \ITEE{#2}{PeriodMHung}{\ITE{\equal{#1}{+}}
      {Periodica Mathematica Hungarica}{Period. Math. Hungarica}}
   \ITEE{#2}{PAMS}{\ITE{\equal{#1}{+}}
      {Proceedings of the American Mathematical Society}{Proc. Amer. Math. Soc.}}
   \ITEE{#2}{ProcCambPhS}{\ITE{\equal{#1}{+}}
      {Proceedings of the Cambridge Philosophical Society}{Proc. Cambridge Phil. Soc.}}
   \ITEE{#2}{ProcImpAcadTokyo}{\ITE{\equal{#1}{+}}
      {Proc. Imp. Acad. Tokyo}{Proc. Imp. Acad. Tokyo}}
   \ITEE{#2}{ProcKonink}{\ITE{\equal{#1}{+}}
      {Proceedings of the Koninklijke Nederlandse Akademie van Wetenschappen}{Nederl. Akad. Wetensch. Proc. Ser. A}}
   \ITEE{#2}{PLondMS}{\ITE{\equal{#1}{+}}
      {Proceedings of the London Mathematical Society}{Proc. London Math. Soc.}}
   \ITEE{#2}{PNatlUSA}{\ITE{\equal{#1}{+}}
      {Proceedings of the National Academy of Sciences of the United States of America}{Proc. Natl. Acad. Sci. USA}}
   \ITEE{#2}{PublRIMSKyoto}{\ITE{\equal{#1}{+}}
      {Publ. Res. Inst. Math. Sci. Kyoto Univ.}{Publ. Res. Inst. Math. Sci.}}
   \ITEE{#2}{PWN}{\ITE{\equal{#1}{+}}
      {PWN -- Polish Scientific Publishers, Warszawa}{PWN -- Polish Scientific Publishers, Warszawa}}
   \ITEE{#2}{RCMP}{\ITE{\equal{#1}{+}}
      {Rendiconti del Circolo Matematico di Palermo}{Rend. Circ. Mat. Palermo}}
   \ITEE{#2}{RussMS}{\ITE{\equal{#1}{+}}
      {Russian Mathematical Surveys}{Russian Math. Surveys}}
   \ITEE{#2}{SbM}{\ITE{\equal{#1}{+}}
      {Sbornik: Mathematics}{Sb. Math.}}
   \ITEE{#2}{SciRepTokyoA}{\ITE{\equal{#1}{+}}
      {Science Reports of Tokyo Kyoiku Daigaku, Section A}{Sci. Rep. Tokyo Kyoiku Daigaku Sect. A}}
   \ITEE{#2}{SeminProbStras}{\ITE{\equal{#1}{+}}
      {S\'{e}minaire de probabilit\'{e}s de Strasbourg}{S\'{e}min. Prob. Strasbourg}}
   \ITEE{#2}{SIAMJMAA}{\ITE{\equal{#1}{+}}
      {SIAM Journal on Matrix Analysis and Applications}{SIAM J. Matrix Anal. Appl.}}
   \ITEE{#2}{SibirMZ}{\ITE{\equal{#1}{+}}
      {Sibirski\v{\i} Mat. \v{Z}hurnal}{Sibirsk. Mat. \v{Z}.}}
   \ITEE{#2}{SM}{\ITE{\equal{#1}{+}}
      {Studia Mathematica}{Studia Math.}}
   \ITEE{#2}{TAMS}{\ITE{\equal{#1}{+}}
      {Transactions of the American Mathematical Society}{Trans. Amer. Math. Soc.}}
   \ITEE{#2}{TohokuMJ}{\ITE{\equal{#1}{+}}
      {T\^{o}hoku Mathematical Journal}{T\^{o}hoku Math. J.}}
   \ITEE{#2}{TomskUnivRev}{\ITE{\equal{#1}{+}}
      {Tomsk Universitet Review}{Tomsk. Univ. Rev.}}
   \ITEE{#2}{TopA}{\ITE{\equal{#1}{+}}
      {Topology and its Applications}{Topology Appl.}}
   \ITEE{#2}{TopMethNA}{\ITE{\equal{#1}{+}}
      {Topological Methods in Nonlinear Analysis}{Topol. Methods Nonlinear Anal.}}
   \ITEE{#2}{TsukubaJM}{\ITE{\equal{#1}{+}}
      {Tsukuba Journal of Mathematics}{Tsukuba J. Math.}}
   \ITEE{#2}{UspekhiMN}{\ITE{\equal{#1}{+}}
      {Uspekhi Matem. Nauk}{Uspekhi Mat. Nauk}}
   }
\newcommand{\paplist}[3][]{
   \ITEE{#3}{NIAkhiezer,IMGlazman1993}{
      \BIb{#2}{N.I. Akhiezer and I.M. Glazman}
         {Theory of Linear Operators in Hilbert Space}
         {Dover Publications, Inc., New York}{1993}{#1}}
   \ITEE{#3}{RDAnderson1967}{
      \BIB{#2}{R.D. Anderson}
         {On topological infinite deficiency}
         {\jRN{MichMJ}}{14}{1967}{365--383}{#1}}
   \ITEE{#3}{RDAnderson,JMcCharen1970}{
      \BIB{#2}{R.D. Anderson and J. McCharen}
         {On extending homeomorphisms to Fr\'{e}chet manifolds}
         {\jRN{PAMS}}{25}{1970}{283--289}{#1}}
   \ITEE{#3}{RDAnderson,DWCurtis,JVanMill1982}{
      \BIB{#2}{R.D. Anderson, D.W. Curtis, J. van Mill}
         {A fake topological Hilbert space}
         {\jRN{TAMS}}{272}{1982}{311--321}{#1}}
   \ITEE{#3}{RArens,JEells1956}{
      \BIB{#2}{R. Arens and J. Eells}
         {On embedding uniform and topological spaces}
         {\jRN{PacJM}}{6}{1956}{397--403}{#1}}
   \ITEE{#3}{NAronszajn,PPanitchpakdi1956}{
      \BIB{#2}{N. Aronszajn and P. Panitchpakdi}
         {Extension of uniformly continuous transformations and hyperconvex metric spaces}
         {\jRN{PacJM}}{6}{1956}{405--439}{#1}}
   \ITEE{#3}{KJBabenko1948}{
      \BIB{#2}{K.J. Babenko}
         {On conjugate functions}
         {\jRN{DANSSSR}}{62}{1948}{157--160}{#1}}
   \ITEE{#3}{TBanakh1995}{
      \BIB{#2}{T.O. Banakh}
         {Topology of spaces of probability measures, I}
         {\jRN{MStud}}{5}{1995}{65--87 (Russian)}{#1}}
   \ITEE{#3}{TBanakh1995a}{
      \BIB{#2}{T.O. Banakh}
         {Topology of spaces of probability measures, II}
         {\jRN{MStud}}{5}{1995}{88--106 (Russian)}{#1}}
   \ITEE{#3}{TBanakh1998}{
      \BIB{#2}{T. Banakh}
         {Characterization of spaces admitting a homotopy dense embedding into a Hilbert manifold}
         {\jRN{TopA}}{86}{1998}{123--131}{#1}}
   \ITEE{#3}{TBanakh,TNRadul1997}{
      \BIB{#2}{T.O. Banakh and T.N. Radul}
         {Topology of spaces of probability measures}
         {\jRN{SbM}}{188}{1997}{973--995}{#1}}
   \ITEE{#3}{TBanakh,TRadul,MZarichnyi1996}{
      \BIb{#2}{T. Banakh, T. Radul, M. Zarichnyi}
         {Absorbing sets in infinite-dimensional manifolds}
         {VNTL Publishers, Lviv}{1996}{#1}}
   \ITEE{#3}{TBanakh,IZarichnyy2008}{
      \BIB{#2}{T. Banakh and I. Zarichnyy}
         {Topological groups and convex sets homeomorphic to non-separable Hilbert spaces}
         {\jRN{CEurJM}}{6}{2008}{77--86}{#1}}
   \ITEE{#3}{HBecker,ASKechris1996}{
      \BIb{#2}{H. Becker and A.S. Kechris}
         {The Descriptive Set Theory of Polish Group Actions \textup{(London Math. Soc. Lecture Note Series, vol. 232)}}
         {University Press, Cambridge}{1996}{#1}}
   \ITEE{#3}{GBeer1993}{
      \BIb{#2}{G. Beer}
         {Topologies on Closed and Closed Convex Sets \textup{(Mathematics and Its Applications)}}
         {Kluwer Academic Publishers, Dordrecht}{1993}{#1}}
   \ITEE{#3}{NEBenamara,NNikolski1999}{
      \BIB{#2}{N.E. Benamara and N. Nikolski}
         {Resolvent tests for similarity to a normal operator}
         {\jRN{PLondMS}}{78}{1999}{585--626}{#1}}
   \ITEE{#3}{YBenyamini,JLindenstrauss2000}{
      \BIb{#2}{Y. Benyamini and J. Lindenstrauss}
         {Geometric nonlinear functional analysis I}
         {AMS Colloquium Publications 48}{2000}{#1}}
   \ITEE{#3}{SKBerberian1974}{
      \BIb{#2}{S.K. Berberian}
         {Lectures in Functional Analysis and Operator Theory}
         {Graduate Texts in Mathematics 15, Springer-Verlag, New York}{1974}{#1}}
   \ITEE{#3}{SNBernstein1954}{
      \BIb{#2}{S.N. Bernstein}
         {Collected Works II}
         {Akad. Nauk SSSR, Moscow}{1954 (Russian)}{#1}}
   \ITEE{#3}{CzBessaga,APelczynski1972}{
      \BIB{#2}{Cz. Bessaga and A. Pe\l{}czy\'{n}ski}
         {On spaces of measurable functions}
         {\jRN{SM}}{44}{1972}{597--615}{#1}}
   \ITEE{#3}{CzBessaga,APelczynski1975}{
      \BIb{#2}{Cz. Bessaga and A. Pe\l{}czy\'{n}ski}
         {Selected topics in infinite-dimensional topology}
         {\jRN{PWN}}{1975}{#1}}
   \ITEE{#3}{MBestvina,JMogilski1986}{
      \BIB{#2}{M. Bestvina and J. Mogilski}
         {Characterizing certain incomplete infinite-dimensional absolute retracts}
         {\jRN{MichMJ}}{33}{1986}{291--313}{#1}}
   \ITEE{#3}{MBestvina,PBowers,JMogilsky,JWalsh1986}{
      \BIB{#2}{M. Bestvina, P. Bowers, J. Mogilsky, J. Walsh}
         {Characterization of Hilbert space manifolds revisited}
         {\jRN{TopA}}{24}{1986}{53--69}{#1}}
   \ITEE{#3}{RBhatia1997}{
      \BIb{#2}{R. Bhatia}
         {Matrix Analysis}
         {Springer, New York}{1997}{#1}}
   \ITEE{#3}{GBirkhoff1936}{
      \BIB{#2}{G. Birkhoff}
         {A note on topological groups}
         {\jRN{ComposM}}{3}{1936}{427--430}{#1}}
   \ITEE{#3}{MSBirman,MZSolomjak1987}{
      \BIb{#2}{M.S. Birman and M.Z. Solomjak}
         {Spectral Theory of Self-Adjoint Operators in Hilbert Space}
         {D. Reidel Publishing Co., Dordrecht}{1987}{#1}}
   \ITEE{#3}{EBishop1961}{
      \BIB{#2}{E. Bishop}
         {A generalization of the Stone-Weierstrass theorem}
         {\jRN{PacJM}}{11}{1961}{777--783}{#1}}
   \ITEE{#3}{BBlackadar2006}{
      \BIb{#2}{B. Blackadar}{Operator Algebras. Theory of $\CCc^*$-algebras and von Neumann algebras 
         \textup{(Encyclopaedia of Mathematical Sciences, vol. 122: Operator Algebras and Non-Commutative Geometry III)}}
         {Springer-Verlag, Berlin-Heidelberg}{2006}{#1}}
   \ITEE{#3}{JBlass,WHolsztynski1972}{
      \BIB{#2}{J. Blass and W. Holszty\'{n}ski}
         {Cubical polyhedra and homotopy III}
         {\jRN{AttiAccLincRendNat}}{53}{1972}{275--279}{#1}}
   \ITEE{#3}{FFBonsall,NJDuncan1973}{
      \BIb{#2}{F.F. Bonsall and N.J. Duncan}
         {Complete Normed Algebras}
         {Springer Verlag, Berlin}{1973}{#1}}
   \ITEE{#3}{NBourbaki2002}{
      \BIb{#2}{N. Bourbaki}
         {Lie Groups and Lie Algebras, Chapters 4--6}
         {Springer, New York}{2002}{#1}}
   \ITEE{#3}{PLBowers1989}{
      \BIB{#2}{P.L. Bowers}
         {Limitation topologies on function spaces}
         {\jRN{TAMS}}{314}{1989}{421--431}{#1}}
   \ITEE{#3}{ABrown1953}{
      \BIB{#2}{A. Brown}
         {On a class of operators}
         {\jRN{PAMS}}{4}{1953}{723--728}{#1}}
   \ITEE{#3}{ABrown,CKFong,DWHadwin1978}{
      \BIB{#2}{A. Brown, C.-K. Fong, D.W. Hadwin}
         {Parts of operators on Hilbert space}
         {\jRN{IllinoisJM}}{22}{1978}{306--314}{#1}}
   \ITEE{#3}{AMBruckner,JBBruckner,BSThomson1997}{
      \BIb{#2}{A.M. Bruckner, J.B. Bruckner, B.S. Thomson}
         {Real Analysis}
         {Prentice-Hall, New Jersey}{1997}{#1}}
   \ITEE{#3}{PJCameron,AMVershik2006}{
      \BIB{#2}{P.J. Cameron and A.M. Vershik}
         {Some isometry groups of Urysohn space}
         {\jRN{AnnPALog}}{143}{2006}{70--78}{#1}}
   \ITEE{#3}{CCastaing1966}{
      \BIB{#2}{C. Castaing}
         {Quelques probl\`{e}mes de mesurabilit\'{e} li\'{e}es \`{a} la th\'{e}orie de la commande}
         {\jRN{CRParis}}{262}{1966}{409--411}{#1}}
   \ITEE{#3}{JAVanCasteren1980}{
      \BIB{#2}{J.A. van Casteren}
         {A problem of Sz.-Nagy}
         {\jRN{ActaSM}}{42}{1980}{189--194}{#1}}
   \ITEE{#3}{JAVanCasteren1983}{
      \BIB{#2}{J.A. van Casteren}
         {Operators similar to unitary or selfadjoint ones}
         {\jRN{PacJM}}{104}{1983}{241--255}{#1}}
   \ITEE{#3}{XCatepillan,MPtak,WSzymanski1994}{
      \BIB{#2}{X. Catepill\'{a}n, M. Ptak, W. Szyma\'{n}ski}
         {Multiple canonical decompositions of families of operators and a model of quasinormal families}
         {\jRN{PAMS}}{121}{1994}{1165--1172}{#1}}
   \ITEE{#3}{RCauty1994}{
      \BIB{#2}{R. Cauty}
         {Un espace m\'{e}trique lin\'{e}aire qui n'est pas un r\'{e}tracte absolu}
         {\jRN{FM}}{146}{1994}{85--99, (French)}{#1}}
   \ITEE{#3}{TAChapman1971}{
      \BIB{#2}{T.A. Chapman}
         {Deficiency in infinite-dimensional manifolds}
         {\jRN{GTopA}}{1}{1971}{263--272}{#1}}
   \ITEE{#3}{TAChapman1976}{
      \BIb{#2}{T.A. Chapman}
         {Lectures on Hilbert cube manifolds}
         {C.B.M.S. Regional Conference Series in Math. No 28, Amer. Math. Soc.}{1976}{#1}}
   \ITEE{#3}{RBChuaqui1977}{
      \BIB{#2}{R.B. Chuaqui}
         {Measures invariant under a group of transformations}
         {\jRN{PacJM}}{68}{1977}{313--329}{#1}}
   \ITEE{#3}{JBConway1985}{
      \BIb{#2}{J.B. Conway}
         {A Course in Functional Analysis}
         {Springer-Verlag, New York}{1985}{#1}}
   \ITEE{#3}{JBConway2000}{
      \BIb{#2}{J.B. Conway}
         {A Course in Operator Theory}
         {(Graduate Studies in Mathematics, vol. 21) Amer. Math. Soc., Providence}{2000}{#1}}
   \ITEE{#3}{GCorach,AMaestripieri,MMbekhta2009}{
      \BIB{#2}{G. Corach, A. Maestripieri, M. Mbekhta}
         {Metric and homogeneous structure of closed range operators}
         {\jRN{JOT}}{61}{2009}{171--190}{#1}}
   \ITEE{#3}{MJCowen,RGDouglas1978}{
      \BIB{#2}{M.J. Cowen and R.G. Douglas}
         {Complex geometry and operator theory}
         {\jRN{ActaM}}{141}{1978}{187--261}{#1}}
   \ITEE{#3}{DWCurtis1985}{
      \BIB{#2}{D.W. Curtis}
         {Boundary sets in the Hilbert cube}
         {\jRN{TopA}}{20}{1985}{201--221}{#1}}
   \ITEE{#3}{MMDay1958}{
      \BIb{#2}{M.M. Day}
         {Normed Linear Spaces}
         {Springer Verlag, Berlin}{1958}{#1}}
   \ITEE{#3}{CDellacherie1967}{
      \BIB{#2}{C. Dellacherie}
         {Un compl\'{e}ment au th\'{e}or\`{e}me de Weierstrass-Stone}
         {\jRN{SeminProbStras}}{1}{1967}{52--53}{#1}}
   \ITEE{#3}{JJDijkstra1987}{
      \BIB{#2}{J.J. Dijkstra}
         {Strong negligibility of $\sigma$-compacta does not characterize Hilbert space}
         {\jRN{PacJM}}{127}{1987}{19--30}{#1}}
   \ITEE{#3}{JJDijkstra1990}{
      \BIB{#2}{J.J. Dijkstra}
         {Characterizing Hilbert space topology in terms of strong negligibility}
         {\jRN{ComposM}}{75}{1990}{299--306}{#1}}
   \ITEE{#3}{TDobrowolski,WMarciszewski2002}{
      \BIB{#2}{T. Dobrowolski and W. Marciszewski}
         {Failure of the Factor Theorem for Borel pre-Hilbert spaces}
         {\jRN{FM}}{175}{2002}{53--68}{#1}}
   \ITEE{#3}{TDobrowolski,JMogilski1990}{
      \BiB{#2}{T. Dobrowolski and J. Mogilski}
         {Problems on Topological Classification of Incomplete Metric Spaces}{Chapter 25 in:}
         {Open Problems in Topology}{J. van Mill and G.M. Reed (eds.), North-Holland Amsterdam}{1990}{411--429}{#1}}
   \ITEE{#3}{TDobrowolski,HTorunczyk1981}{
      \BIB{#2}{T. Dobrowolski and H. Toru\'{n}czyk}
         {Separable complete ANR's admitting a group structure are Hilbert manifolds}
         {\jRN{TopA}}{12}{1981}{229--235}{#1}}
   \ITEE{#3}{RGDouglas1966}{
      \BIB{#2}{R.G. Douglas}
         {On majorization, factorization and range inclusion of operators in Hilbert space}
         {\jRN{PAMS}}{17}{1966}{413--416}{#1}}
   \ITEE{#3}{CHDowker1947}{
      \BIB{#2}{C.H. Dowker}
         {Mapping theorems for non-compact spaces}
         {\jRN{AmJM}}{69}{1947}{200--242}{#1}}
   \ITEE{#3}{CHDowker1952}{
      \BIB{#2}{C.H. Dowker}
         {Topology of metric complexes}
         {\jRN{AmJM}}{74}{1952}{555--577}{#1}}
   \ITEE{#3}{JDugundji1951}{
      \BIB{#2}{J. Dugundji}
         {An extension of Tietze's theorem}
         {\jRN{PacJM}}{1}{1951}{353--367}{#1}}
   \ITEE{#3}{JDugundji1958}{
      \BIB{#2}{J. Dugundji}
         {Absolute neighborhood retracts and local connectedness for arbitrary metric spaces}
         {\jRN{ComposM}}{13}{1958}{229--246}{#1}}
   \ITEE{#3}{JDugundji1965}{
      \BIB{#2}{J. Dugundji}
         {Locally equiconnected spaces and absolute neighborhood retracts}
         {\jRN{FM}}{57}{1965}{187--193}{#1}}
   \ITEE{#3}{NDunford,JTSchwartz1958}{
      \BIb{#2}{N. Dunford and J.T. Schwartz}
         {Linear Operators, part I}
         {Interscience Publishers, New York}{1958}{#1}}
   \ITEE{#3}{NDunford,JTSchwartz1963}{
      \BIb{#2}{N. Dunford and J.T. Schwartz}
         {Linear Operators, part II}
         {Interscience Publishers, New York}{1963}{#1}}
   \ITEE{#3}{NDunford,JTSchwartz1971}{
      \BIb{#2}{N. Dunford and J.T. Schwartz}
         {Linear Operators, part III}
         {Wiley-Interscience, New York}{1971}{#1}}
   \ITEE{#3}{MLEaton,MDPerlman1977}{
      \BIB{#2}{M.L. Eaton and M.D. Perlman}
         {Reflection groups, generalized Schur functions and the geometry of majorization}
         {\jRN{AnnProb}}{5}{1977}{829--860}{#1}}
   \ITEE{#3}{EGEffros1965}{
      \BIB{#2}{E.G. Effros}
         {The Borel space of von Neumann algebras on a separable Hilbert space}
         {\jRN{PacJM}}{15}{1965}{1153--1164}{#1}}
   \ITEE{#3}{EGEffros1966}{
      \BIB{#2}{E.G. Effros}
         {Global structure in von Neumann algebras}
         {\jRN{TAMS}}{121}{1966}{434--454}{#1}}
   \ITEE{#3}{REspinola,MAKhamsi2001}{
      \BiB{#2}{R. Espinola and M.A. Khamsi}
         {Introduction to hyperconvex spaces}{Chapter XIII in:}{Handbook of Metric Fixed Point Theory}
         {W.A. Kirk and B. Sims (editors), Kluwer Academic Publishers}{2001}{391--435}{#1}}
   \ITEE{#3}{PAFillmore,JPWilliams1971}{
      \BIB{#2}{P.A. Fillmore and J.P. Williams}
         {On operator ranges}
         {\jRN{AdvM}}{7}{1971}{254--281}{#1}}
   \ITEE{#3}{JEells,NHKuiper1969}{
      \BIB{#2}{J. Eells and N.H. Kuiper}
         {Homotopy negligible subsets in infinite-dimensional manifolds}
         {\jRN{ComposM}}{21}{1969}{151--161}{#1}}
   \ITEE{#3}{REngelking1977}{
      \BIb{#2}{R. Engelking}
         {General Topology}
         {\jRN{PWN}}{1977}{#1}}
   \ITEE{#3}{REngelking1978}{
      \BIb{#2}{R. Engelking}
         {Dimension Theory}
         {\jRN{PWN}}{1978}{#1}}
   \ITEE{#3}{REngelking1989}{
      \BIb{#2}{R. Engelking}
         {General Topology. Revised and completed edition \textup{(Sigma series in pure mathematics, vol. 6)}}
         {Heldermann Verlag, Berlin}{1989}{#1}}
   \ITEE{#3}{PErdos,RDMauldin1976}{
      \BIB{#2}{P. Erd\"{o}s and R.D. Mauldin}
         {The nonexistence of certain invariant measures}
         {\jRN{PAMS}}{59}{1976}{321--322}{#1}}
   \ITEE{#3}{JErnest1976}{
      \BIB{#2}{J. Ernest}
         {Charting the operator terrain}
         {\jRN{MAMS}}{171}{1976}{207 pp}{#1}}
   \ITEE{#3}{RHFox1943}{
      \BIB{#2}{R.H. Fox}
         {On fiber spaces, II}
         {\jRN{BAMS}}{49}{1943}{733--735}{#1}}
   \ITEE{#3}{NAFriedman1970}{
      \BIb{#2}{N.A. Friedman}
         {Introduction to ergodic theory}
         {Van Nostrand Reinhold Company}{1970}{#1}}
   \ITEE{#3}{MFujii,MKajiwara,YKato,FKubo1976}{
      \BIB{#2}{M. Fujii, M. Kajiwara, Y. Kato, F. Kubo}
         {Decompositions of operators in Hilbert spaces}
         {\jRN{MathJap}}{21}{1976}{117--120}{#1}}
   \ITEE{#3}{SGao,ASKechris2003}{
      \BIB{#2}{S. Gao and A.S. Kechris}
         {On the classification of Polish metric spaces up to isometry}
         {\jRN{MAMS}}{161}{2003}{viii+78}{#1}}
   \ITEE{#3}{MIGarrido,FMontalvo1991}{
      \BIB{#2}{M.I. Garrido and F. Montalvo}
         {On some generalizations of the Kakutani-Stone and Stone-Weierstrass theorems}
         {\jRN{ExtrM}}{6}{1991}{156--159}{#1}}
   \ITEE{#3}{LGe,JShen2002}{
      \BIB{#2}{L. Ge and J. Shen}
         {Generator problem for certain property T factors}
         {\jRN{PNAS}}{99}{2002}{565--567}{#1}}
   \ITEE{#3}{IMGelfand,MANaimark1943}{
      \BIB{#2}{I.M. Gelfand and M.A. Naimark}
         {On the embedding of normed rings into the ring of operators in Hilbert space}
         {\jRN{MSb}}{12}{1943}{197--213}{#1}}
   \ITEE{#3}{FGesztesy,MMalamud,MMitrea,SNaboko2009}{
      \BIB{#2}{F. Gesztesy, M. Malamud, M. Mitrea, S. Naboko}
         {Generalized polar decompositions for closed operators in Hilbert spaces and some applications}
         {\jRN{IEOT}}{64}{2009}{83--113}{#1}}
   \ITEE{#3}{LGillman,MJerison1960}{
      \BIb{#2}{L. Gillman and M. Jerison}
         {Rings of continuous functions}
         {New York}{1960}{#1}}
   \ITEE{#3}{JGlimm1960}{
      \BIB{#2}{J. Glimm}
         {A Stone-Weierstrass theorem for $\CCc^*$-algebras}
         {\jRN{AnnM}}{72}{1960}{216--244}{#1}}
   \ITEE{#3}{GGodefroy,NJKalton2003}{
      \BIB{#2}{G. Godefroy and N.J. Kalton}
         {Lipschitz-free Banach spaces}
         {\jRN{SM}}{159}{2003}{121--141}{#1}}
   \ITEE{#3}{ICGohberg,MGKrein1967}{
      \BIB{#2}{I.C. Gohberg and M.G. Krein}
         {On a description of contraction operators similar to unitary ones}
         {\jRN{FunkAnalPril}}{1}{1967}{38--60}{#1}}
   \ITEE{#3}{ELGriffinJr1953}{
      \BIB{#2}{E.L. Griffin Jr.}
         {Some contributions to the theory of rings of operators}
         {\jRN{TAMS}}{75}{1953}{471--504}{#1}}
   \ITEE{#3}{ELGriffinJr1955}{
      \BIB{#2}{E.L. Griffin Jr.}
         {Some contributions to the theory of rings of operators II}
         {\jRN{TAMS}}{79}{1955}{389--400}{#1}}
   \ITEE{#3}{MGromov1981}{
      \BIB{#2}{M. Gromov}
         {Groups of polynomial growth and expanding maps}
         {\jRN{InHauEtSPM}}{53}{1981}{53--73}{#1}}
   \ITEE{#3}{MGromov1999}{
      \BIb{#2}{M. Gromov}
         {Metric Structures for Riemannian and Non-Riemannian Spaces}
         {Progress in Math. \textbf{152}, Birkh\"{a}user}{1999}{#1}}
   \ITEE{#3}{JDeGroot1956}{
      \BIB{#2}{J. de Groot}
         {Non-archimedean metrics in topology}
         {\jRN{PAMS}}{7}{1956}{948--953}{#1}}
   \ITEE{#3}{LCGrove,CTBenson1985}{
      \BIb{#2}{L.C. Grove and C.T. Benson}
         {Finite Reflection Group}
         {2nd ed., Springer-Verlag}{1985}{#1}}
   \ITEE{#3}{VIGurarii1966}{
      \BIB{#2}{V.I. Gurari\v{\i}}{Spaces of universal placement, isotropic spaces and a problem of Mazur 
         on rotations of Banach spaces \textup{(Russian)}}
         {\jRN{SibirMZ}}{7}{1966}{1002--1013}{#1}}
   \ITEE{#3}{DWHadwin1976}{
      \BIB{#2}{D.W. Hadwin}
         {An operator-valued spectrum}
         {\jRN{NAMS}}{23}{1976}{A-163}{#1}}
   \ITEE{#3}{DWHadwin1977}{
      \BIB{#2}{D.W. Hadwin}
         {An operator-valued spectrum}
         {\jRN{IndianaUMJ}}{26}{1977}{329--340}{#1}}
   \ITEE{#3}{HHahn1932}{
      \BIb{#2}{H. Hahn}
         {Reelle Funktionen I}
         {Leipzig}{1932}{#1}}
   \ITEE{#3}{PRHalmos1950}{
      \BIb{#2}{P.R. Halmos}
         {Measure theory}
         {Van Nostrand, New York}{1950}{#1}}
   \ITEE{#3}{PRHalmos1951}{
      \BIb{#2}{P.R. Halmos}
         {Introduction to Hilbert Space and the Theory of Spectral Multiplicity}
         {Chelsea Publishing Company, New York}{1951}{#1}}
   \ITEE{#3}{PRHalmos1956}{
      \BIb{#2}{P.R. Halmos}
         {Lectures on Ergodic Theory}
         {Publ. Math. Soc. Japan, Tokyo}{1956}{#1}}
   \ITEE{#3}{PRHalmos1982}{
      \BIb{#2}{P.R. Halmos}
         {A Hilbert Space Problem Book}
         {Springer-Verlag New York Inc.}{1982}{#1}}
  \ITEE{#3}{PRHalmos,JEMcLaughlin1963}{
      \BIB{#2}{P.R. Halmos and J.E. McLaughlin}
         {Partial isometries}
         {\jRN{PacJM}}{13}{1963}{585--596}{#1}}
   \ITEE{#3}{RWHansell1972}{
      \BIB{#2}{R.W. Hansell}
         {On the nonseparable theory of Borel and Souslin sets}
         {\jRN{BAMS}}{78}{1972}{236--241}{#1}}
   \ITEE{#3}{FHausdorff1930}{
      \BIB{#2}{F. Hausdorff}
         {Erweiterung einer Hom\"{o}omorphie}
         {\jRN{FM}}{16}{1930}{353--360}{#1}}
   \ITEE{#3}{FHausdorff1934}{
      \BIB{#2}{F. Hausdorff}
         {\"{U}ber innere Abbildungen}
         {\jRN{FM}}{23}{1934}{279--291}{#1}}
   \ITEE{#3}{FHausdorff1938}{
      \BIB{#2}{F. Hausdorff}
         {Erweiterung einer stetigen Abbildung}
         {\jRN{FM}}{30}{1938}{40--47}{#1}}
   \ITEE{#3}{DWHenderson1971}{
      \BIB{#2}{D.W. Henderson}
         {Corrections and extensions of two papers about infinite-dimensional manifolds}
         {\jRN{GTopA}}{1}{1971}{321--327}{#1}}
   \ITEE{#3}{DWHenderson1975}{
      \BIB{#2}{D.W. Henderson}
         {$Z$-sets in ANR's}
         {\jRN{TAMS}}{213}{1975}{205--216}{#1}}
   \ITEE{#3}{DWHenderson,RMSchori1970}{
      \BIB{#2}{D.W. Henderson and R.M. Schori}
         {Topological classification of infinite-dimensional manifolds by homotopy type}
         {\jRN{BAMS}}{76}{1970}{121--124}{#1}}
   \ITEE{#3}{DWHenderson,JEWest1970}{
      \BIB{#2}{D.W. Henderson and J.E. West}
         {Triangulated infinite-dimensional manifolds}
         {\jRN{BAMS}}{76}{1970}{655--660}{#1}}
   \ITEE{#3}{BHoffmann1979}{
      \BIB{#2}{B. Hoffmann}
         {A compact contractible topological group is trivial}
         {\jRN{ArchM}}{32}{1979}{585--587}{#1}}
   \ITEE{#3}{DHofmann2002}{
      \BIB{#2}{D. Hofmann}
         {On a generalization of the Stone-Weierstrass theorem}
         {\jRN{ACS}}{10}{2002}{569--592}{#1}}
   \ITEE{#3}{GHognas,AMukherjea1995}{
      \BIb{#2}{G. H\"ogn\"as and A. Mukherjea}
         {Probability Measures on Semigroups. Convolution Products, Random Walks, and Random Matrices}
         {Plenum Press, New York}{1995}{#1}}
   \ITEE{#3}{MRHolmes1992}{
      \BIB{#2}{M.R. Holmes}
         {The universal separable metric space of Urysohn and isometric embeddings thereof in Banach spaces}
         {\jRN{FM}}{140}{1992}{199--223}{#1}}
   \ITEE{#3}{MRHolmes2008}{
      \BIB{#2}{M.R. Holmes}
         {The Urysohn space embeds in Banach spaces in just one way}
         {\jRN{TopA}}{155}{2008}{1479--1482}{#1}}
   \ITEE{#3}{RRHolmes,TYTam1999}{
      \BIB{#2}{R.R. Holmes and T.Y. Tam}
         {Distance to the convex hull of an orbit under the action of a compact group}
         {\jRN{JAusMSA}}{66}{1999}{331--357}{#1}}
   \ITEE{#3}{RHorn,RMathias1990}{
      \BIB{#2}{R. Horn and R. Mathias}
         {Cauchy-Schwartz inequalities associated with positive semidefinite matrices}
         {\jRN{LAA}}{142}{1990}{63--82}{#1}}
   \ITEE{#3}{GEHuhunaisvili1955}{
      \BIB{#2}{G.E. Huhunai\v{s}vili}
         {On a property of Urysohn's universal metric space}
         {\jRN{DANSSSR}}{101}{1955}{607--610 (Russian)}{#1}}
   \ITEE{#3}{JEHumphreys1990}{
      \BIb{#2}{J.E. Humphreys}
         {Reflection Groups and Coxeter Groups}
         {Cambridge University Press}{1990}{#1}}
   \ITEE{#3}{JRIsbell1964}{
      \BIB{#2}{J.R. Isbell}
         {Six theorems about injective metric spaces}
         {\jRN{CMHelv}}{39}{1964}{65--76}{#1}}
   \ITEE{#3}{SIzumino,YKato1985}{
      \BIB{#2}{S. Izumino and Y. Kato}
         {The closure of invertible operators on Hilbert space}
         {\jRN{ActaSM}}{49}{1985}{321--327}{#1}}
   \ITEE{#3}{CJiang2004}{
      \BIB{#2}{C. Jiang}
         {Similarity classification of Cowen-Douglas operators}
         {\jRN{CanadJM}}{56}{2004}{742--775}{#1}}
   \ITEE{#3}{WBJohnson,JLindenstrauss2001}{
      \BiB{#2}{W.B. Johnson and J. Lindenstrauss}{Basic Concepts in the Geometry of Banach Spaces}
         {Chapter 1 in:}{Handbook of the Geometry of Banach Spaces, Vol. 1}
         {W.B. Johnson and J. Lindenstrauss (editors), Elsevier Science B.V., Amsterdam}{2001}{1--84}{#1}}
   \ITEE{#3}{IBJung,JStochel2008}{
      \BIB{#2}{I.B. Jung and J. Stochel}
         {Subnormal operators whose adjoints have rich point spectrum}
         {\jRN{JFA}}{255}{2008}{1797--1816}{#1}}
   \ITEE{#3}{RVKadison,JRRingrose1983}{
      \BIb{#2}{R.V. Kadison and J.R. Ringrose}
         {Fundamentals of the Theory of Operator Algebras. Volume I: Elementary Theory}
         {Academic Press, Inc., New York-London}{1983}{#1}}
   \ITEE{#3}{RVKadison,JRRingrose1986}{
      \BIb{#2}{R.V. Kadison and J.R. Ringrose}
         {Fundamentals of the Theory of Operator Algebras. Volume II: Advanced Theory}
         {Academic Press, Inc., Orlando-London}{1986}{#1}}
   \ITEE{#3}{SKakutani1936}{
      \BIB{#2}{S. Kakutani}
         {\"{U}ber die Metrisation der topologischen Gruppen}
         {\jRN{ProcImpAcadTokyo}}{12}{1936}{82--84}{#1}}
   \ITEE{#3}{SKakutani1938}{
      \BIB{#2}{S. Kakutani}
         {Two fixed-point theorems concerning bicompact convex sets}
         {\jRN{ProcImpAcadTokyo}}{14}{1938}{242--245}{#1}}
   \ITEE{#3}{SKakutani1941}{
      \BIB{#2}{S. Kakutani}
         {Concrete representation of abstract L-spaces}
         {\jRN{AnnM}}{42}{1941}{523--537}{#1}}
   \ITEE{#3}{SKakutani1941a}{
      \BIB{#2}{S. Kakutani}
         {Concrete representation of abstract M-spaces}
         {\jRN{AnnM}}{42}{1941}{994--1024}{#1}}
   \ITEE{#3}{NKalton2007}{
      \BIB{#2}{N. Kalton}
         {Extending Lipschitz maps into $\CCc(K)$-spaces}
         {\jRN{IsraelJM}}{162}{2007}{275--315}{#1}}
   \ITEE{#3}{RKane2001}{
      \BIb{#2}{R. Kane}
         {Reflection Groups and Invariant Theory}
         {Canadian Mathematical Society, Springer}{2001}{#1}}
   \ITEE{#3}{VKannan,SRRaju1980}{
      \BIB{#2}{V. Kannan and S.R. Raju}
         {The nonexistence of invariant universal measures on semigroups}
         {\jRN{PAMS}}{78}{1980}{482--484}{#1}}
   \ITEE{#3}{IKaplansky1951}{
      \BIB{#2}{I. Kaplansky}
         {A theorem on rings of operators}
         {\jRN{PacJM}}{1}{1951}{227--232}{#1}}
   \ITEE{#3}{MKatetov1988}{
      \BiB{#2}{M. Kat\v{e}tov}{On universal metric spaces}{in: Frolik (ed.),}
         {General Topology and its Relations to Modern Analysis and Algebra VI. Proceedings of the Sixth Prague 
         Topological Symposium 1986}{Heldermann Verlag Berlin}{1988}{323--330}{#1}}
   \ITEE{#3}{YKatznelson1960}{
      \BIB{#2}{Y. Katznelson}
         {Sur les alg\'{e}bres dont les \'{e}l\'{e}ments non n\'{e}gatifs admettent des racines carr\'{e}es}
         {\jRN{AnnSciEcNormSupT}}{77}{1960}{167--174}{#1}}
   \ITEE{#3}{OHKeller1931}{
      \BIB{#2}{O.H. Keller}
         {Die Homoiomorphie der kompakten konvexen Mengen in Hilbertschen Raum}
         {\jRN{MAnn}}{105}{1931}{748--758}{#1}}
   \ITEE{#3}{MAKhamsi,WAKirk,CMartinez2000}{
      \BIB{#2}{M.A. Khamsi, W.A. Kirk, C. Martinez}
         {Fixed point and selection theorems in hyperconvex spaces}
         {\jRN{PAMS}}{128}{2000}{3275--3283}{#1}}
   \ITEE{#3}{ABKhararazishvili1998}{
      \BIb{#2}{A.B. Khararazishvili}
         {Transformation groups and invariant measures. Set-theoretic aspects}
         {World Scientific Publishing Co., Inc., River Edge, NJ}{1998}{#1}}
   \ITEE{#3}{YKijima1987}{
      \BIB{#2}{Y. Kijima}
         {Fixed points of nonexpansive self-maps of a compact metric space}
         {\jRN{JMAnApp}}{123}{1987}{114--116}{#1}}
  \ITEE{#3}{JSKim,ChRKim,SGLee1980}{
      \BIB{#2}{J.S. Kim, Ch.R. Kim, S.G. Lee}
         {Reducing operator valued spectra of a Hilbert space operator}
         {\jRN{JKoreanMS}}{17}{1980}{123--129}{#1}}
   \ITEE{#3}{JKindler1995}{
      \BIB{#2}{J. Kindler}
         {Minimax theorems with applications to convex metric spaces}
         {\jRN{CollM}}{68}{1995}{179--186}{#1}}
   \ITEE{#3}{WAKirk1998}{
      \BIB{#2}{W.A. Kirk}
         {Hyperconvexity of $\RRR$-trees}
         {\jRN{FM}}{156}{1998}{67--72}{#1}}
   \ITEE{#3}{VLKleeJr1952}{
      \BIB{#2}{V.L. Klee Jr.}
         {Invariant metrics in groups (solution of a problem of Banach)}
         {\jRN{PAMS}}{3}{1952}{484--487}{#1}}
   \ITEE{#3}{HJKowalsky1957}{
      \BIB{#2}{H.J. Kowalsky}
         {Einbettung metrischer R\"{a}ume}
         {\jRN{ArchM}}{8}{1957}{336--339}{#1}}
   \ITEE{#3}{WKubis,MRubin2010}{
      \BIB{#2}{W. Kubi\'{s} and M. Rubin}
         {Extension and reconstruction theorems for the Urysohn universal metric space}
         {\jRN{CzMJ}}{60}{2010}{1--29}{#1}}
   \ITEE{#3}{KKuratowski1966}{
      \BIb{#2}{K. Kuratowski}
         {Topology. \textup{Vol. I}}
         {\jRN{PWN}}{1966}{#1}}
   \ITEE{#3}{KKuratowski,BKnaster1927}{
      \BIB{#2}{K. Kuratowski and B. Knaster}
         {A connected and connected im kleinen point set which contains no perfect subset}
         {\jRN{BAMS}}{33}{1927}{106--109}{#1}}
   \ITEE{#3}{KKuratowski,AMostowski1976}{
      \BIb{#2}{K. Kuratowski and A. Mostowski}
         {Set Theory with an Introduction to Descriptive Set Theory}
         {\jRN{PWN}}{1976}{#1}}
   \ITEE{#3}{GLewicki1992}{
      \BIB{#2}{G. Lewicki}
         {Bernstein's ``lethargy'' theorem in metrizable topological linear spaces}
         {\jRN{MonatM}}{113}{1992}{213--226}{#1}}
   \ITEE{#3}{ASLewis1996}{
      \BIB{#2}{A.S. Lewis}
         {Group invariance and convex matrix analysis}
         {\jRN{SIAMJMAA}}{17}{1996}{927--949}{#1}}
   \ITEE{#3}{C-KLi,N-KTsing1991}{
      \BIB{#2}{C.-K. Li and N.-K. Tsing}
         {$G$-invariant norms and $G(c)$-radii}
         {\jRN{LAA}}{150}{1991}{179--194}{#1}}
   \ITEE{#3}{AJLazar,JLindenstrauss1971}{
      \BIB{#2}{A.J. Lazar and J. Lindenstrauss}
         {Banach spaces whose duals are $L_1$ spaces and their representing matrices}
         {\jRN{ActaM}}{126}{1971}{165--193}{#1}}
   \ITEE{#3}{EHLieb,MLoss1997}{
      \BIb{#2}{E.H. Lieb and M. Loss}
         {Analysis \textup{(Graduate Studies in Mathematics, vol. 14)}}
         {Amer. Math. Soc., Providence, RI}{1997}{#1}}
   \ITEE{#3}{ALindenbaum1926}{
      \BIB{#2}{A. Lindenbaum}
         {Contributions \`{a} l'\'{e}tude de l'espace m\'{e}trique I}
         {\jRN{FM}}{8}{1926}{209--222}{#1}}
   \ITEE{#3}{DLindenstrauss,LTzafriri1971}{
      \BIB{#2}{D. Lindenstrauss and L. Tzafriri}
         {On the complemented subspaces problem}
         {\jRN{IsraelJM}}{9}{1971}{263--269}{#1}}
   \ITEE{#3}{RILoebl1986}{
      \BIB{#2}{R.I. Loebl}
         {A note on containment of operators}
         {\jRN{BAustrMS}}{33}{1986}{279--291}{#1}}
   \ITEE{#3}{LHLoomis1945}{
      \BIB{#2}{L.H. Loomis}
         {Abstract congruence and the uniqueness of Haar measure}
         {\jRN{AnnM}}{46}{1945}{348--355}{#1}}
   \ITEE{#3}{LHLoomis1949}{
      \BIB{#2}{L.H. Loomis}
         {Haar measure in uniform structures}
         {\jRN{DukeMJ}}{16}{1949}{193--208}{#1}}
   \ITEE{#3}{ERLorch1939}{
      \BIB{#2}{E.R. Lorch}
         {Bicontinuous linear transformation in certain vector spaces}
         {\jRN{BAMS}}{45}{1939}{564--569}{#1}}
   \ITEE{#3}{ATLundell,SWeingram1969}{
      \BIb{#2}{A.T. Lundell and S. Weingram}
         {The topology of CW-complexes}
         {Litton Educ. Publ.}{1969}{#1}}
   \ITEE{#3}{WLusky1976}{
      \BIB{#2}{W. Lusky}
         {The Gurarij spaces are unique}
         {\jRN{ArchM}}{27}{1976}{627--635}{#1}}
   \ITEE{#3}{WLusky1977}{
      \BIB{#2}{W. Lusky}
         {On separable Lindenstrauss spaces}
         {\jRN{JFA}}{26}{1977}{103--120}{#1}}
   \ITEE{#3}{DMaharam1942}{
      \BIB{#2}{D. Maharam}
         {On homogeneous measure algebras}
         {\jRN{PNatlUSA}}{28}{1942}{108--111}{#1}}
   \ITEE{#3}{MMalicki,SSolecki2009}{
      \BIB{#2}{M. Malicki and S. Solecki}
         {Isometry groups of separable metric spaces}
         {\jRN{MProcCambPhS}}{146}{2009}{67--81}{#1}}
   \ITEE{#3}{PMankiewicz1972}{
      \BIB{#2}{P. Mankiewicz}
         {On extension of isometries in normed linear spaces}
         {\jRN{BAPolSSSM}}{20}{1972}{367--371}{#1}}
   \ITEE{#3}{JMartinezMaurica,MTPellon1987}{
      \BIB{#2}{J. Martinez-Maurica and M.T. Pell\'{o}n}
         {Non-archimedean Chebyshev centers}
         {\jRN{IndagMP}}{90}{1987}{417--421}{#1}}
   \ITEE{#3}{KMaurin1980}{
      \BIb{#2}{K. Maurin}
         {Analysis, Part II}
         {D. Reidel, Dordrecht-Boston-London}{1980}{#1}}
   \ITEE{#3}{SMazur,SUlam1932}{
      \BIB{#2}{S. Mazur and S. Ulam}
         {Sur les transformationes isom\'{e}triques d'espaces vectoriels norm\'{e}s}
         {\jRN{CRASParis}}{194}{1932}{946--948}{#1}}
   \ITEE{#3}{SMazurkiewicz1920}{
      \BIB{#2}{S. Mazurkiewicz}
         {Sur les lignes de Jordan}
         {\jRN{FM}}{1}{1920}{166--209}{#1}}
   \ITEE{#3}{SMazurkiewicz,WSierpinski1920}{
      \BIB{#2}{S. Mazurkiewicz and W. Sierpi\'{n}ski}
         {Contributions a la topologie des ensembles denombrables}
         {\jRN{FM}}{1}{1920}{17--27}{#1}}
   \ITEE{#3}{MMbekhta1992}{
      \BIB{#2}{M. Mbekhta}
         {Sur la structure des composantes connexes semi-Fredholm de $B(H)$}
         {\jRN{PAMS}}{116}{1992}{521--524}{#1}}
   \ITEE{#3}{JEMcCarthy1996}{
      \BIB{#2}{J.E. McCarthy}
         {Boundary values and Cowen-Douglas curvature}
         {\jRN{JFA}}{137}{1996}{1--18}{#1}}
   \ITEE{#3}{JMelleray2007}{
      \BIB{#2}{J. Melleray}
         {Computing the complexity of the relation of isometry between separable Banach spaces}
         {\jRN{MLQ}}{53}{2007}{128--131}{#1}}
   \ITEE{#3}{JMelleray2007a}{
      \BIB{#2}{J. Melleray}
         {On the geometry of Urysohn's universal metric space}
         {\jRN{TopA}}{154}{2007}{384--403}{#1}}
   \ITEE{#3}{JMelleray2008}{
      \BIB{#2}{J. Melleray}
         {Some geometric and dynamical properties of the Urysohn space}
         {\jRN{TopA}}{155}{2008}{1531--1560}{#1}}
   \ITEE{#3}{JMelleray,FVPetrov,AMVershik2008}{
      \BIB{#2}{J. Melleray, F.V. Petrov, A.M. Vershik}
         {Linearly rigid metric spaces and the embedding problem}
         {\jRN{FM}}{199}{2008}{177--194}{#1}}
   \ITEE{#3}{EMichael1953}{
      \BIB{#2}{E. Michael}
         {Some extension theorems for continuous functions}
         {\jRN{PacJM}}{3}{1953}{789--806}{#1}}
   \ITEE{#3}{EMichael1954}{
      \BIB{#2}{E. Michael}
         {Local properties of topological spaces}
         {\jRN{DukeMJ}}{21}{1954}{163--171}{#1}}
   \ITEE{#3}{EMichael1956}{
      \BIB{#2}{E. Michael}
         {Selected selection theorems}
         {\jRN{AmMMon}}{58}{1956}{233--238}{#1}}
   \ITEE{#3}{EMichael1956a}{
      \BIB{#2}{E. Michael}
         {Continuous selections. I}
         {\jRN{AnnM}}{63}{1956}{361--382}{#1}}
   \ITEE{#3}{EMichael1956b}{
      \BIB{#2}{E. Michael}
         {Continuous selections. II}
         {\jRN{AnnM}}{64}{1956}{562--580}{#1}}
   \ITEE{#3}{EMichael1959}{
      \BIB{#2}{E. Michael}
         {A theorem on semi-continuous set-valued functions}
         {\jRN{DukeMJ}}{26}{1959}{647--652}{#1}}
   \ITEE{#3}{JVanMill1986}{
      \BIB{#2}{J. van Mill}
         {Another counterexample in ANR theory}
         {\jRN{PAMS}}{97}{1986}{136--138}{#1}}
   \ITEE{#3}{JVanMill2001}{
      \BIb{#2}{J. van Mill}
         {The Infinite-Dimensional Topology of Function Spaces 
         \textup{(North-Holland Mathematical Library, vol. 64)}}
         {Elsevier, Amsterdam}{2001}{#1}}
   \ITEE{#3}{WMlak1991}{
      \BIb{#2}{W. Mlak}
         {Hilbert Spaces and Operator Theory}
         {PWN --- Polish Scientific Publishers and Kluwer Academic Publishers, Warszawa-Dordrecht}{1991}{#1}}
   \ITEE{#3}{JMogilski1979}{
      \BIB{#2}{J. Mogilski}
         {$CE$-decomposition of $l_2$-manifolds}
         {\jRN{BAPolSSSM}}{27}{1979}{309--314}{#1}}
   \ITEE{#3}{RLMoore1916}{
      \BIB{#2}{R.L. Moore}
         {On the foundations of plane analysis situs}
         {\jRN{TAMS}}{17}{1916}{131--164}{#1}}
   \ITEE{#3}{KMorita1955}{
      \BIB{#2}{K. Morita}
         {A condition for the metrizability of topological spaces and for $n$-dimensionality}
         {\jRN{SciRepTokyoA}}{5}{1955}{33--36}{#1}}
   \ITEE{#3}{AMukherjea,NATserpes1976}{
      \BIb{#2}{A. Mukherjea and N.A. Tserpes}
         {Measures on topological semigroups}
         {Springer Lecture Notes in Math. Vol. 547, Berlin}{1976}{#1}}
   \ITEE{#3}{JMycielski1974}{
      \BIB{#2}{J. Mycielski}
         {Remarks on invariant measures in metric spaces}
         {\jRN{CollM}}{32}{1974}{105--112}{#1}}
   \ITEE{#3}{SNNaboko1984}{
      \BIB{#2}{S.N. Naboko}
         {Conditions for similarity to unitary and selfadjoint operators}
         {\jRN{FunkAnalPril}}{18}{1984}{16--27}{#1}}
   \ITEE{#3}{LNachbin1965}{
      \BIb{#2}{L. Nachbin}
         {The Haar Integral}
         {D. Van Nostrand Company, Inc., Princeton-New Jersey-Toronto-New York-London}{1965}{#1}}
   \ITEE{#3}{TDNarang,SKGarg1991}{
      \BIB{#2}{T.D. Narang and S.K. Garg}
         {On the uniqueness of best approximation in non-archimedian spaces}
         {\jRN{PeriodMHung}}{22}{1991}{121--124}{#1}}
   \ITEE{#3}{JVonNeumann1930}{
      \BIB{#2}{J. von Neumann}
         {Zur Algebra der Funktionaloperationen und Theorie der normalen Operatoren}
         {\jRN{MAnn}}{102}{1930}{370--427}{#1}}
   \ITEE{#3}{JVonNeumann1934}{
      \BIB{#2}{J. von Neumann}
         {Zum Haarschen Mass in topologischen Gruppen}
         {\jRN{ComposM}}{1}{1934}{106--114}{#1}}
   \ITEE{#3}{JVonNeumann1937}{
      \BiB{#2}{J. von Neumann}
         {Some matrix-inequalities and metrization of matrix-space}{\jRN{TomskUnivRev}{} \textbf{1} (1937), 286--300; 
         in }{Collected Works}{Pergamon, New York}{1962}{Vol. 4, 205--219}{#1}}
   \ITEE{#3}{JVonNeumann1949}{
      \BIB{#2}{J. von Neumann}
         {On Rings of Operators. Reduction Theory}
         {\jRN{AnnM}}{50}{1949}{401--485}{#1}}
   \ITEE{#3}{ONielson1973}{
      \BIB{#2}{O. Nielson}
         {Borel sets of von Neumann algebras}
         {\jRN{AmJM}}{95}{1973}{145--164}{#1}}
   \ITEE{#3}{pn2002}{\bibITEM{#2}{#1} \mypaplist{pn1}}
   \ITEE{#3}{pn2006a}{\bibITEM{#2}{#1} \mypaplist{pn2}}
   \ITEE{#3}{pn2006b}{\bibITEM{#2}{#1} \mypaplist{pn3}}
   \ITEE{#3}{pn2007}{\bibITEM{#2}{#1} \mypaplist{pn4}}
   \ITEE{#3}{pn2008a}{\bibITEM{#2}{#1} \mypaplist{pn5}}
   \ITEE{#3}{pn2008b}{\bibITEM{#2}{#1} \mypaplist{pn6}}
   \ITEE{#3}{pn2009a}{\bibITEM{#2}{#1} \mypaplist{pn7}}
   \ITEE{#3}{pn2009b}{\bibITEM{#2}{#1} \mypaplist{pn8}}
   \ITEE{#3}{pn2009c}{\bibITEM{#2}{#1} \mypaplist{pn9}}
   \ITEE{#3}{pn2010a}{\bibITEM{#2}{#1} \mypaplist{pn12}}
   \ITEE{#3}{pn2010b}{\bibITEM{#2}{#1} \mypaplist{pn13}}
   \ITEE{#3}{pn2011a}{\bibITEM{#2}{#1} \mypaplist{pn10}}
   \ITEE{#3}{pn2011b}{\bibITEM{#2}{#1} \mypaplist{pn15}}
   \ITEE{#3}{pn2011c}{\bibITEM{#2}{#1} \mypaplist{pn16}}
   \ITEE{#3}{pn2011d}{\bibITEM{#2}{#1} \mypaplist{pn17}}
   \ITEE{#3}{pn2009x}{
      \bibITEM{#2}{#1} \mypaplist{pn11}}
   \ITEE{#3}{pn2010x}{
      \bibITEM{#2}{#1} \mypaplist{pn14}}
   \ITEE{#3}{pnXXXXb}{
      \bibITEM{#2}{#1} \mypaplist{pnX2}}
   \ITEE{#3}{pnXXXXc}{
      \bibITEM{#2}{#1} \mypaplist{pnX3}}
   \ITEE{#3}{pnXXXXd}{
      \bibITEM{#2}{#1} \mypaplist{pnX13}}
   \ITEE{#3}{MNiezgoda1998}{
      \BIB{#2}{M. Niezgoda}
         {Group majorization and Schur type inequalities}
         {\jRN{LAA}}{268}{1998}{9--30}{#1}}
   \ITEE{#3}{MNiezgoda1998a}{
      \BIB{#2}{M. Niezgoda}
         {An analytical characterization of effective and of irreducible groups inducing cone orderings}
         {\jRN{LAA}}{269}{1998}{105--114}{#1}}
   \ITEE{#3}{MNiezgoda,TYTam2001}{
      \BIB{#2}{M. Niezgoda and T.Y. Tam}
         {On norm property of $G(c)$-radii and Eaton triples}
         {\jRN{LAA}}{336}{2001}{119--130}{#1}}
   \ITEE{#3}{APazy1983}{
      \BIb{#2}{A. Pazy}{Semigroups of Linear Operators 
         and Applications to Partial Differential Equations \textup{(Applied Mathematical Sciences, vol. 44)}}
         {Springer-Verlag, New York}{1983}{#1}}
   \ITEE{#3}{APelc1982}{
      \BIB{#2}{A. Pelc}
         {Semiregular invariant measures on abelian groups}
         {\jRN{PAMS}}{86}{1982}{423--426}{#1}}
   \ITEE{#3}{RPenrose1955}{
      \BIB{#2}{R. Penrose}
         {A generalized inverse for matrices}
         {\jRN{ProcCambPhS}}{51}{1955}{406--413}{#1}}
   \ITEE{#3}{VPestov2006}{
      \BIb{#2}{V. Pestov}
         {Dynamics of infinite-dimensional groups. The Ramsey-Dvoretzky-Milman phenomenon}
         {University Lecture Series \textbf{40}, AMS, Providence, RI}{2006}{#1}}
   \ITEE{#3}{VPestov2007}{
      \BiB{#2}{V. Pestov}
         {Forty-plus annotated questions about large topological groups}
         {in:}{Open Problems in Topology II}{Elliot Pearl (editor), Elsevier B.V., Amsterdam}{2007}{439--450}{#1}}
   \ITEE{#3}{PVPetersen1993}{
      \BiB{#2}{P.V. Petersen}
         {Gromov-Hausdorff convergence of metric spaces}{in book:}{Differential Geometry: Riemannian Geometry 
         (Los Angeles, CA, 1990)}{Amer. Math. Soc., Providence, RI}{1993}{489--504}{#1}}
   \ITEE{#3}{DRamachandran,MMisiurewicz1982}{
      \BIB{#2}{D. Ramachandran and M. Misiurewicz}
         {Hopf's theorem on invariant measures for a group of transformations}
         {\jRN{SM}}{74}{1982}{183--189}{#1}}
   \ITEE{#3}{JMRosenblatt1974}{
      \BIB{#2}{J.M. Rosenblatt}
         {Equivalent invariant measures}
         {\jRN{IsraelJM}}{17}{1974}{261--270}{#1}}
   \ITEE{#3}{HLRoyden1963}{
      \BIb{#2}{H.L. Royden}
         {Real Analysis}
         {The Macmillan Co., New York}{1963}{#1}}
   \ITEE{#3}{WRudin1962}{
      \BIb{#2}{W. Rudin}
         {Fourier Analysis on Groups \textup{(Interscience Tracts in Pure and Applied Mathematics, Number 12)}}
         {Interscience Publishers, New York}{1962}{#1}}
   \ITEE{#3}{WRudin1991}{
      \BIb{#2}{W. Rudin}
         {Functional Analysis}
         {McGraw-Hill Science}{1991}{#1}}
   \ITEE{#3}{TSaito1972}{
      \BiB{#2}{T. Sait\^{o}}{Generations of von Neumann algebras}
         {Lecture Notes in Math. vol. 247}{\textup{(}Lecture on Operator Algebras\textup{)}}
         {Springer, Berlin-Heidelberg-New York}{1972}{435--531}{#1}}
   \ITEE{#3}{KSakai,MYaguchi2003}{
      \BIB{#2}{K. Sakai and M. Yaguchi}
         {Characterizing manifolds modeled on certain dense subspaces of non-separable Hilbert spaces}
         {\jRN{TsukubaJM}}{27}{2003}{143--159}{#1}}
   \ITEE{#3}{SSakai1971}{
      \BIb{#2}{S. Sakai}
         {$\CCc^*$-Algebras and $\WWw^*$-Algebras}
         {Springer-Verlag, Berlin-Heidelberg-New York}{1971}{#1}}
   \ITEE{#3}{RSchori1971}{
      \BIB{#2}{R. Schori}
         {Topological stability for infinite-dimensional manifolds}
         {\jRN{ComposM}}{23}{1971}{87--100}{#1}}
   \ITEE{#3}{JTSchwartz1967}{
      \BIb{#2}{J.T. Schwartz}
         {$\WWw^*$-algebras}
         {Gordon and Breach, Science Publishers Inc., New York-London-Paris}{1967}{#1}}
   \ITEE{#3}{ZSemadeni1971}{
      \BIb{#2}{Z. Semadeni}
         {Banach Spaces of Continuous Functions (Vol. I)}
         {\jRN{PWN}}{1971}{#1}}
   \ITEE{#3}{JPSerre1951}{
      \BIB{#2}{J.-P. Serre}
         {Homologie singuli\`{e}re des espaces fibr\'{e}s}
         {\jRN{AnnM}}{54}{1951}{425--505}{#1}}
   \ITEE{#3}{DSherman2007}{
      \BIB{#2}{D. Sherman}
         {On the dimension theory of von Neumann algebras}
         {\jRN{MScand}}{101}{2007}{123--147}{#1}}
   \ITEE{#3}{WSierpinski1928}{
      \BIB{#2}{W. Sierpi\'{n}ski}
         {Sur les projections des ensembles compl\'{e}mentaires aux ensembles \textup{(A)}}
         {\jRN{FM}}{11}{1928}{117--122}{#1}}
   \ITEE{#3}{MSlocinski1980}{
      \BIB{#2}{M. S\l{}oci\'{n}ski}
         {On the Wold-type decomposition of a pair of commuting isometries}
         {\jRN{APM}}{37}{1980}{255--262}{#1}}
   \ITEE{#3}{RCSteinlage1975}{
      \BIB{#2}{R.C. Steinlage}
         {On Haar measure in locally compact $T_2$ spaces}
         {\jRN{AmJM}}{97}{1975}{291--307}{#1}}
   \ITEE{#3}{JStochel,FHSzafraniec1989}{
      \BIB{#2}{J. Stochel and F.H. Szafraniec}
         {On normal extensions of unbounded operators. III. Spectral properties}
         {\jRN{PublRIMSKyoto}}{25}{1989}{105--139}{#1}}
   \ITEE{#3}{JStochel,FHSzafraniec1989a}{
      \BIB{#2}{J. Stochel and F.H. Szafraniec}
         {The normal part of an unbounded operator}
         {\jRN{ProcKonink}}{92}{1989}{495--503}{#1}}
   \ITEE{#3}{AHStone1962}{
      \BIB{#2}{A.H. Stone}
         {Absolute $\FFf_{\sigma}$-spaces}
         {\jRN{PAMS}}{13}{1962}{495--499}{#1}}
   \ITEE{#3}{AHStone1962a}{
      \BIB{#2}{A.H. Stone}
         {Non-separable Borel sets}
         {\jRN{DissM}}{28}{1962}{41 pages}{#1}}
   \ITEE{#3}{AHStone1972}{
      \BIB{#2}{A.H. Stone}
         {Non-separable Borel sets II}
         {\jRN{GTopA}}{2}{1972}{249--270}{#1}}
   \ITEE{#3}{MHStone1937}{
      \BIB{#2}{M.H. Stone}
         {Application of the theory of Boolean rings to general topology}
         {\jRN{TAMS}}{41}{1937}{375--481}{#1}}
   \ITEE{#3}{MHStone1948}{
      \BIB{#2}{M.H. Stone}
         {The generalized Weierstrass approximation theorem}
         {\jRN{MMag}}{21}{1948}{167--184}{#1}}
   \ITEE{#3}{BSz-Nagy1947}{
      \BIB{#2}{B. Sz.-Nagy}
         {On uniformly bounded linear transformations in Hilbert space}
         {\jRN{ActaSM}}{11}{1947}{152--157}{#1}}
   \ITEE{#3}{WTakahashi1970}{
      \BIB{#2}{W. Takahashi}
         {A convexity in metric space and nonexpansive mappings, I}
         {\jRN{KodaiMSemRep}}{22}{1970}{142--149}{#1}}
   \ITEE{#3}{MTakesaki2002}{
      \BIb{#2}{M. Takesaki}
         {Theory of Operator Algebras I \textup{(Encyclopaedia of Mathematical Sciences, Volume 124)}}
         {Springer-Verlag, Berlin-Heidelberg-New York}{2002}{#1}}
   \ITEE{#3}{MTakesaki2003}{
      \BIb{#2}{M. Takesaki}
         {Theory of Operator Algebras II \textup{(Encyclopaedia of Mathematical Sciences, Volume 125)}}
         {Springer-Verlag, Berlin-Heidelberg-New York}{2003}{#1}}
   \ITEE{#3}{MTakesaki2003a}{
      \BIb{#2}{M. Takesaki}
         {Theory of Operator Algebras III \textup{(Encyclopaedia of Mathematical Sciences, Volume 127)}}
         {Springer-Verlag, Berlin-Heidelberg-New York}{2003}{#1}}
   \ITEE{#3}{TYTam1999}{
      \BIB{#2}{T.Y. Tam}
         {An extension of a result of Lewis}
         {\jRN{ELA}}{5}{1999}{1--10}{#1}}
   \ITEE{#3}{TYTam2000}{
      \BIB{#2}{T.Y. Tam}
         {Group majorization, Eaton triples and numerical range}
         {\jRN{LMLA}}{47}{2000}{11--28}{#1}}
   \ITEE{#3}{TYTam2002}{
      \BIB{#2}{T.Y. Tam}
         {Generalized Schur-concave functions and Eaton triples}
         {\jRN{LMLA}}{50}{2002}{113--120}{#1}}
   \ITEE{#3}{TYTam,WCHill2001}{
      \BIB{#2}{T.Y. Tam and W.C. Hill}
         {On $G$-invariant norms}
         {\jRN{LAA}}{331}{2001}{101--112}{#1}}
   \ITEE{#3}{AFTiman,IAVestfrid1983}{
      \BIB{#2}{A.F. Timan and I.A. Vestfrid}
         {Any separable ultrametric space can be isometrically imbedded in $l_2$}
         {\jRN{FAA}}{17}{1983}{70--71}{#1}}
   \ITEE{#3}{JTomiyama1958}{
      \BIB{#2}{J. Tomiyama}
         {Generalized dimension function for $\WWw^*$-algebras of infinite type}
         {\jRN{TohokuMJ} (2)}{10}{1958}{121--129}{#1}}
   \ITEE{#3}{HTorunczyk1970}{
      \BIB{#2}{H. Toru\'{n}czyk}
         {Remarks on Anderson's paper ``On topological infinite deficiency''}
         {\jRN{FM}}{66}{1970}{393--401}{#1}}
   \ITEE{#3}{HTorunczyk1970a}{
      \BIb{#2}{H. Toru\'{n}czyk}
         {$G$-$K$-absorbing and skeletonized sets in metric spaces}
         {Ph.D. thesis, Inst. Math. Polish Acad. Sci., Warszawa}{1970}{#1}}
   \ITEE{#3}{HTorunczyk1972}{
      \BIB{#2}{H. Toru\'{n}czyk}
         {A short proof of Hausdorff's theorem on extending metrics}
         {\jRN{FM}}{77}{1972}{191--193}{#1}}
   \ITEE{#3}{HTorunczyk1974}{
      \BIB{#2}{H. Toru\'{n}czyk}
         {Absolute retracts as factors of normed linear spaces}
         {\jRN{FM}}{86}{1974}{53--67}{#1}}
   \ITEE{#3}{HTorunczyk1975}{
      \BIB{#2}{H. Toru\'{n}czyk}
         {On Cartesian factors and the topological classification of linear metric spaces}
         {\jRN{FM}}{88}{1975}{71--86}{#1}}
   \ITEE{#3}{HTorunczyk1978}{
      \BIB{#2}{H. Toru\'{n}czyk}
         {Concerning locally homotopy negligible sets and characterization of $l_2$-manifolds}
         {\jRN{FM}}{101}{1978}{93--110}{#1}}
   \ITEE{#3}{HTorunczyk1980}{
      \BiB{#2}{H. Toru\'{n}czyk}{Characterization of infinite-dimensional manifolds}{in:}
         {Proceedings of the International Conference on Geometric Topology (Warsaw, 1978)}
         {\jRN{PWN}}{1980}{431--437}{#1}}
   \ITEE{#3}{HTorunczyk1981}{
      \BIB{#2}{H. Toru\'{n}czyk}
         {Characterizing Hilbert space topology}
         {\jRN{FM}}{111}{1981}{247--262}{#1}}
   \ITEE{#3}{HTorunczyk1985}{
      \BIB{#2}{H. Toru\'{n}czyk}
         {A correction of two papers concerning Hilbert manifolds}
         {\jRN{FM}}{125}{1985}{89--93}{#1}}
   \ITEE{#3}{KTsuda1985}{
      \BIB{#2}{K. Tsuda}
         {A note on closed embeddings of finite dimensional metric spaces}
         {\jRN{BLondMS}}{17}{1985}{273--278}{#1}}
   \ITEE{#3}{PSUrysohn1925}{
      \BIB{#2}{P.S. Urysohn}
         {Sur un espace m\'{e}trique universel}
         {\jRN{CRASParis}}{180}{1925}{803--806}{#1}}
   \ITEE{#3}{PSUrysohn1927}{
      \BIB{#2}{P.S. Urysohn}
         {Sur un espace m\'{e}trique universel}
         {\jRN{BullSM}}{51}{1927}{43--64, 74--96}{#1}}
   \ITEE{#3}{VVUspenskij1986}{
      \BIB{#2}{V.V. Uspenskij}
         {A universal topological group with a countable basis}
         {\jRN{FAA}}{20}{1986}{86--87}{#1}}
   \ITEE{#3}{VVUspenskij1990}{
      \BIB{#2}{V.V. Uspenskij}
         {On the group of isometries of the Urysohn universal metric space}
         {\jRN{CMUC}}{31}{1990}{181--182}{#1}}
   \ITEE{#3}{VVUspenskij2004}{
      \BIB{#2}{V.V. Uspenskij}
         {The Urysohn universal metric space is homeomorphic to a Hilbert space}
         {\jRN{TopA}}{139}{2004}{145--149}{#1}}
   \ITEE{#3}{VVUspenskij2008}{
      \BIB{#2}{V.V. Uspenskij}
         {On subgroups of minimal topological groups}
         {\jRN{TopA}}{155}{2008}{1580--1606}{#1}}
   \ITEE{#3}{VSVaradarajan1963}{
      \BIB{#2}{V.S. Varadarajan}
         {Groups of automorphisms of Borel spaces}
         {\jRN{TAMS}}{109}{1963}{191--220}{#1}}
   \ITEE{#3}{AMVershik1998}{
      \BIB{#2}{A.M. Vershik}
         {The universal Urysohn space, Gromov's metric triples, and random metrics on the series of natural numbers}
         {\jRN{UspekhiMN}}{53}{1998}{57--64}{#1} English translation: \jRN{RussMS}{} \textbf{53} (1998), 921--928. 
         Correction: \jRN{UspekhiMN}{} \textbf{56} (2001), p. 207. English translation: \jRN{RussMS}{} \textbf{56} 
         (2001), p. 1015.}
   \ITEE{#3}{AMVershik2002}{
      \BIb{#2}{A.M. Vershik}
         {Random metric spaces and the universal Urysohn space}
         {Fundamental Mathematics Today. 10th anniversary of the Independent Moscow University. MCCME Publ.}{2002}{#1}}
   \ITEE{#3}{NWeaver1999}{
      \BIb{#2}{N. Weaver}
         {Lipschitz Algebras}
         {World Scientific}{1999}{#1}}
   \ITEE{#3}{JWeidmann1980}{
      \BIb{#2}{J. Weidmann}
         {Linear Operators in Hilbert Spaces}
         {(Graduate Texts in Mathematics, vol. 68) Springer-Verlag New York Inc.}{1980}{#1}}
   \ITEE{#3}{JEWest1969}{
      \BIB{#2}{J.E. West}
         {Approximating homotopies by isotopies in Fr\'{e}chet manifolds}
         {\jRN{BAMS}}{75}{1969}{1254--1257}{#1}}
   \ITEE{#3}{JEWest1969a}{
      \BIB{#2}{J.E. West}
         {Fixed-point sets of transformation groups on infinite-product spaces}
         {\jRN{PAMS}}{21}{1969}{575--582}{#1}}
   \ITEE{#3}{JEWest1970}{
      \BIB{#2}{J.E. West}
         {The ambient homeomorphy of infinite-dimensional Hilbert spaces}
         {\jRN{PacJM}}{34}{1970}{257--267}{#1}}
   \ITEE{#3}{JHCWhitehead1949}{
      \BIB{#2}{J.H.C. Whitehead}
         {Combinatorial homotopy I}
         {\jRN{BAMS}}{55}{1949}{213--245}{#1}}
   \ITEE{#3}{GTWhyburn1942}{
      \BIb{#2}{G. T. Whyburn}
         {Analytic Topology}
         {Amer. Math. Soc. Colloquium Publications (vol. XXVIII), New York}{1942}{#1}}
   \ITEE{#3}{WWogen1969}{
      \BIB{#2}{W. Wogen}
         {On generators for von Neumann algebras}
         {\jRN{BAMS}}{75}{1969}{95--99}{#1}}
   \ITEE{#3}{RYTWong1967}{
      \BIB{#2}{R.Y.T. Wong}
         {On homeomorphisms of certain infinite dimensional spaces}
         {\jRN{TAMS}}{128}{1967}{148--154}{#1}}
   \ITEE{#3}{LYang,JZhang1987}{
      \BIB{#2}{L. Yang and J. Zhang}
         {Average distance constants of some compact convex space}
         {\jRN{JChinUST}}{17}{1987}{17--23}{#1}}
   \ITEE{#3}{PZakrzewski1993}{
      \BIB{#2}{P. Zakrzewski}
         {The existence of invariant $\sigma$-finite measures for a group of transformations}
         {\jRN{IsraelJM}}{83}{1993}{275--287}{#1}}
   \ITEE{#3}{PZakrzewski2002}{
      \BIb{#2}{P. Zakrzewski}
         {Measures on Algebraic-Topological Structures, Handbook of Measure Thoery}
         {E. Pap, ed., Elsevier, Amsterdam}{2002, 1091--1130}{#1}}
   \ITEE{#3}{KZhu2000}{
      \BIB{#2}{K. Zhu}
         {Operators in Cowen-Douglas classes}
         {\jRN{IllinoisJM}}{44}{2000}{767--783}{#1}}
   }
\newcommand{\mypaplist}[2][]{
   \ITEE{#2}{pn1}{
      \myBIB{Separate and joint similarity to families of normal operators}
         {\jRN[#1]{SM}}{149}{2002}{39--62}}
   \ITEE{#2}{pn2}{
      \myBIB{Locally arcwise connected metrizable spaces with the fixed point property are complete-metrizable}
         {\jRN[#1]{TopA}}{153}{2006}{1639--1642}}
   \ITEE{#2}{pn3}{
      \myBIB{Invariant measures for equicontinuous semigroups of continuous transformations of a compact Hausdorff space}
         {\jRN[#1]{TopA}}{153}{2006}{3373--3382}}
   \ITEE{#2}{pn4}{
      \myBIB{Approximation of the Hausdorff distance by the distance of continuous surjections}
         {\jRN[#1]{TopA}}{154}{2007}{655--664}}
   \ITEE{#2}{pn5}{
      \myBIB{Generalized Haar integral}
         {\jRN[#1]{TopA}}{155}{2008}{1323--1328}}
   \ITEE{#2}{pn6}{
      \myBIB{Integration and Lipschitz functions}
         {\jRN[#1]{RCMP}}{57}{2008}{391--399}}
   \ITEE{#2}{pn7}{
      \myBIB{Canonical Banach function spaces generated by Urysohn universal spaces. Measures as Lipschitz maps}
         {\jRN[#1]{SM}}{192}{2009}{97--110}}
   \ITEE{#2}{pn8}{
      \myBIB{Urysohn universal spaces as metric groups of exponent $2$}
         {\jRN[#1]{FM}}{204}{2009}{1--6}}
   \ITEE{#2}{pn9}{
      \myBIB{Central subsets of Urysohn universal spaces}
         {\jRN[#1]{CMUC}}{50}{2009}{445--461}}
   \ITEE{#2}{pn10}{
      \myBIB[P. Niemiec and T.Y. Tam]{A representation of $G$-in\-variant norms for Eaton triple}
         {\jRN[#1]{JCA}}{18}{2011}{59--65}}
   \ITEE{#2}{pn11}{
      \myBIB{Functor of extension of contractions on Urysohn universal spaces}
         {\jRN[#1]{ACS}}{}{2009}{\texttt{DOI: 10.1007/s10485-009-9218-z}}}
   \ITEE{#2}{pn12}{
      \myBIB{Ultra-$\mM$-separability}
         {\jRN[#1]{TopA}}{157}{2010}{669--673}}
   \ITEE{#2}{pn13}{
      \myBIB{Functor of extension of $\Lambda$-isometric maps between central subsets 
         of the unbounded Urysohn universal space}{\jRN[#1]{CMUC}}{51}{2010}{541--549}}
   \ITEE{#2}{pn14}{
      \myBIB{Normed topological pseudovector groups}{\jRN[#1]{ACS}}{}{2010}
         {\ITE{\equal{#1}{}}{\texttt{DOI: 10.1007/s10485\-010-9239-7}}{\texttt{DOI: 10.1007/s10485-010-9239-7}}}}
   \ITEE{#2}{pn15}{
      \myBIB{Topological structure of Urysohn universal spaces}
         {\jRN[#1]{TopA}}{158}{2011}{352--359}}
   \ITEE{#2}{pn16}{
      \myBIB{A note on invariant measures}
         {\jRN[#1]{OpusM}}{31}{2011}{425--431}}
   \ITEE{#2}{pn17}{
      \myBIB{Strengthened Stone-Weierstrass type theorem}
         {\jRN[#1]{OpusM}}{31}{2011}{645--650}}
   \ITEE{#2}{pnX2}{
      \myBAPP{Functor of continuation in Hilbert cube and Hilbert space}
         {to appear in \jRN[#1]{FM}}}
   \ITEE{#2}{pnX3}{
      \myBAPP{Norm closures of orbits of bounded operators}
         {to appear.}}
   \ITEE{#2}{pnX6}{
      \myBAPP{Extending maps by injective $\sigma$-$Z$-maps in Hilbert manifolds}
         {to appear in \jRN[#1]{BullPol}}}
   \ITEE{#2}{pnX7}{
      \myBAPP{Spaces of measurable functions}
         {submitted to \jRN[#1]{CollectM}}}
   \ITEE{#2}{pnX8}{
      \myBAPP{Normal systems over ANR's, rigid embeddings and nonseparable absorbing sets}
         {submitted to \jRN[#1]{ActaMSinES}}}
   \ITEE{#2}{pnX9}{
      \myBAPP{Borel structure of the spectrum of a closed operator}
         {submitted to \jRN[#1]{SM}}}
   \ITEE{#2}{pnX10}{
      \myBAPP{Central points and measures and dense subsets of compact metric spaces}
         {submitted to \jRN[#1]{TopMethNA}}}
   \ITEE{#2}{pnX11}{
      \myBAPP{Generalized absolute values and polar decompositions of a bounded operator}
         {submitted to \jRN[#1]{IEOT}.}}
   \ITEE{#2}{pnX12}{
      \myBAPP{Ultrametrics, extending of Lipschitz maps and nonexpansive selections}
         {accepted for publication in \jRN[#1]{HJM}}}
   \ITEE{#2}{pnX13}{
      \myBAPP{A note on ANR's}
         {submitted to \jRN[#1]{TopA}}}
   \ITEE{#2}{pnX14}{
      \myBAPP{Problem with almost everywhere equality}
         {submitted to \jRN[#1]{ArchM}}}
   \ITEE{#2}{pnX15}{
      \myBAPP{Universal valued Abelian groups}
         {submitted to \jRN[#1]{LNM}}}
   \ITEE{#2}{pnX16}{
      \myBAPP{Unitary equivalence and decompositions of finite systems of closed densely defined operators 
         in Hilbert spaces}{submitted to \jRN[#1]{DissM}}}
   }
\begin{document}

\title[Nonseparable absorbing sets]{Normal systems over ANR's,\\rigid embeddings and\\nonseparable absorbing sets}
\myData
\begin{abstract}
Most of results of Bestvina and Mogilski [\textit{Characterizing certain incomplete infinite-dimensional absolute
retracts}, Michigan Math. J. \textbf{33} (1986), 291--313] on strong $Z$-sets in ANR's and absorbing sets is generalized
to nonseparable case. It is shown that if an ANR $X$ is locally homotopy dense embeddable in infinite-dimensional Hilbert
manifolds and $w(U) = w(X)$ (where `$w$' is the topological weight) for each open nonempty subset $U$ of $X$,
then $X$ itself is homotopy dense embeddable in a Hilbert manifold. It is also demonstrated that whenever $X$ is an AR,
its weak product $W(X,*) = \{(x_n)_{n=1}^{\infty} \in X^{\omega}\dd\ x_n = * \textup{ for almost all } n\}$
is homeomorphic to a pre-Hilbert space $E$ with $E \cong \Sigma E$. An intrinsic characterization of manifolds modelled
on such pre-Hilbert spaces is given.\\
\textit{2000 MSC: 54C55, 57N20.}\\
Key words: absolute neighbourhood retracts, nonseparable absorbing sets, infinite-dimensional manifolds, strong $Z$-sets,
strong discrete approximation property, limitation topology, embeddings into normed spaces.
\end{abstract}
\maketitle


In \cite{henderson} Henderson has shown that a $Z$-set in a paracompact manifold $M$ modelled on a metrizable
locally convex topological vector space $F$ such that $F^{\omega} \cong F$ is a strong $Z$-set in $M$. This result
was used by Chapman \cite{chapman} to generalize the results of Anderson and McCharen \cite{and-mcch} on extending
homeomorphisms between $Z$-sets of a manifold modelled on an infinite-dimensional Fr\'{e}chet space. The homeomorphism
extension theorem was applied in Toru\'{n}czyk's original proof \cite{tor1,tor2} that every Fr\'{e}chet space
is homeomorphic to a Hilbert space. In his proof also strong $Z$-sets play important role. In the meantime it turned
out that these sets are more applicable in the theory of incomplete ANR's than $Z$-sets. With use of strong
$Z$-sets several infinite-dimensional AR's have been characterized, see e.g. \cite{b-m}, \cite{brz}, \cite{bbmw},
\cite{dij1,dij2}, \cite{s-y}. Strong $Z$-sets are therefore an important tool in studying ANR's. We present here several
theorems on strong $Z$-sets in (nonseparable) ANR's which, in particular, generalize the results of Henderson
\cite{henderson} and Bestvina and Mogilski \cite{b-m} and we use them to generalize most important facts on absorbing sets
due to the latter authors. In their exposition and in \cite{brz} the second axiom of countability plays an important
role and one may suggest that it is the point. In case of nonseparable ANR's one has to use different methods
to prove that, e.g., being a strong $Z$-set is a local property. We show this by means of so-called \textit{normal
systems}, which turn out to sum up common features of the notion of a strong $Z$-set and the well-known strong discrete
approximation property (and similar ones characterizing nonseparable Hilbert manifolds \cite{tor1}). With use of normal
systems and the so-called \textit{small maps approximation property} (which discovers the hidden nature of normal systems),
in short: SMAP, we show that if an ANR $X$ is locally homotopy dense embeddable in Hilbert manifolds and all its nonempty
open subsets have the same topological weight, then $X$ itlself is homotopy dense embeddable in a Hilbert manifold as well.
SMAP for normal systems also enables us to shorten Toru\'{n}czyk's original proof of the Hilbert space manifold
characterization theorem, namely: Toru\'{n}czyk \cite[Proof of~3.2, p.~256]{tor1} in the final part of the proof
of the theorem characterizing separable Hilbert manifolds among complete ANR's by means of the strong discrete
approximation property (briefly, SDAP; cf. \cite{acm}) argued that if a separable complete ANR $X$ has SDAP, then $X$ is
locally a Hilbert manifold. However, he gave no explanation why SDAP is open hereditary, that is, if $X$ has SDAP,
then all its open subsets also have SDAP (note that the limitation topology of $\CCc(D,U)$ does not coincide
with the topology of a subspace induced by the limitation topology of $\CCc(D,X)$ if $U$ is open in $X$). We shall easily
see, thanks to SMAP for normal systems, that Toru\'{n}czyk's condition \cite[($*$2), p.~253]{tor1} is implied by SDAP.\par
The problem of investigation of nonseparable absorbing spaces was mentioned in the seminal paper \cite{d-mo} that greatly
stimulated the development of the classical (separable) theory of absorbing spaces. So, in a sense, the recent paper
resolves an old problem posed in the known list of problems \cite{d-mo}. Partial results in this direction were also
obtained in 2003 by Sakai and Yaguchi \cite{s-y}.\par
Other topic, discussed in the paper, is related to the problem of classification of the weak products of AR's
(or, equivalently, absorbing sets for topological, closed hereditary, additive classes $\CcC$ such that
$C_1 \times C_2 \in \CcC$ for all $C_1$, $C_2 \in \CcC$). We introduce \textit{rigid} embeddings into normed spaces
and by means of them we prove the main result of the paper which is new even in separable case and states that the weak
product (defined in Abstract) of an arbitrary AR is homeomorphic to a pre-Hilbert space $E$ such that $E \cong \Sigma E$.
This show that Corollary~5.4 of \cite{b-m} which naturally generalizes Toru\'{n}czyk's Factor Theorem \cite{tor4,tor3}
(cf. \cite{d-m}) is in fact equivalent to it. Finally, we give an intrinsic characterization of all nonzero pre-Hilbert
spaces $E$ with $E \cong \Sigma E$: a metrizable space $X$ is homeomorphic to such a space iff $X$ is an AR
and a $\sigma$-$Z$-space such that for each $Z$-set $K$ in $X$ the natural projection $(X \setminus K) \times X \to X
\setminus K$ is a near-homeomorphism (i.e. it is approximable, in the limitation topology, by homeomorphisms).

\SECT{Preliminaries}

In this paper $\NNN$, $I$ and $Q$ denote the set of all nonnegative integers, the unit interval $[0,1]$ and the Hilbert
cube (i.e. $Q \cong I^{\omega}$), respectively. The letters $X$, $Y$, $Z$, $K$, etc. stand for topological spaces.
Following Banakh and Zarichnyy \cite{b-z}, we identify cardinals with the sets of ordinals of smaller size and endow them
with the discrete topologies. By an \textit{ANR} we mean a metrizable space which is an absolute neighbourhood retract
for metrizable spaces. Compact and paracompact spaces are meant to be Hausdorff, in the opposite to normal spaces which
are understood by us as having the property of separating closed disjoint sets. We write $Y \cong Z$ iff $Y$ and $Z$
are homeomorphic. $X^{\omega}$ stands for the countable infinite Cartesian power of $X$, equipped with the Tichonov
topology, and $\cov(X)$ is used to denote the collection of all open covers of $X$. By a \textit{map} we mean a continuous
function. Whenever $g$ is a map, $\im g$ and $\overline{\im}\,g$ stand for, respectively, the image of $g$ and its
closure. If $A$ is a subset of $X$, $\intt A$ and $\bar{A}$ denote the interior and the closure of $A$ in the whole space
$X$. We use $w(X)$ to denote the topological weight of $X$.\par
If $Y$ is paracompact, the space $\CCc(X,Y)$ of all maps of $X$ into $Y$ in this paper is always equipped with
the \textit{limitation topology}. For definition and basic properties of this topology the Reader is referred
to \cite{tor1}, \cite{bowers}. The symbol $B(f,\UUu)$ (with $f \in \CCc(X,Y)$ and $\UUu \in \cov(Y)$) has the same
meaning as in \cite{tor1} and $B(f,\UUu)$ consists of all maps of $X$ to $Y$ which are \textit{$\UUu$-close to $f$}.\par
In the sequel we shall make use of the following powerful result.

\begin{lemm}{michael}{Michael \mbox{\cite{michael}}, cf. \mbox{\cite[Proposition~4.1]{b-p}}}
Let $X$ be a paracompact space and $\WwW$ a collection of some subsets of $X$ which satisfies the following three
conditions:
\begin{enumerate}[\upshape(M1)]
\item If $A \in \WwW$ and $U$ is an open subset of $X$ contained in $A$, then $U \in \WwW$.
\item If $U_1$ and $U_2$ are open subsets of $X$ and $U_1,U_2 \in \WwW$, then $U_1 \cup U_2 \in \WwW$.
\item If $\{U_s\}_{s \in S}$ is a discrete (in $X$) collection of open subsets of $X$ each of which is a member of $\WwW$,
   then $\bigcup_{s \in S} U_s \in \WwW$.
\end{enumerate}
Then, $X \in \WwW$ provided for every point $a$ of $X$ there is $A \in \WwW$ such that $a \in \intt A$.
\end{lemm}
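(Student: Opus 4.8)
The plan is to run the standard local-to-global argument for paracompact spaces; the one point needing care is that \textup{(M1)--(M3)} give only finite unions directly, so some work is needed to combine \emph{countably} many members of $\WwW$. First observe that, by the hypothesis and \textup{(M1)}, the family $\mathcal{O}$ of all open members of $\WwW$ covers $X$: for $a\in X$ take $A\in\WwW$ with $a\in\intt A$; then $\intt A\subseteq A$ is open, so $\intt A\in\WwW$ by \textup{(M1)}, and $a\in\intt A$. Since $X$ is paracompact (hence regular, indeed normal), $\mathcal{O}$ has a $\sigma$-discrete open refinement $\mathcal{R}=\bigcup_{n\in\NNN}\mathcal{R}_n$ with each $\mathcal{R}_n$ discrete in $X$ --- a classical consequence of paracompactness. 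Every member of $\mathcal{R}$ is open and contained in a member of $\mathcal{O}\subseteq\WwW$, hence lies in $\WwW$ by \textup{(M1)}; so each $\mathcal{R}_n$ is a discrete family of open members of $\WwW$ and \textup{(M3)} gives $O_n:=\bigcup\mathcal{R}_n\in\WwW$. As $\mathcal{R}$ refines a cover, $\bigcup_{n}O_n=X$, and replacing $O_n$ by $O_1\cup\cdots\cup O_n$ (still in $\WwW$ by \textup{(M2)}) I may assume $O_1\subseteq O_2\subseteq\cdots$.

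It then remains to deduce $X\in\WwW$ from a countable \emph{increasing} open cover $\{O_n\}$ by members of $\WwW$. For this I would use that paracompact spaces are normal and countably paracompact to produce a \emph{regular exhaustion}: open sets $\varnothing=P_0\subseteq P_1\subseteq P_2\subseteq\cdots$ with $\overline{P_n}\subseteq P_{n+1}$, $P_n\subseteq O_n$, and $\bigcup_n P_n=X$. (Apply Ishikawa's criterion to the decreasing closed sets $X\setminus O_n$: one obtains decreasing open $U_n\supseteq X\setminus O_n$ with $\bigcap_n\overline{U_n}=\varnothing$; after a normality shrinking one may assume $\overline{U_{n+1}}\subseteq U_n$, and then $P_n:=X\setminus\overline{U_n}$ works.) Each $P_n$ is open and contained in $O_n\in\WwW$, so $P_n\in\WwW$ by \textup{(M1)}.

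Now set, for $r\in\{0,1,2\}$, $\mathcal{E}^{(r)}:=\{\,P_m\setminus\overline{P_{m-2}}:m\geq1,\ m\equiv r\pmod 3\,\}$. Each such set is open and contained in $P_m\in\WwW$, hence lies in $\WwW$ by \textup{(M1)}. The heart of the argument is that $\mathcal{E}^{(r)}$ is a \emph{discrete} family: it is locally finite because a point of $P_m$ has neighbourhood $P_m$, which misses $P_l\setminus\overline{P_{l-2}}$ whenever $l-2\geq m$; and the closures of its members are pairwise disjoint, since $\overline{P_m\setminus\overline{P_{m-2}}}\subseteq\overline{P_m}\setminus P_{m-2}\subseteq P_{m+1}\setminus P_{m-2}$, so for $m<m'$ in the same residue class (hence $m'-2\geq m+1$) the $m$-th closure lies in $P_{m'-2}$ while the $m'$-th lies in its complement. (The step from period $2$ to period $3$ is exactly what provides this one-level gap, because closures ``creep up'' by a single level of the exhaustion, so $\mathcal{E}^{(0)}\cup\mathcal{E}^{(1)}$ alone would not be discrete.) By \textup{(M3)}, $U^{(r)}:=\bigcup\mathcal{E}^{(r)}\in\WwW$. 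Finally $U^{(0)}\cup U^{(1)}\cup U^{(2)}=X$: given $x\in X$ let $m_0$ be least with $x\in P_{m_0}$; then $x\notin P_{m_0-1}\supseteq\overline{P_{m_0-2}}$, so $x\in P_{m_0}\setminus\overline{P_{m_0-2}}\in\mathcal{E}^{(m_0\bmod 3)}$. Two applications of \textup{(M2)} give $X\in\WwW$. I expect the main obstacle to be precisely this last passage: everything else is routine, but turning a countable union of members of $\WwW$ into a single member forces the regular-exhaustion construction and the modulo-$3$ bookkeeping above.
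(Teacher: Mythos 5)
The paper itself does not prove this lemma --- it is quoted from Michael \cite{michael} (cf.\ \cite[Proposition~4.1]{b-p}) --- so there is no internal argument to compare against; judged on its own, your proof is correct. The ingredients you invoke are all available under the paper's conventions (paracompact means Hausdorff): every open cover of a paracompact space admits a $\sigma$-discrete open refinement, paracompact spaces are normal and countably paracompact (so Ishikawa's criterion and the normality shrinking you describe do produce a regular exhaustion $P_0\subseteq P_1\subseteq\cdots$ with $\overline{P_n}\subseteq P_{n+1}$, $P_n\subseteq O_n$, $\bigcup_n P_n=X$), and your verification that each $\mathcal{E}^{(r)}$ is discrete is sound: local finiteness follows since a point of $P_m$ has the neighbourhood $P_m$, which misses $P_l\setminus\overline{P_{l-2}}$ for $l\ge m+2$, and the closures are pairwise disjoint because $\overline{P_m\setminus\overline{P_{m-2}}}\subseteq\overline{P_m}\setminus P_{m-2}\subseteq P_{m+1}\setminus P_{m-2}$; your parenthetical remark that a period-$2$ splitting would fail (closures creep up one level of the exhaustion) is also accurate. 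Only two small touch-ups are needed: declare $P_m=\varnothing$ for all $m\le 0$ (you use $P_{m-2}$ for $m=1,2$ and $P_{m_0-1}$ for $m_0=1$, but only set $P_0=\varnothing$), and record in one line the standard fact that a locally finite family of sets with pairwise disjoint closures is discrete, since \textup{(M3)} is stated for discrete families. With those conventions made explicit, the local-to-global scheme --- $\sigma$-discrete refinement, \textup{(M1)}+\textup{(M3)} to get a countable increasing open cover $\{O_n\}\subset\WwW$, then the regular exhaustion and the mod-$3$ decomposition followed by two applications of \textup{(M2)} --- gives a complete, self-contained proof of the quoted result, which is all the paper relies on.
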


For simplicity, every family (of subsets of a given topological space) which satisfies the properties (M1)--(M3)
we call a \textit{Michael collection}.\par
Recall that a space $E$ is said to be a \textit{neighbourhood extensor for a space $X$} iff every map from any closed
subset of $X$ into $E$ is extendable to a map defined on some open subset of $X$ (and with values in $E$).
If $E$ is a neighbourhood extensor for $X$, every open subset of $E$ is a neighbourhood extensor for each closed subset
of $X$, and $X$ is normal provided $E$ is Hausdorff and has more than one point. A space $Y$ is called
\textit{locally equiconnected} (in short: LEC) iff there is an open in $Y \times Y$ neighbourhood $\Omega$ of the diagonal
$\Delta_Y = \{(y,y)\dd\ y \in Y\}$ and a map $\lambda\dd \Omega \times I \to Y$ such that $\lambda(y,y,t) = y$,
$\lambda(x,y,0) = x$ and $\lambda(x,y,1) = y$ for each $(x,y) \in \Omega$ and $t \in I$. Such a map is called
an \textit{equiconnecting function} (\cite{dug2}). Every ANR is LEC and there are examples of separable completely
metrizable LEC spaces which are not ANR's (see e.g. \cite{cauty}). However, each LEC space is locally contractible
and finite dimensional locally contractible metrizable spaces are ANR's (\cite{dug1}).\par
In the next section we shall need the following two properties of neighbourhood extensors, the proofs of which are left
as exercises.

\begin{lem}{NE-LEC}
Let $X$ and $Y$ be normal spaces such that $Y$ is a neighbourhood extensor for $X$ and let $A$ be a closed subset of $X$.
\begin{enumerate}[\upshape(A)]
\item If $f\dd A \to Y$ is a map such that $\im f \subset V$ where $V$ is an open in $Y$ set contractible in its open
   neighbourhood $U \supset V$, then $f$ is extendable to a map of $X$ into $U$.
\item \textup{(cf. \cite[Lemma~1.3]{tor1})} If, additionally, $Y$ is a paracompact LEC space,
   then the map $\CCc(X,Y) \ni u \mapsto u\bigr|_A \in \CCc(A,Y)$ is open.
\end{enumerate}
\end{lem}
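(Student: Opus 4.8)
\emph{Proposed proof.} The plan is to prove both parts by an explicit gluing construction: in (A) one interpolates with the given contraction of $V$ in $U$, and in (B) with the equiconnecting function of $Y$, in each case switching between the two regimes by a Urysohn function supplied by normality of $X$.

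For (A), first use the hypothesis that $Y$ is a neighbourhood extensor for $X$ to extend $f$ to a map $g\dd W\to Y$ on some open $W\supseteq A$. Since $\im f\subseteq V$ and $g$ extends $f$, the set $g^{-1}(V)$ is an open neighbourhood of $A$ in $X$, so by normality there is an open $W_0$ with $A\subseteq W_0\subseteq\overline{W_0}\subseteq g^{-1}(V)$ together with a map $\phi\dd X\to I$ satisfying $\phi|_A\equiv0$ and $\phi\equiv1$ on $X\setminus W_0$. Fix a homotopy $H\dd V\times I\to U$ with $H(v,0)=v$ and $H(v,1)=u_0$ for all $v\in V$, where $u_0\in U$. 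Define $\tilde f\dd X\to U$ by $\tilde f(x)=H(g(x),\phi(x))$ for $x\in\overline{W_0}$ and $\tilde f(x)=u_0$ for $x\in X\setminus W_0$; the two formulas agree on $\overline{W_0}\setminus W_0$ (there $\phi\equiv1$, so the first also gives $u_0$), so $\tilde f$ is continuous, being continuous on each of the two closed sets $\overline{W_0}$ and $X\setminus W_0$ covering $X$, and $\tilde f|_A=H(f(\cdot),0)=f$. The only point to watch is keeping $\overline{W_0}\subseteq g^{-1}(V)$ so that $H(g(x),\cdot)$ is defined on all of $\overline{W_0}$.

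For (B), I would first reduce openness of the restriction map $r\dd u\mapsto u|_A$ to a single extension-with-control statement. Since the sets $B(f,\UuU)$, $\UuU\in\cov(Y)$, form a basis of neighbourhoods of $f$ in the limitation topology (see \cite{tor1}, \cite{bowers}), it suffices to prove that for every $f\in\CCc(X,Y)$ and every $\UuU\in\cov(Y)$ there is $\VvV\in\cov(Y)$ such that each $h\in\CCc(A,Y)$ which is $\UuU$-close... more precisely $\VvV$-close to $f|_A$ extends to some $g\in\CCc(X,Y)$ which is $\UuU$-close to $f$; this gives $B(f|_A,\VvV)\subseteq r(B(f,\UuU))$, exactly as in the argument of \cite[Lemma~1.3]{tor1}. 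The key auxiliary step, using the equiconnecting function $\lambda\dd\Omega\times I\to Y$ and compactness of $I$, is a tube-lemma refinement: for each $y\in Y$ choose $U\in\UuU$ with $y\in U$; as $\lambda^{-1}(U)$ is open in $Y\times Y\times I$ and contains $\{(y,y)\}\times I$, there is an open $N_y\ni y$ with $N_y\times N_y\times I\subseteq\lambda^{-1}(U)$, hence $N_y\times N_y\subseteq\Omega$ and $\lambda(N_y\times N_y\times I)\subseteq U$. Then $\WwW=\{N_y\dd\ y\in Y\}$ is an open cover of $Y$, and one sets $\VvV=\WwW$. Given $h$ that is $\WwW$-close to $f|_A$, extend it (neighbourhood extensor property) to $\hat h\dd W\to Y$ with $W\supseteq A$ open; for $a\in A$ pick $W_a\in\WwW$ with $f(a),h(a)\in W_a$ and set $G_a=W\cap f^{-1}(W_a)\cap\hat h^{-1}(W_a)$, an open neighbourhood of $a$ on which $(f(x),\hat h(x))\in W_a\times W_a\subseteq\Omega$ and the arc $\{\lambda(f(x),\hat h(x),t)\dd\ t\in I\}$ lies in a fixed $U_{W_a}\in\UuU$. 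With $G=\bigcup_{a\in A}G_a$, choose by normality an open $W''$ with $A\subseteq W''\subseteq\overline{W''}\subseteq G$ and a Urysohn map $\phi\dd X\to I$ with $\phi|_A\equiv1$, $\phi\equiv0$ on $X\setminus W''$, and put $g(x)=\lambda(f(x),\hat h(x),\phi(x))$ on $\overline{W''}$ and $g(x)=f(x)$ on $X\setminus W''$; the formulas agree on $\overline{W''}\setminus W''$ (there $\phi\equiv0$ and $\lambda(f(x),\hat h(x),0)=f(x)$), so $g\dd X\to Y$ is continuous, $g|_A=\lambda(f(\cdot),h(\cdot),1)=h$, and $g$ is $\UuU$-close to $f$ because $g=f$ off $W''$ while for $x\in\overline{W''}\subseteq G$, say $x\in G_a$, both $f(x)$ and $g(x)$ lie in $U_{W_a}$.

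I expect the genuine difficulty to be precisely this closeness control in (B): the extension $\hat h$ is entirely uncontrolled away from $A$, so the $\lambda$-interpolation must be confined to the thin neighbourhood $G$ of $A$, on which continuity of $f$ and $\hat h$ keeps them inside a common member of $\WwW$ — this is what forces both the tube-lemma cover $\WwW$ and the auxiliary neighbourhoods $G_a$. Everything else (the continuity checks via the pasting lemma and the verifications that $\tilde f$, respectively $g$, restricts correctly on $A$) is routine, which is presumably why the authors leave the lemma as an exercise.
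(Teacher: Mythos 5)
Your argument is correct on both counts: part (A) via extending $f$ over a neighbourhood, shrinking with normality and gluing along the contraction of $V$ in $U$, and part (B) via the reduction $B(f\bigr|_A,\VVv)\subset r(B(f,\UUu))$ together with the tube-lemma cover for the equiconnecting function and a Urysohn-controlled interpolation near $A$. The paper itself offers no proof (the lemma is explicitly left as an exercise, with a pointer to Toru\'{n}czyk's Lemma~1.3 for (B)), and your write-up is precisely the intended standard argument, including the one genuinely delicate point — confining the $\lambda$-interpolation to a thin neighbourhood of $A$ on which $f$ and the uncontrolled extension $\hat h$ stay in a common member of the tube-lemma cover.
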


Whenever we talk about the (topological) dimension, we mean the covering one. If $\UUu$ is a family
of subsets of a space $X$, $\ord(\UUu)$ stands for the order of $\UUu$ and it is understood as a natural number
or $\infty$. We say that $X$ is \textit{of finite-dimensional type} (briefly, FDT) iff every open cover of $X$
(not necessarily finite) has a refinement (in $\cov(X)$) of finite order. $X$ is said to be \textit{locally FDT}
if every point of $X$ has a (not necessarily open) neighbourhood which is FDT. Important examples of FDT [locally FDT]
spaces are [locally] compact ones.\par
We denote by $\comp(X)$ the least \textit{infinite} cardinal $\alpha$ such that every open cover of $X$ has
a subcover of cardinality less than $\alpha$. Similarly, $\comp_l(X)$ is the least infinite cardinal $\alpha$
such that every point of $X$ has a (not necessarily open) neighbourhood $F$ such that $\comp(F) \leqsl \alpha$.
(Observe that $\comp(X) = \aleph_0$ [$\comp_l(X) = \aleph_0$] iff $X$ is [locally] compact.) The proofs of the following
results are omitted. (Recall that a \textit{discrete} subset of a topological space is a \textbf{closed} set whose topology
is discrete.)

\begin{lem}{comp}
Let $X$ be a paracompact space such that the set $X$ is infinite.
\begin{enumerate}[\upshape(I)]
\item $\comp(X)$ is the least cardinal $\alpha$ with the following property: for every locally finite open cover
   $\{U_s\}_{s \in S}$ of $X$ consisting of non\-empty sets, $\card S < \alpha$.
\item $\comp(X)$ is the least cardinal $\alpha$ such that every discrete subset of $X$ has cardinality
   less than $\alpha$.
\item If $X$ is metrizable, then either $X$ contains a discrete subset of cardinality $w(X)$ and then $\comp(X)$
   is the direct successor of $w(X)$, or each discrete subset of $X$ is of cardinality less than $w(X)$
   and then $\comp(X) = w(X)$. What is more, in the second case there is a sequence of cardinals $\alpha_0 < \alpha_1
   < \ldots$ such that $w(X) = \sup_{n\in\NNN} \alpha_n$.
\end{enumerate}
\end{lem}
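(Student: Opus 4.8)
The plan is to obtain parts (I) and (II) at one stroke, by sandwiching $\comp(X)$ between the two cardinals those parts describe, and then to read off (III) from (II) together with a couple of elementary facts about metrizable spaces. Write $\beta_1$ for the least cardinal with the property described in (I) and $\beta_2$ for the least cardinal with the property described in (II); each is $\geqsl\aleph_0$, since the infinite $T_1$ space $X$ admits, for every $n$, a finite (hence locally finite) open cover by $n$ distinct nonempty sets (take $\{X\setminus(F\setminus\{p\})\dd\ p\in F\}$ for an $n$-element $F\subseteq X$) and an $n$-element closed discrete subset. The first inequality, $\comp(X)\leqsl\beta_1$, is essentially the only place paracompactness is used directly: given an open cover of $X$, pass to a locally finite open refinement, discard its empty members, and pick for each of the remaining (nonempty) members a member of the original cover that contains it; this is a subcover of cardinality at most that of the refined index set, hence $<\beta_1$, and minimality of $\comp(X)$ among infinite cardinals with this subcover property gives the claim.

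The crux is $\beta_1\leqsl\beta_2$. Let $\{U_s\}_{s\in S}$ be a locally finite open cover of $X$ by nonempty sets; I claim $X$ has a closed discrete subset of cardinality $\card S$ (so that $\card S<\beta_2$). Choose $x_s\in U_s$ for each $s$. Point-finiteness of the cover makes $s\mapsto x_s$ finite-to-one, so $A:=\{x_s\dd\ s\in S\}$ has cardinality $\card S$ when $S$ is infinite (and when $S$ is finite there is nothing to prove, as then $\card S<\aleph_0\leqsl\beta_2$). The set $A$ is closed and discrete in $X$: every point $y$ has a neighbourhood $N$ meeting only finitely many $U_s$, and $x_t\in N$ forces $N\cap U_t\neq\varempty$, so $N\cap A$ is finite; a further shrinking of $N$, legitimate in the $T_1$ space $X$, then separates $y$ from $A$ (if $y\notin A$) or leaves only $x_s$ (if $y=x_s$).

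To close the loop I would verify $\beta_2\leqsl\comp(X)$. For a closed discrete $D\subseteq X$ choose, for each $d\in D$, an open $V_d$ with $V_d\cap D=\{d\}$; in the open cover $\{X\setminus D\}\cup\{V_d\dd\ d\in D\}$ the only member containing a given $d$ is $V_d$, so every subcover contains all the $V_d$ and therefore has cardinality $\geqsl\card D$. Confronting this with a subcover of cardinality $<\comp(X)$ yields $\card D<\comp(X)$, hence $\beta_2\leqsl\comp(X)$. The chain $\comp(X)\leqsl\beta_1\leqsl\beta_2\leqsl\comp(X)$ now collapses, establishing (I) and (II).

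For (III), let $X$ be metrizable and $\kappa:=w(X)$. Any subspace of $X$ with the discrete topology has weight $\leqsl w(X)$, hence cardinality $\leqsl\kappa$, so (II) already gives $\comp(X)=\beta_2\leqsl\kappa^+$. If $X$ has a closed discrete subset of cardinality $\kappa$, then $\beta_2>\kappa$, whence $\beta_2=\kappa^+$. Otherwise every discrete subset of $X$ has cardinality $<\kappa$, so $\beta_2\leqsl\kappa$; to see that equality holds and to produce the required sequence, fix for each $n\in\NNN$ a maximal $\tfrac1{n+1}$-separated subset $D_n$ of $X$ (by \LKZ). Each $D_n$ is closed, discrete, and $\tfrac1{n+1}$-dense, so $\bigcup_n D_n$ is dense and $\kappa=w(X)=d(X)\leqsl\aleph_0\cdot\sup_n\card D_n\leqsl\kappa$; as each $\card D_n<\kappa$, this identity forces $\kappa$ to be the supremum of the (increasing, after passing to a subsequence of the distinct values) sequence $(\card D_n)_n$ — or of the finite cardinals, if $\kappa=\aleph_0$ — and in particular $\kappa\leqsl\beta_2$. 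The step I expect to cost the most care is this last one: one must split into the cases $\kappa=\aleph_0$, $(\card D_n)_n$ bounded strictly below $\kappa$, and $\sup_n\card D_n=\kappa$, and rule out $\beta_2<\kappa$ in each; once the reductions in (I)--(II) to closed discrete sets are in hand, everything else is bookkeeping.
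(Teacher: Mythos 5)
Your proof is correct: the sandwich $\comp(X)\leqsl\beta_1\leqsl\beta_2\leqsl\comp(X)$ is carried out soundly (the passage from a locally finite cover $\{U_s\}_{s\in S}$ to the closed discrete set of chosen points $x_s$, and the cover $\{X\setminus D\}\cup\{V_d\dd d\in D\}$ for a discrete $D$, are exactly what is needed, and you rightly check at the outset that the least cardinals in (I) and (II) are infinite so that they can be compared with $\comp(X)$), while in (III) the maximal $\tfrac1{n+1}$-separated nets $D_n$ together with $w(X)=d(X)\leqsl\aleph_0\cdot\sup_n\card D_n\leqsl w(X)$ correctly force either $w(X)=\aleph_0$ or $\sup_n\card D_n=w(X)$, which yields both $\comp(X)=w(X)$ and the required increasing sequence of cardinals. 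The paper omits the proof of this lemma, and your argument is the natural one it evidently intends, so there is nothing further to compare.
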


\begin{lem}{omega}
If $X$ is metrizable and contains a closed set homeomorphic to $X \times \NNN$, then $X$ has a discrete subset
of cardinality $w(X)$.
\end{lem}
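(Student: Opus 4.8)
The plan is to argue by contradiction, extracting the required discrete set from the ``sheets'' of the closed copy of $X\times\NNN$ with the help of Lemma~\ref{lem:comp}. We may assume $X\neq\varempty$, and if $X$ were finite it could contain no copy of the infinite space $X\times\NNN$; so $X$ is infinite, and Lemma~\ref{lem:comp} applies (metrizable spaces being paracompact). Suppose, towards a contradiction, that $X$ has no discrete subset of cardinality $w(X)$. Then by Lemma~\ref{lem:comp}(III) we are in its second alternative: every discrete subset of $X$ has cardinality $< w(X)$, one has $\comp(X)=w(X)$, and there is a strictly increasing sequence of cardinals $\alpha_0<\alpha_1<\dots$ with $w(X)=\sup_{n\in\NNN}\alpha_n$.

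First I would fix a homeomorphism $h$ of $X\times\NNN$ onto a closed subset $C$ of $X$ and put $X_n=h(X\times\{n\})$. The one structural fact that needs checking is that $\{X_n\}_{n\in\NNN}$ is a \emph{discrete} family of closed subsets of $X$. It is a discrete family of closed sets \emph{in} $C$, since $\{X\times\{n\}\}_{n}$ is such in $X\times\NNN$; and since $C$ is closed in $X$, this passes to $X$: closed-in-$C$ implies closed-in-$X$, a neighbourhood in $C$ witnessing discreteness at a point of $C$ is the trace of a neighbourhood in $X$ that works as well (the sets $X_n$ lying in $C$), and points outside $C$ are separated from all the $X_n$ by $X\setminus C$. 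In particular the $X_n$ are pairwise disjoint, and each is homeomorphic to $X$, so---$\comp$ being a topological invariant---$\comp(X_n)=\comp(X)=w(X)$.

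Then, for each $n$, I would apply Lemma~\ref{lem:comp}(II) to $X_n$: since $\alpha_n<w(X)=\comp(X_n)$ and $\comp(X_n)$ is the least cardinal $\alpha$ with $\card D<\alpha$ for every discrete subset $D$ of $X_n$, there is a discrete subset $D_n\subseteq X_n$ with $\card D_n\geqsl\alpha_n$. Setting $D=\bigcup_{n\in\NNN}D_n$, I would check that $D$ is a discrete subset of $X$: as $\{X_n\}_n$, hence $\{D_n\}_n$, is a discrete family of closed sets, $D$ is closed in $X$; and $D$ is discrete as a subspace, because near a point $p\in D_{n_0}$ one intersects a neighbourhood of $p$ meeting only $X_{n_0}$ (so meeting $D$ within $D_{n_0}$) with a neighbourhood isolating $p$ in the discrete subspace $D_{n_0}$. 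Then $\card D\geqsl\card D_n\geqsl\alpha_n$ for every $n$, so $\card D\geqsl\sup_n\alpha_n=w(X)$; but $D$ being a discrete subset of $X$ forces $\card D<\comp(X)=w(X)$ by Lemma~\ref{lem:comp}(II)---a contradiction. Hence $X$ has a discrete subset of cardinality $w(X)$.

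I expect no serious obstacle: the heart of the argument is the elementary ``packing'' observation above, and the only point deserving care is the routine verification that $D=\bigcup_n D_n$ is genuinely a discrete subset of $X$ in the paper's sense (a closed set carrying the discrete topology), which hinges entirely on $\{X_n\}_n$ being a discrete family---exactly where the hypothesis that the copy of $X\times\NNN$ sits \emph{closedly} in $X$ is used.
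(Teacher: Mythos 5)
Your proof is correct, and it follows what is evidently the intended route: the paper omits the proof of this lemma, but your derivation from Lemma~\textup{\ref{lem:comp}} (using (II) applied to the sheets $X_n\cong X$ of the closed copy of $X\times\NNN$, and the cofinal sequence $\alpha_0<\alpha_1<\dots$ from the second alternative of (III)) is exactly the argument the statement is set up for. The two points needing care --- that $\{X_n\}_n$ is a discrete family of closed sets in $X$ because the copy of $X\times\NNN$ is closed, and that the union $D=\bigcup_n D_n$ is then a closed discrete subspace of cardinality $\geqsl\sup_n\alpha_n=w(X)$ --- are both checked properly.
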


We shall also involve some properties (the same as were used in \cite{tor1}) of simplicial complexes
with Whitehead's weak topologies or the metric ones. By a (combinatorial) simplicial complex $\KKk$ we shall always mean
a complex whose vertices form an orthonormal system in some Hilbert space $\HHh$, and its geometric realization $|\KKk|$
will always be identified with the suitable subset of $\HHh$. If $|\KKk|$ is equipped with the weak topology, we shall
write $|\KKk|_w$. If it is equipped with the metric topology induced from the topology of $\HHh$, we shall write
$|\KKk|_m$. The map $|\KKk|_w \ni x \mapsto x \in |\KKk|_m$ is denoted by $j_{\KKk}$. Adapting Toru\'{n}czyk's
proof of \cite[Lemma~3.4]{tor1} (cf. \cite[Proof of Lemma~3.2]{dowker1}) one may show that

\begin{lem}{complex}
Let $X$ be a normal space, $Y$ be an ANR, $V$ an open subset of $Y$, $f\dd X \to V$ a map and let $\UUu \in \cov(Y)$.
There is a simplicial complex $\KKk$ and maps $v\dd X \to |\KKk|_w$ and $w\dd |\KKk|_m \to Y$ such that $w j_{\KKk} v
\in B(f,\UUu)$ and
\begin{enumerate}[\upshape(SC1)]
\item $\KKk$ is locally finite dimensional,
\item $\KKk$ has less than $\min(\comp(X),\comp(\bar{V}))$ vertices,
\item $\dim \KKk \leqsl \min (\dim(X),\dim(V))$,
\item $\KKk$ is finite dimensional provided $X$ is FDT or $\bar{V}$ is FDT,
\item $\KKk$ is locally finite provided $V$ is separable.
\end{enumerate}
\end{lem}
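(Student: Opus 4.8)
The plan is to recognise this as the relative version of Toru\'{n}czyk's \cite[Lemma~3.4]{tor1} and to run the same nerve construction, exploiting two features of the present setting: the set $V$, being open in the ANR $Y$, is itself an ANR (hence LEC and locally contractible), and $\im f\subset V$, so only arbitrarily fine open covers of $V$ -- not of all of $Y$ -- ever matter. First I would fix $\UUu_0\in\cov(Y)$ that star-refines $\UUu$ and is fine enough that each of its members is contractible inside some member of $\UUu$ (possible by local contractibility of $Y$). Everything then reduces to a good choice of a \emph{single} open cover $\WwW=\{W_i\}_{i\in I}$ of some open $V'\subset V$ with $\im f\subset V'$, refining $\UUu_0$, locally finite, carrying a subordinated partition of unity $\{\varphi_i\}_{i\in I}$ with locally finite supports; the complex is the nerve $\KKk:=N(\WwW)$, realized so that its vertices $\{e_i\}_{i\in I}$ form an orthonormal system in $\HHh:=\ell^2(I)$.

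\emph{Choosing $\WwW$.} The cardinality bound (SC2) comes from $\comp$: by definition one extracts a subcover of $\{f^{-1}(U)\dd U\in\UUu_0\}$ of size $<\comp(X)$ and, since $\overline{\im f}$ is a closed subset of $\bar V$, a subfamily of $\UUu_0$ of size $<\comp(\bar V)$ covering $\overline{\im f}$; intersecting and amalgamating, $\card I<\min(\comp(X),\comp(\bar V))$. The order bound (SC3) is obtained by refining while keeping the index set fixed by amalgamation: covering dimension of $V$ gives order $\le\dim V+1$, and pulling back along $f$, refining in $X$, and transporting the resulting simplicial map $N(f^{-1}\WwW')\to N(\WwW)$ back down forces $\ord(\WwW,y)\le\dim X+1$ for $y\in\im f$, which is all that is used, so $\dim\KKk\le\min(\dim X,\dim V)$. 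Local finiteness of $\WwW$ with locally bounded order -- obtainable because $V'$ is metrizable -- makes $\KKk$ locally finite-dimensional (SC1); if $X$ or $\bar V$ is FDT one may instead take the relevant cover of \emph{globally} finite order, so $\KKk$ is finite-dimensional (SC4); and if $V$ is separable, every open cover of $V$ has a star-finite open refinement, so $\WwW$ may be taken star-finite and $\KKk$ is then locally finite (SC5). This is standard dimension-theoretic bookkeeping as in \cite{tor1} and \cite{dowker1}; the passage to covers of $X$ when $X$ is merely normal is handled exactly as in Dowker's mapping theorems \cite{dowker1}.

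\emph{The two maps.} Put $v(x):=\sum_{i\in I}\varphi_i(f(x))\,e_i$; this is a map $X\to|\KKk|_w$ because the supports of the $\varphi_i$ are locally finite. For $w$, choose $y_i\in W_i$, set $w(e_i):=y_i$ on the $0$-skeleton, and extend simplex by simplex over the skeleta of $|\KKk|_m$: for a closed simplex $\sigma_F$ (so $\bigcap_{i\in F}W_i\ne\varnothing$) the star-refinement property places $\bigcup_{i\in F}W_i$ inside a single $U_F\in\UUu$, and \LEM{NE-LEC}(A), applied with the contractibility of a $\UUu_0$-member inside a $\UUu$-member, extends the already-defined map on $\partial\sigma_F$ -- whose image is small -- to a map $\sigma_F\to U_F$. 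Iterating and amalgamating produces $w\dd|\KKk|_m\to Y$ with $w(\sigma_F)\subset U_F$ for every simplex. This is the delicate point and the \textbf{main obstacle}: on an infinite-dimensional $\KKk$ the skeletal procedure a priori yields a map continuous only on $|\KKk|_w$, and recovering continuity for the metric topology on $|\KKk|_m$ is precisely where local finite-dimensionality (SC1) enters, via the metric-complex argument of \cite[Lemma~3.2]{dowker1} (cf. \cite[Lemma~3.4]{tor1}), with the estimate $w(\sigma_F)\subset U_F$ kept under control at every stage.

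\emph{Closeness and conclusion.} For $x\in X$ let $F(x):=\{i\dd\varphi_i(f(x))>0\}$; then $f(x)\in\bigcap_{i\in F(x)}W_i\ne\varnothing$, so $f(x)$ and $w(j_{\KKk}(v(x)))\in w(\sigma_{F(x)})$ both lie in $U_{F(x)}\in\UUu$. Since $\UUu_0$ star-refines $\UUu$, a routine bookkeeping upgrades this to $w j_{\KKk} v\in B(f,\UUu)$. The remaining verifications -- continuity of $v$, that $w$ is genuinely defined on the orthonormal realization, and that (SC1)--(SC5) follow from the construction of $\WwW$ -- are the exercise-level parts, in the spirit of \cite{tor1}.
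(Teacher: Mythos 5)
Your overall route --- nerve of a fine cover, $v$ via a partition of unity, $w$ built skeleton-wise using the ANR property, continuity on $|\KKk|_m$ recovered from local finite-dimensionality as in \cite{dowker1}, and the star-refinement bookkeeping for $\UUu$-closeness --- is exactly the adaptation of \cite[Lemma~3.4]{tor1} that the paper itself points to (it offers no further proof), so in spirit you are on the intended path. There is, however, a genuine gap in your treatment of (SC3) (and of the ``$X$ is FDT'' half of (SC4)): with $\KKk$ the nerve of a cover $\WwW=\{W_i\}_{i\in I}$ of $V$ and $v(x)=\sum_i\varphi_i(f(x))\,e_i$ for a partition of unity subordinated to $\WwW$, the simplex of $\KKk$ containing $v(x)$ is governed solely by how many members of $\WwW$ contain $f(x)$, i.e.\ by $V$ alone; nothing done on the $X$ side can change this, and the claim that refining the pullback cover in $X$ ``forces $\ord(\WwW,y)\leqsl\dim X+1$ for $y\in\im f$'' is false. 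Concretely, let $X=I$ and let $f$ be a Peano map onto a two-dimensional compactum in $V$: if at each point of $\im f$ at most two of the $\varphi_i$ were positive, the sets $\{\varphi_i>0\}$ would provide arbitrarily fine open covers of order $\leqsl 2$ of that compactum, which is impossible. So with your $v$ the bound $\dim\KKk\leqsl\dim X$ cannot be achieved in general.

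The standard repair --- and what ``adapting'' \cite[Lemma~3.4]{tor1}, cf.\ \cite{dowker1}, actually amounts to --- is to perform the dimension-lowering refinement on the $X$ side. First arrange $\WwW$ as you do (order $\leqsl\dim V+1$, the cardinality reduction, the rank trick for local finite-dimensionality, finite order if $\bar V$ is FDT, star-finiteness if $V$ is separable). Then refine the numerable cover $\{f^{-1}(W_i)\}_{i\in I}$ of the normal space $X$ to a numerable cover of order $\leqsl\dim X+1$ (here the Dowker--Morita theorem for numerable covers is what makes mere normality of $X$ suffice, as you anticipated), and amalgamate so as to keep the index set $I$, obtaining $B_i\subset f^{-1}(W_i)$ with $\ord\{B_i\}\leqsl\dim X+1$. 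Take $\KKk$ to be the nerve of $\{B_i\}$: since $\bigcap_{i\in F}B_i\neq\varempty$ implies $\bigcap_{i\in F}W_i\neq\varempty$, this is a subcomplex of the nerve of $\WwW$, so (SC1), (SC2), (SC4), (SC5) and the $\dim V$ half of (SC3) are inherited from $\WwW$, while $\ord\{B_i\}$ gives the $\dim X$ half; and define $v$ through a partition of unity on $X$ subordinated to $\{B_i\}$ rather than through $\varphi_i\circ f$. With this change the remainder of your outline (choice of $w$ on vertices inside the $W_i$, controlled skeleton-wise extension, continuity on $|\KKk|_m$, and $\UUu$-closeness) goes through as you describe.
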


For more information on simplicial complexes see e.g. \cite{whitehead}, \cite{dowker2,dowker1}, \cite{l-w}
or \cite[II.\S6]{b-p}.

\SECT{Small maps approximation property}

We begin with

\begin{dfn}{small}
For a subset $B$ of a metrizable space $Y$, let $\SsS_Y(X,B)$ be the collection of all maps $g\dd X \to Y$
such that $\overline{\im}\,g \subset B$. Note that if $B$ is open in $Y$, $\SsS_Y(X,B)$ is open in $\CCc(X,Y)$.
Similarly, if $\BBb$ is a family of subsets of $Y$, $\SsS_Y(X,\BBb)$ stands for the union of all $\SsS_Y(X,B)$
with $B \in \BBb$. The members of $\SsS_Y(X,\BBb)$ are said to be \textit{$\BBb$-small (in $Y$) maps}. We write
\textit{$B$-small (in $Y$)} instead of $\{B\}$-small. (It would be more appropriate to say `strongly small'.)
\end{dfn}

\begin{dfn}{smap}
A subset $D$ of $\CCc(X,Y)$ (with paracompact $Y$) is said have \textit{small maps approximation property}
(in short: SMAP) iff there is $\UUu \in \cov(Y)$ such that $\SsS_Y(X,\UUu) \subset \bar{D}$ (the closure taken
in the limitation topology of $\CCc(X,Y)$).
\end{dfn}

We call a class $\TTt$ of topological spaces \textit{closed hereditary} (respectively \textit{open hereditary})
if $A \in \TTt$ for every closed (open) subset $A$ of any member of $\TTt$.\par
Utility of SMAP is explained in the following

\begin{thm}{smap}
Let $Y$ be a paracompact LEC space which is a neighbourhood extensor for a space $X$. Let $\TTt$ be the family
of all closed subsets of $X$. Suppose that $\{D_A\}_{A \in \TTt}$ is a collection such that
\begin{enumerate}[\upshape(D1)]\setcounter{enumi}{-1}
\item For each $A \in \TTt$, $D_A$ is an open subset of $\CCc(A,Y)$.
\item If $B \in \TTt$, $A$ is a closed subset of $B$ and $g \in D_B$, then $g\bigr|_A \in D_A$.
\item If $A \in \TTt$ is the union of its two closed subsets $A_1$ and $A_2$ and $g \in \CCc(A,Y)$ is such that
   $g\bigr|_{A_j} \in D_{A_j}$ for $j=1,2$, then $g \in D_A$.
\item If $A \in \TTt$ is the union of a discrete (in $A$) family $\{A_t\}_{t \in T}$ of its closed subsets
   and $g \in \CCc(A,X)$ is such that $g\bigr|_{A_s} \in D_{A_s}$ for each $s \in T$ and the family
   $\{g(A_t)\}_{t \in T}$ is discrete in $Y$, then $g \in D_A$.
\end{enumerate}
Then \tfcae
\begin{enumerate}[\upshape(i)]
\item each $D_A$ is a dense subset of $\CCc(A,Y)$,
\item $D_X$ has SMAP.
\end{enumerate}
\end{thm}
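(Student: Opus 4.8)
The plan is to dispose of \textup{(i)}$\Rightarrow$\textup{(ii)} trivially and to prove \textup{(ii)}$\Rightarrow$\textup{(i)} by an application of \LEM{michael} to the paracompact space $Y$. For the easy direction: if each $D_A$ is dense, then in particular $\bar{D_X}=\CCc(X,Y)\supset\SsS_Y(X,\UUu)$ for \emph{every} $\UUu\in\cov(Y)$, so $D_X$ has SMAP. Henceforth assume \textup{(ii)} and fix $\UUu_0\in\cov(Y)$ with $\SsS_Y(X,\UUu_0)\subset\bar{D_X}$.

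\emph{First step: promote SMAP to all closed subsets.} I would show there is $\UUu_1\in\cov(Y)$ with $\SsS_Y(A,\UUu_1)\subset\bar{D_A}$ (closure in $\CCc(A,Y)$) for \emph{every} closed $A\subset X$. Since $Y$ is LEC it is locally contractible, so --- using also paracompactness (hence regularity) of $Y$ --- one refines $\UUu_0$ to a cover $\UUu_1$ each member $W$ of which has $\bar W$ contained in an open contractible set whose closure lies in a member of $\UUu_0$. Given closed $A$ and $f\in\SsS_Y(A,\UUu_1)$, \LEM{NE-LEC}(A) extends $f$ to $\tilde f\dd X\to Y$ with $\overline{\im}\,\tilde f$ inside a member of $\UUu_0$, so $\tilde f\in\SsS_Y(X,\UUu_0)\subset\bar{D_X}$; as the restriction map $\CCc(X,Y)\to\CCc(A,Y)$ is continuous (indeed open, by \LEM{NE-LEC}(B)) and carries $D_X$ into $D_A$ by (D1), we get $f=\tilde f|_A\in\bar{D_A}$. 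With (D0) this says: every $\UUu_1$-small map of a closed subset of $X$ into $Y$ is a limit, in the limitation topology, of maps from the corresponding $D$.

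\emph{Second step: the Michael collection.} Let $\WwW$ be the family of open $V\subset Y$ with the following relative approximation property: there is a cover $\VvV_V$ of some neighbourhood of $\bar V$ such that for all closed $F\subset A$ in $X$, all $f\in\CCc(A,Y)$ with $\im f\subset V$, all $\UUu\in\cov(Y)$ whose trace on that neighbourhood refines $\VvV_V$, and all $\phi\in D_F$ that are $\UUu$-close to $f|_F$, there is $g\in D_A$ with $g|_F=\phi$ and $g$ $\UUu$-close to $f$. Taking $F=\varnothing$ shows that $V\in\WwW$ already forces maps of closed subsets of $X$ into $V$ to be $D$-approximable; hence once $Y\in\WwW$, the property with $V=Y$, $F=\varnothing$ and $A$, $\UUu$ arbitrary yields \textup{(i)}. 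So it remains to verify (M1)--(M3) and the local hypothesis of \LEM{michael}. Condition (M1) is formal. For (M2), with $V_1,V_2\in\WwW$, take $\VvV_{V_1\cup V_2}$ finer than both $\VvV_{V_1}$ and $\VvV_{V_2}$, shrink the open cover $\{f^{-1}(V_1),f^{-1}(V_2)\}$ of the normal space $A$ to a closed cover $\{C_1,C_2\}$ with $f(C_i)\subset V_i$, solve the problem first on $C_2$ rel $F\cap C_2$ and then on $C_1$ rel $(F\cap C_1)\cup(C_1\cap C_2)$ --- prescribing on the latter set the map equal to $\phi$ on $F\cap C_1$ and to the already found solution on $C_1\cap C_2$ (they agree on the overlap and together lie in $D$ by (D1) and (D2)) --- and glue by (D2). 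For (M3), with a discrete family $\{V_s\}_{s\in S}\subset\WwW$, the sets $f^{-1}(V_s)$ are pairwise disjoint and clopen in $A$; choosing a discrete open expansion $\{O_s\}$ of $\{\bar V_s\}$ (possible since $Y$, being paracompact, is collectionwise normal) and letting $\VvV_{\bigcup_s V_s}$ be, near each $V_s$, finer than $\VvV_{V_s}$ and fine enough to keep images inside $O_s$, solve the problem on each $f^{-1}(V_s)$ rel $F\cap f^{-1}(V_s)$, let $g$ be the union of the solutions (continuous because the pieces are clopen), and deduce $g\in D_A$ from (D3), the family $\{g(f^{-1}(V_s))\}$ being discrete since it refines $\{O_s\}$. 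The local hypothesis holds because each $y\in Y$ has arbitrarily small contractible open neighbourhoods $V$ with $\bar V$ inside a member of $\UUu_1$; for such $V$ maps into $V$ are $D$-approximable by the first step, and the relative version follows by extending a given $\phi\in D_F$ over $A$ via \LEM{NE-LEC}(A) and then correcting a $D$-approximation in a neighbourhood of $F$ using the equiconnecting function of $Y$ and the openness of restriction maps (\LEM{NE-LEC}(B)).

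\emph{Main obstacle.} I expect the calibration of $\WwW$ to be the hard part. Because (D2) and (D3) glue only maps that \emph{coincide} on overlaps, the defining property of $\WwW$ must be a genuine rel-extension statement rather than a mere approximation statement; and in (M3) one has, in addition, to confine the images of the separate pieces to a discrete family of open sets so as to invoke (D3). These two demands are exactly what force the witnessing cover $\VvV_V$ to depend on $V$ and to be localized near $\bar V$, and arranging that such covers combine correctly under (M1)--(M3) --- in particular that $\VvV_{\bigcup_s V_s}$ can be built from infinitely many $\VvV_{V_s}$ --- is the real technical content. A secondary difficulty, already present at the local step, is that SMAP as hypothesised gives only approximation; upgrading it to approximation agreeing with a prescribed $D$-map on a closed set is precisely where the LEC property of $Y$ and the openness of the restriction maps are used.
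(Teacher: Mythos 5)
Your skeleton (upgrade SMAP to all closed subsets via \LEM{NE-LEC} and (D0)--(D1), then run \LEM{michael} over a family of open subsets of $Y$) is the paper's skeleton, and your easy direction and first step are fine. The genuine gap is in the definition and, above all, the local verification of your Michael collection: you demand an exact rel-$F$ extension ($g\in D_A$ with $g|_F=\phi$ on the nose) and propose to obtain it locally by ``correcting a $D$-approximation near $F$ with the equiconnecting function''. That correction produces a map $g$ which equals $\phi$ on $F$ and equals some approximation $g_0\in D_A$ off a neighbourhood $N$ of $F$, but nothing certifies $g|_N\in D_N$: the only available tool is openness (D0), which certifies membership only for maps close to a map \emph{already known} to lie in $D_N$, and here the only candidate centres are the approximations themselves, whose radius of openness is known only after they are chosen, while the admissible size of the $\lambda$-blend depends on how close the approximation was taken to the extension of $\phi$ --- a circular dependence. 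What this step really needs is a localization axiom of the form ``$\Phi|_F=\phi\in D_F$ implies $\Phi|_K\in D_K$ for some closed neighbourhood $K$ of $F$'', i.e.\ precisely axiom (D4) (strong normality), which is \emph{not} among the hypotheses of \THM{smap}. A secondary flaw: a single cover $\VvV_V$ working for all $f$ and $F$ is not obtainable from \LEM{NE-LEC}(B), since the fineness needed there to extend $\phi$ while staying close to $f$ depends on $f$; your property would have to be re-quantified (``for every $\UUu$ there is $\UUu^*$ \dots''), which then complicates the chaining in (M2)--(M3).

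The paper avoids exact matching altogether: its collection $\WwW$ consists of the open $U\subset Y$ satisfying the plain approximation condition $\SsS_Y(A,U)\subset\bar{D}_A$ for \emph{all} closed $A\subset X$. In (M2) it approximates $f$ on $A_1$ by a member of $D_{A_1}$, extends it to $g_1$ close to $f$ by openness of the restriction map (\LEM{NE-LEC}(B)), then --- using (D0) and continuity of restriction to $A_1\cap A_2$ --- fixes a cover $\GGg_2$ around the \emph{already constructed} map $g_1|_{A_2}$ so that every $\GGg_2$-close map restricts into $D_{A_1\cap A_2}$, approximates $g_1|_{A_2}$ within that margin by $g_2\in D_{A_2}$, and blends $g_1,g_2$ with the equiconnecting function; membership of the blend then follows from this openness estimate together with (D1)--(D2), and no map is ever required to coincide with a prescribed one on an overlap. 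Unless you can either derive your rel-extension property from (D0)--(D3) (I do not see how) or add (D4) to the hypotheses, your local step, and hence (M2) and (M3) as you use them, do not go through, so the proposal as written has a genuine gap.
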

\begin{proof}
We may assume that $Y$ has more than one point. This implies that $X$ is normal. Basicly, we only need to show that (i)
is implied by (ii). Let $\WwW$ be the family of all open subsets $U$ of $Y$ such that $\SsS_Y(A,U) \subset \bar{D}_A$
for each $A \in \TTt$. We shall show that $\WwW$ is a Michael collection.
The point (M1) is clearly fulfilled and (M3) is left as an exercise. We pass to (M2). Let $U_1,U_2 \in \WwW$,
$U = U_1 \cup U_2$, $A \in \TTt$, $f\dd A \to Y$ be $U$-small in $Y$ and let $\VVv \in \cov(Y)$. Take a star refinement
$\GGg \in \cov(Y)$ of $\VVv$. Let $U^*$ be such an open subset of $Y$ that $\overline{\im} f \subset U^*$
and $\overline{U^*} \subset U$. Since the sets $\overline{U^*} \setminus U_1$ and $\overline{U^*} \setminus U_2$
are closed and disjoint, there are two open sets $U_1^*$ and $U_2^*$ for which $\overline{U^*} \setminus U_j
\subset U_j^*\ (j=1,2)$ and
\begin{equation}\label{eqn:aux200}
\overline{U_1^*} \cap \overline{U_2^*} = \varempty.
\end{equation}
Put $B_j = \overline{U^*} \setminus U_j^*\ (j=1,2)$ and note that $B_1$ and $B_2$ are closed subsets of $Y$ such that
$\overline{\im} f \subset B_1 \cup B_2$ and $B_j \subset U_j\ (j=1,2)$. Further, take an open set $U_0$ for which
$B_2 \subset U_0$ and $\overline{U}_0 \subset U_2$. Now put $A_j = f^{-1}(B_j) \in \TTt$, $f_1 = f\bigr|_{A_1}\dd
A_1 \to Y$ and $\GGg_1 = \{G \cap U_0\dd\ G \in \GGg\} \cup \{G \setminus B_2\dd\ G \in \GGg\} \in \cov(Y)$. Observe
that (by \eqref{eqn:aux200})
\begin{equation}\label{eqn:disjoint}
\overline{A \setminus A_1} \cap \overline{A \setminus A_2} = \varempty.
\end{equation}
By \LEM{NE-LEC}--(B), there is $\GGg_1' \in \cov(Y)$ such that
\begin{equation}\label{eqn:open}
B(f_1,\GGg_1') \subset B(f,\GGg_1)\bigr|_{A_1},
\end{equation}
that is, for each $\GGg_1'$-close to $f_1$ map $h_1\dd A_1 \to Y$ there is a $\GGg_1$-close to $f$ map $h\dd A \to Y$
which extends $h_1$. Since $U_1 \in \WwW$ and $\overline{\im} f_1 \subset U_1$, there is $g_1' \in D_{A_1}$ which
is $\GGg_1'$-close to $f_1$. Thanks to \eqref{eqn:open} we may find $g_1 \in \CCc(A,Y)$ which is $\GGg_1$-close to $f$
and extends $g_1$. This yields that
\begin{equation}\label{eqn:g1}
g_1 \in B(f,\GGg_1), \qquad g_1\bigr|_{A_1} \in D_{A_1}.
\end{equation}
Now put $g_2' = g_1\bigr|_{A_2}$. By \eqref{eqn:g1} and (D1), $g_2'\bigr|_{A_1 \cap A_2} \in D_{A_1 \cap A_2}$.
We conclude from (D0) and the continuity of the map $\CCc(A_2,Y) \ni u \mapsto u\bigr|_{A_1 \cap A_2}
\in \CCc(A_1 \cap A_2,Y)$ that there is  a refinement $\GGg_2 \in \cov(Y)$ of $\GGg$ such that
\begin{equation}\label{eqn:G2}
B(g_2',\GGg_2) \subset \{h \in \CCc(A_2,Y)\dd\ h\bigr|_{A_1 \cap A_2} \in D_{A_1 \cap A_2}\}.
\end{equation}
Let $\lambda\dd \Omega \times I \to Y$ be an equiconnecting function (with $\Omega \subset Y \times Y$). Take a cover
$\GGg_2'$ of $Y$ such that for each $G' \in \GGg_2'$,
\begin{equation}\label{eqn:G2'}
G' \times G' \subset \Omega \textup{ and there is $G \in \GGg_2$ for which } \lambda(G' \times G' \times I) \subset G.
\end{equation}
(Notice that this implies that $\GGg_2'$ refines $\GGg_2$.) We infer from \eqref{eqn:g1} and the definition of $\GGg_1$
that $\im g_2' \subset U_0$ and thus $g_2'$ is $U_2$-small in $Y$. Since $U_2$ is a member of $\WwW$, there is
a $\GGg_2'$-close to $g_2'$ map $g_2 \in D_{A_2}$. Now using \eqref{eqn:disjoint} and the assumption that $X$ is normal
take a map $\mu\dd A \to I$ such that
\begin{equation}\label{eqn:mu}
\mu\bigr|_{\overline{A \setminus A_1}} \equiv 1 \quad \textup{and} \quad \mu\bigr|_{\overline{A \setminus A_2}} \equiv 0
\end{equation}
and define $g\dd A \to Y$ as follows: $g\bigr|_{\overline{A \setminus A_1}} = g_2\bigr|_{\overline{A \setminus A_1}}$,
$g\bigr|_{\overline{A \setminus A_2}} = g_1\bigr|_{\overline{A \setminus A_2}}$ and $g(a) = \lambda(g_1(a),g_2(a),\mu(a))$
for $a \in A_1 \cap A_2$. (Note that the last formula makes sense because of \eqref{eqn:G2'} and the fact that $g_2$
is $\GGg_2'$-close to $g_1\bigr|_{A_2}$.) Thanks to \eqref{eqn:mu}, $g$ is a well defined continuous function.
What is more, by (D1) we have
\begin{equation}\label{eqn:A1-A2}
g\bigr|_{\overline{A \setminus A_j}} \in D_{\overline{A \setminus A_j}} \qquad (j=1,2).
\end{equation}
Further, we conclude from \eqref{eqn:G2'} that
\begin{equation}\label{eqn:g2-A2}
g\bigr|_{A_2} \in B(g_2',\GGg_2).
\end{equation}
This, combined with \eqref{eqn:g1}
and the facts that $\GGg_2$ refines $\GGg$ and $\GGg$ is a star refinement of $\VVv$, gives $g \in B(f,\VVv)$.
So, to prove (M2), it suffices to show that $g \in D_A$. But this follows from \eqref{eqn:g2-A2}, \eqref{eqn:G2},
\eqref{eqn:A1-A2} and (D2).\par
We have shown that $\WwW$ is a Michael collection. Therefore, to prove that $D_X$ is dense, it is enough
(thanks to \LEM{michael}) to show that there is $\VVv \in \cov(Y)$ such that $\VVv \subset \WwW$. But LEC spaces
are locally contractible and thus if $\UUu \in \cov(Y)$ is such that
\begin{equation}\label{eqn:smap}
\SsS_Y(X,\UUu) \subset \bar{D}_X,
\end{equation}
then there are $\VVv, \DDd \in \cov(Y)$ such that the family $\{\bar{D}\dd\ D \in \DDd\}$ refines $\UUu$ and each member
of $\VVv$ is contractible in some element of $\DDd$. Now \LEM{NE-LEC}, \eqref{eqn:smap} and (D1) imply that
$\SsS_Y(A,\VVv) \subset \bar{D}_A$ for each $A \in \TTt$, which means that $\VVv \subset \WwW$.\par
Finally, if $A \in \TTt$, then $Y$ is a neighbourhood extensor for $A$ (and $A$ is normal). Hence, by the above
argument, $D_A$ is dense iff it has SMAP. But we have shown that $\VVv \subset \WwW$ for some $\VVv \in \cov(Y)$,
which gives SMAP for every closed subset of $X$.
\end{proof}

\begin{dfn}{normal}
Let $\TTt$ be a closed hereditary class of topological spaces and $Y$ be a paracompact space.
A class $\{D_A\}_{A \in \TTt}$ which satisfies the conditions (D0)--(D3) appearing in the statement of \THM{smap}
is said to be a \textit{normal system over $Y$}. Whenever we deal with normal systems, the underlying class
$\TTt$ is supposed to be closed hereditary.
\end{dfn}

As a simple consequence of \THM{smap} we obtain the following result (we omit its proof).

\begin{pro}{smap2}
Let $X$ and $Y$ be normal spaces such that $Y$ is a hereditary paracompact LEC space and it is a neighbourhood extensor
for $X$. Let $\TTt$ and $\OOo$ be the families of all closed subsets of $X$ and of all open subsets of $Y$,
respectively. Suppose that $\{D_{A,U}\}_{A \in \TTt}^{U \in \OOo}$ is such a collection that for every
$U \in \OOo$ the family $\{D_A = D_{A,U}\}_{A \in \TTt}$ is a normal system over $U$ and
\begin{enumerate}
\item[\upshape(D$*$)] $D_{X,Y} \cap \SsS_Y(X,U) \subset D_{X,U}$.
\end{enumerate}
Then each of the sets $D_{A,U}$ is dense in $\CCc(A,U)$ provided $D_{X,Y}$ has SMAP.
\end{pro}

\begin{rem}{smap}
Under the notation and assumptions of \THM{smap}, the fact that $D_X$ has SMAP is equivalent to the following:
$X$ may be covered by a finite family of closed sets $X_1,\ldots,X_n$ such that $D_{X_j}$ has SMAP for each $j$.
This easily follows from \THM{smap}, (D0)--(D2) and \LEM{NE-LEC}--(B).
\end{rem}

Our next aim is to give equivalent conditions under which every member of a normal system $\{D_A\}_{A \in \TTt}$
over an ANR $Y$ is dense, when $\TTt$ is a rich class of topological spaces (such as compact, metrizable, of weight
no greater than a fixed cardinal, of dimension no greater than a fixed natural number, etc.). This can be done
by a simple adaptation of the concept of Toru\'{n}czyk \cite{tor1}.\par
For a normal system $\{D_A\}_{A \in \TTt}$ over a space $Y$ let us consider the following axioms:
\begin{enumerate}[\upshape(D1)]\setcounter{enumi}{3}
\item If $A \in \TTt$, $B$ is a closed subset of $A$, $f \in \CCc(A,Y)$ and $f\bigr|_B \in D_B$, then there is
   a closed subset $K$ of $A$ such that $f\bigr|_K \in D_K$ and $B \subset \intt_A K$.
\item If $A,B \in \TTt$ and $f \in \CCc(A,B)$, then $D_B f \subset D_A$ (that is, $g \circ f \in D_A$ for each
   $g \in D_B$).
\item If $A, B \in \TTt$ and $A \cong B$, then there is a homeomorphism $h\dd A \to B$ such that $D_B h = D_A$.
\end{enumerate}
Note that (D6) follows from (D5) and it implies that if $A$ and $B$ are two homeomorphic members of $\TTt$, then
$D_A$ is dense [has SMAP] iff $D_B$ is dense [has SMAP]. A normal system satisfying the axiom (D4) is said to be
\textit{strongly normal}. If (D6) [(D5)] is fulfilled, we add the epithet \textit{topological} [\textit{transitive}]
(thus we may talk about topological normal systems, transitive strongly normal systems, etc.). Usually normal systems
are topological.\par
Before we formulate results on topological strongly normal systems, we will establish notation and terminology.\par
Let $\HHh$ be a Hilbert space of dimension $\alpha > 0$ and let $\EEe$ be an orthonormal basis of $\HHh$.
Fix $e \in \EEe$. For a number $n \in \NNN \setminus \{0\}$, let $J_n(\alpha)$ [$K_n(\alpha)$] consists of all nonempty
finite subsets $\sigma$ of $\EEe$ such that $\card (\sigma \cup \{e\}) \leqsl n+1$ [$\card \sigma \leqsl n+1$].
It is clear that $|J_n(\alpha)|_m$ [$|K_n(\alpha)|_m$] is an AR [ANR] of dimension $n$ and of weight
$\max(\alpha,\aleph_0)$ and that every simplicial complex of dimension less than $n$ [no greater than $n$]
which has at most $\alpha-1$ ($=\alpha$ if $\alpha$ is infinite) vertices is isomorphic to a subcomplex of $J_n(\alpha)$
[of $K_n(\alpha)$].\par
Let us agree that $\tau$ is one of the topologies---weak or metric---for simplicial complexes and it is fixed. That is,
whenever in the sequel appears a space of the form $|\KKk|_{\tau}$, where $\KKk$ is a simplicial complex, then $\tau$
always means `$w$' or always means `$m$'. For simplicity, we say that a class $\TTt$ is \textit{corelated} to an ANR $Y$
if every member of $\TTt$ is a normal space for which $Y$ is a neighbourhood extensor and for each $X \in \TTt$ there
is an open cover $\VVv$ of $Y$ (depending on $X$) such that for every $V \in \VVv$:
\begin{enumerate}[\upshape(i)]
\item $\comp(\bar{V}) \leqsl \comp_l(Y)$
\item $\bar{V}$ is FDT provided $Y$ is locally FDT,
\item for any map $f\dd X \to V$ and an open cover $\UUu$ of $Y$ there is a simplicial complex $\KKk$ which witnesses
   \LEM{complex} and such that $\TTt$ contains a space homeomorphic to $|\KKk|_{\tau}$.
\end{enumerate}

From now to the end of the section, we assume that $Y$ is an ANR, $\TTt$ a closed hereditary class corelated to $Y$
and $\Dd = \{D_A\}_{A \in \TTt}$ is a transitive normal system over $Y$. Our purpose is to answer the question of when
\begin{equation*}
\textit{$D_X$ is dense for each space $X \in \TTt$.} \tag{$\star$}
\end{equation*}

The following is left as an exercise.

\begin{lem}{fdc}
\begin{enumerate}[\upshape(A)]
\item \textup{(cf. \cite[Lemma~3.6]{tor1})} Suppose $\Dd$ is strongly normal. Let $K \in \TTt$ be such that
   $K \cong |\KKk|_{\tau}$ where $\KKk$ is a finite dimensional simplicial complex of dimension $n \in \NNN$ which has
   $\alpha > 0$ vertices. If $\TTt$ contains a space $Z \cong I^n \times \alpha$ such that $D_Z$ has SMAP, then $D_K$
   is dense in $\CCc(K,Y)$.
\item \textup{(cf. \cite[part of the proof of Lemma~3.8]{tor1})} If $D_K$ has SMAP for each space $K \in \TTt$
   homeomorphic to a simplicial complex space, then \textup{($\star$)} is fulfilled.
\item \textup{(cf. \cite[proof of Lemma~3.8]{tor1})} Let $K \in \TTt$ be a space homeomorphic to $|\KKk|_{\tau}$
   for some locally finite dimensional simplicial complex $\KKk$ having $\alpha > 0$ vertices. If the class $\TTt$
   contains a space $J(\alpha)$ homeomorphic to $\bigoplus_{n=1}^{\infty} |J_n(\alpha)|_{\tau}$ such that $D_{J(\alpha)}$
   has SMAP, then $D_K$ is dense.
\end{enumerate}
\end{lem}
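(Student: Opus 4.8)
My plan for all three parts is to reduce each to the statement that the relevant $D_K$ \emph{has SMAP} and then invoke \THM{smap}; recall that its proof in fact shows that if $D_X$ has SMAP then $D_A$ has SMAP (hence, by the same theorem, is dense in $\CCc(A,Y)$) for \emph{every} closed $A \subseteq X$. Together with \REM{smap} (SMAP is witnessed by a finite closed cover by SMAP-pieces) and transitivity --- which via (D5) lets one pull any $g \in D_B$ back along an arbitrary map $A \to B$ to an element of $D_A$, and via (D6) transports ``dense'' and ``has SMAP'' across homeomorphisms --- this will let me manufacture the required approximations out of the models $I^n \times \alpha$ in (A) and $\bigoplus_{n=1}^{\infty}|J_n(\alpha)|_{\tau}$ in (C). I shall use freely that, since every member of $\TTt$ is normal with $Y$ a neighbourhood extensor and $\TTt$ is closed hereditary, \THM{smap} applies with its ``$X$'' replaced by any member of $\TTt$ and its ``$\TTt$'' by the family of closed subsets of that member.

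For (B) I would fix $X \in \TTt$ and prove that $D_X$ has SMAP by verifying $\SsS_Y(X,\VVv) \subseteq \bar{D}_X$ for the cover $\VVv$ of $Y$ furnished by the corelation of $\TTt$ to $Y$. So take $f \dd X \to Y$ with $\overline{\im}\,f \subseteq V$, $V \in \VVv$, and $\UUu \in \cov(Y)$; fix a star-refinement $\UUu_0$ of $\UUu$. Property (iii) of corelation gives a simplicial complex $\KKk$ witnessing \LEM{complex} for $f \dd X \to V$ and $\UUu_0$, together with a space $P \in \TTt$ and a homeomorphism $\phi \dd P \to |\KKk|_{\tau}$, so one has maps $v \dd X \to |\KKk|_w$, $w \dd |\KKk|_m \to Y$ with $w j_{\KKk} v \in B(f,\UUu_0)$. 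By the hypothesis of (B), $D_P$ has SMAP, hence (\THM{smap}) is dense in $\CCc(P,Y)$. If $\tau = m$ I would choose $g' \in D_P \cap B(w\phi,\UUu_0)$ and set $g := g' \circ \phi^{-1} \circ j_{\KKk} \circ v$; if $\tau = w$, choose $g' \in D_P \cap B(w j_{\KKk}\phi,\UUu_0)$ and set $g := g' \circ \phi^{-1} \circ v$. In either case $g \in D_X$ by (D5), and combining the $\UUu_0$-closeness of $g'$ with $w j_{\KKk} v \in B(f,\UUu_0)$ yields $g \in B(f,\UUu)$. Hence $f \in \bar{D}_X$; as $X$ was arbitrary, ($\star$) follows.

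For (A) I may assume $K = |\KKk|_{\tau}$. I would first note that $I^n \times \{a\}$ is closed in $Z$, that $I^k \times \alpha \cong (I^k \times \{0\}) \times \alpha$ is closed in $I^n \times \alpha = Z$ for $k \leqsl n$, and that any compact subset of $\RRR^k$ is homeomorphic to a closed subset of $I^k$; so, by the opening remarks, $D_C$ has SMAP (hence is dense) whenever $C$ is homeomorphic to a closed subset of $I^k \times \alpha$ with $k \leqsl n$, and likewise for the discrete space $K^{(0)}$ of vertices. The plan is then to prove, by induction on $j$, that $D_{K^{(j)}}$ is dense, where $K^{(j)} = |\KKk^{(j)}|_{\tau}$ realizes the $j$-skeleton; the case $j = n$ is the claim. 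Given $f \dd K \to Y$ and $\UUu \in \cov(Y)$, after subdividing $\KKk$ (which, for infinite $\alpha$, keeps the vertex count at most $\alpha$ and, as $\dim \KKk < \infty$, does not change the homeomorphism type of $|\KKk|_{\tau}$; finite $\alpha$, where $K$ is compact, is similar) I may assume $f$ carries each closed simplex into a member of a suitable tower of refinements of $\UUu$. The case $j = 0$ is settled above. For the inductive step, given $g_{j-1} \in D_{K^{(j-1)}}$ close to $f$ on $K^{(j-1)}$, I would extend it over the $j$-simplices by \LEM{NE-LEC}(A) (each $f(\bar{\sigma})$ lying in a set contractible in a slightly larger small one) to a continuous $\hat g \dd K^{(j)} \to Y$, close to $f$, with $\hat g|_{K^{(j-1)}} = g_{j-1} \in D_{K^{(j-1)}}$. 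Strong normality (D4) yields a closed regular neighbourhood $\tilde K$ of $K^{(j-1)}$ in $K^{(j)}$ with $\hat g|_{\tilde K} \in D_{\tilde K}$; the complement $F := K^{(j)} \setminus \intt \tilde K$ is then a closed set which is a discrete (in $K^{(j)}$) union $\bigsqcup_{\sigma} E_{\sigma}$ over the $j$-simplices $\sigma$, each $E_{\sigma} \subseteq \bar{\sigma}^{\circ}$ a compact $j$-cell with $\partial E_{\sigma} = E_{\sigma} \cap \partial \tilde K$, so $F$ is homeomorphic to a closed subset of $I^j \times \alpha$ and $D_F$ is dense. It remains to modify $\hat g$ on $F$ --- leaving it unchanged on $\partial \tilde K$ --- to a map $g_j$ with $g_j|_{\tilde K} = \hat g|_{\tilde K}$, $g_j|_F \in D_F$ and $g_j$ close to $f$: here I would use a collar of $\partial \tilde K$ to carry the value $\hat g|_{\partial \tilde K} \in D$ (which by (D5) extends across the collar as the composite with the collar projection) inward and telescope, as in \cite[proof of Lemma~3.6]{tor1}. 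Then $K^{(j)} = \tilde K \cup F$ and (D2) give $g_j \in D_{K^{(j)}}$.

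For (C), the hypothesis makes each $|J_n(\alpha)|_{\tau}$ homeomorphic to a clopen subset of $J(\alpha)$, so $D_{|J_n(\alpha)|_{\tau}}$ has SMAP; since every complex of dimension $< n$ with at most $\alpha$ vertices embeds as a subcomplex of $J_n(\alpha)$ (and subcomplexes realize as closed subsets), I obtain that $D_{|\LLl|_{\tau}}$ has SMAP for every finite-dimensional complex $\LLl$ with at most $\alpha$ vertices and --- packing countably many such complexes into distinct clopen pieces of $J(\alpha)$ --- that $D_P$ has SMAP whenever $P$ is homeomorphic to a countable disjoint union of realizations of such complexes. Assuming $K = |\KKk|_{\tau}$, the remaining task is to decompose $|\KKk|_{\tau}$ into finitely many closed subsets, each a discrete union of realizations of finite-dimensional subcomplexes of a subdivision of $\KKk$ --- a combinatorial layering exploiting the local finite-dimensionality of $\KKk$, in the spirit of \cite[proof of Lemma~3.8]{tor1} --- after which $D_K$ has SMAP by \REM{smap}, hence is dense. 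The routine parts are the reductions, the transitivity transfers and the bookkeeping with covers in the limitation topology; the technical heart --- where Toru\'{n}czyk's arguments in \cite{tor1} must be followed closely, and which I expect to be the main obstacle --- is the relative approximation over the cells $E_{\sigma}$ in (A) (the collar/telescoping step, together with checking that $\{E_{\sigma}\}$ is discrete and that $F$ realizes as a closed subset of $I^j \times \alpha$ in the topology $\tau$) and the combinatorial layering in (C); these are exactly the points the paper leaves as exercises.
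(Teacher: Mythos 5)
Your part (B) is correct and essentially complete (corelation plus \LEM{complex} plus transitivity (D5) plus \THM{smap} is exactly the intended mechanism), and the skeleton-wise induction you set up for (A) and the layering strategy for (C) are indeed the Toru\'{n}czyk-style route the paper points to. The genuine gap is in the inductive step of (A): you insist on producing $g_j$ with $g_j|_{\tilde K}=\hat g|_{\tilde K}$, i.e.\ on approximating $\hat g|_F$ by members of $D_F$ \emph{rel} the boundary. Density of $D_F$ gives no control whatsoever of boundary values, and the collar/telescope sketch does not close this: the map you would piece together consists of $\hat g|_{\partial\tilde K}\circ(\text{collar projection})$ (which by (D1) and (D5) does lie in the $D$-family of the collar) and of some $h\in D_F$ pushed inward, but these two pieces do not agree where they meet, and on the interpolation zone needed to restore continuity you give no reason why the resulting map belongs to the $D$-family of that zone --- none of (D1), (D2), (D5) produces it. As written this step fails, and it is not a routine detail: it is the content of the lemma.

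The repair is simpler than what you attempt and needs no boundary control at all; it is the gluing already used (twice) in the proof of \THM{smap}. Having $\hat g\dd K^{(j)}\to Y$ with $\hat g|_{\tilde K}\in D_{\tilde K}$ (from (D4)), and $F=K^{(j)}\setminus\intt\tilde K$ closed and equal to a discrete union of compact subsets of the open $j$-simplices (hence $F\in\TTt$, closed embeddable in $Z\cong I^n\times\alpha$, so $D_F$ is dense by \THM{smap} and (D6)), use (D0): since $D_{\tilde K}$ is open and restriction to $\tilde K$ is continuous, every map sufficiently close to $\hat g$ still restricts into $D_{\tilde K}$. Then \LEM{NE-LEC}--(B), applied to the closed set $F\subset K^{(j)}$, lets you extend any $h\in D_F$ close enough to $\hat g|_F$ to a map $g_j$ on $K^{(j)}$ as close to $\hat g$ as required; now $g_j|_{\tilde K}\in D_{\tilde K}$, $g_j|_F=h\in D_F$, and (D2) gives $g_j\in D_{K^{(j)}}$. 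The same device replaces your subdivision-plus-\LEM{NE-LEC}--(A) extension step, which is welcome because for $\tau=m$ you would otherwise have to verify that your (generalized, non-uniform) subdivisions do not change the homeomorphism type of the metric realization. Finally, in (C) the ``combinatorial layering'' is named but not constructed; it, together with the verification that each layer embeds as a closed subset of $J(\alpha)$ in the topology $\tau$ (group the pieces of a layer by dimension --- bounded dimension is what guarantees that the metric realization of a disjoint union is the topological sum), is the remaining substance of that part, so as it stands (C), like the gluing in (A), is an appeal to \cite{tor1} rather than a proof.
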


Let $\comp(\TTt \wedge Y) = \sup\{\min(\comp(X),\comp_l(Y))\dd\ X \in \TTt\}$ and $\dim(\TTt \wedge Y)
= \sup\{\min(\dim(X),\dim(Y))\dd\ X \in \TTt\}$. \LEM{fdc} yields the following

\begin{thm}{dense}
In each of the following cases \textup{($\star$)} is fulfilled.
\begin{enumerate}[\upshape(I)]
\item $\TTt$ consists of compact spaces or $Y$ is locally compact, and for each $m \in \NNN \cap
   [0,\dim(\TTt \wedge Y)]$ there is $n \geqsl m$ such that $I^n \in \TTt$ and $D_{I^n}$ has SMAP.
\item $\Dd$ is strongly normal, $\TTt$ consists of FDT spaces or $Y$ is locally FDT and for each $m \in \NNN \cap
   [0,\dim(\TTt \wedge Y)]$ and positive $\alpha < \comp(\TTt \wedge Y)$ there is $n \geqsl m$ such that
   $I^n \times \alpha \in \TTt$ and $D_{I^n \times \alpha}$ has SMAP.
\item $\TTt$ consists of Lindel\"{o}f spaces or $Y$ is locally separable, $J = \bigoplus_{n=1}^{\infty} I^n \in \TTt$
   and $D_J$ has SMAP.
\item For each positive $\alpha < \comp(\TTt \wedge Y)$, $J(\alpha) = \bigoplus_{n=1}^{\infty} |J_n(\alpha)|_{\tau}
   \in \TTt$ and $D_{J(\alpha)}$ has SMAP.
\end{enumerate}
\end{thm}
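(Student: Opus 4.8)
The plan is to derive $(\star)$ in all four cases from \LEM{fdc}, using \THM{smap} to move freely between the statements ``$D_A$ is dense in $\CCc(A,Y)$'' and ``$D_A$ has SMAP'' for $A\in\TTt$ --- this is legitimate since every member of a corelated class is a normal space for which the ANR $Y$ is a paracompact LEC neighbourhood extensor. The backbone is \LEM{fdc}(B): it suffices to establish SMAP of $D_{|\KKk|_{\tau}}$ for the simplicial complexes $\KKk$ furnished by \LEM{complex} (by condition (iii) in the definition of a corelated class these may be taken with $|\KKk|_{\tau}\in\TTt$). For those complexes, the conclusions (SC1)--(SC5) of \LEM{complex} together with conditions (i)--(ii) of corelatedness give: $\KKk$ is locally finite-dimensional, has fewer than $\min(\comp(X),\comp_l(Y))\leqsl\comp(\TTt\wedge Y)$ vertices, $\dim\KKk\leqsl\dim(\TTt\wedge Y)$, and $\KKk$ is finite-dimensional as soon as $\TTt$ consists of FDT spaces or $Y$ is locally FDT, and locally finite as soon as the relevant $\bar V$ is separable. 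Thus in each case the work is to show that $D_{|\KKk|_{\tau}}$ is dense (equivalently, has SMAP) for the appropriate family of complexes, and the four hypotheses are tailored to feed the right part of \LEM{fdc}.

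In case (I) the relevant $\KKk$ is \emph{finite}: if $\TTt$ consists of compact spaces then $|\KKk|_{\tau}$ is compact, forcing $\KKk$ finite; and if $Y$ is locally compact then $\comp_l(Y)=\aleph_0$ and $Y$ is locally FDT, so (SC2) and (SC4) force $\KKk$ finite. A finite complex of dimension $n_0\leqsl\dim(\TTt\wedge Y)$ is a finite union of closed simplices, each homeomorphic to some $I^{j}$ with $j\leqsl n_0$. For such $j$, the hypothesis supplies $n\geqsl j$ with $I^{n}\in\TTt$ and $D_{I^{n}}$ having SMAP; since $I^{j}$ is a retract of $I^{n}$, a short argument from axiom (D5) and continuity of precomposition (lift a $U$-small map $I^{j}\to Y$ to a small map $I^{n}\to Y$, approximate inside $D_{I^{n}}$, restrict the approximants back) shows $D_{I^{j}}$ has SMAP, and $I^{j}\in\TTt$ by closed-hereditariness while (D6) transports SMAP across the homeomorphism onto each closed simplex. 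Then \REM{smap} --- the finite-union criterion for SMAP --- gives that $D_{|\KKk|_{\tau}}$ has SMAP, hence by \THM{smap} is dense. No strong normality is needed, precisely because the cell decomposition is finite.

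Cases (II), (III), (IV) run through \LEM{fdc}(A) or (C). In (II) the bookkeeping forces $\KKk$ finite-dimensional of some dimension $n_0\leqsl\dim(\TTt\wedge Y)$ with $0<\alpha<\comp(\TTt\wedge Y)$ vertices; the retract trick promotes the given $I^{n}\times\alpha$ ($n\geqsl n_0$) to $I^{n_0}\times\alpha\in\TTt$ with SMAP, and since $\Dd$ is strongly normal \LEM{fdc}(A) gives density of $D_{|\KKk|_{\tau}}$. In (IV) the corelated bounds give $0<\alpha<\comp(\TTt\wedge Y)$ vertices and local finite-dimensionality, and $J(\alpha)=\bigoplus_{n\geqsl1}|J_n(\alpha)|_{\tau}\in\TTt$ with $D_{J(\alpha)}$ SMAP is exactly the input \LEM{fdc}(C) consumes. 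Case (III) is the Lindel\"{o}f/separable specialization of (IV): the Lindel\"{o}f restriction on $\TTt$ --- or local separability of $Y$ via (SC5) --- forces $\KKk$ to have at most countably many vertices and, when $Y$ is locally separable, to be locally finite, so only $J(\aleph_0)=\bigoplus_{n\geqsl1}|J_n(\aleph_0)|_{\tau}$ and its closed subcomplexes intervene; after checking $J(\aleph_0)\in\TTt$ one must deduce that $D_{J(\aleph_0)}$ has SMAP from the hypothesis that $D_{J}$ does, where $J=\bigoplus_{n\geqsl1}I^{n}$.

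I expect this last deduction to be the main obstacle. One cannot simply transfer SMAP along a retraction, because the summands $|J_n(\aleph_0)|_{\tau}$ --- already for $n=1$, where $|J_1(\aleph_0)|_{\tau}$ is the countably infinite ``claw'', a cone over a countable discrete set --- are not locally compact, hence neither closed in nor retracts of $J=\bigoplus_{n\geqsl1}I^{n}$. Instead one exploits the self-similarity of $|J_n(\aleph_0)|_{\tau}$: it decomposes into a scaled copy of itself together with a discrete family of cells, and SMAP is propagated by an inductive/limiting use of axioms (D2) and (D3) --- in effect Toru\'{n}czyk's device from the separable part of \cite[Lemma~3.8]{tor1}. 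Everything else --- the $\comp$- and $\dim$-bookkeeping pinning down finiteness or local finiteness of the arising complexes, the routine check that SMAP passes to retracts via (D5), and the finite-union reduction of \REM{smap} --- is mechanical given \LEM{complex}, \LEM{fdc}, \THM{smap} and the definition of a corelated class.
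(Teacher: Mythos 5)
Your treatment of cases (I), (II) and (IV) is sound and follows exactly the route the paper intends (the paper's ``proof'' is just the reduction to \LEM{fdc} via \LEM{complex}, corelatedness, \THM{smap} and \REM{smap}); incidentally, the retract detour in (I) and (II) is not even needed, since \THM{smap} applied to a space $Z\in\TTt$ whose $D_Z$ has SMAP already yields density of $D_B$ for \emph{every} closed subset $B$ of $Z$. The genuine gap is in case (III). You reduce it to showing that SMAP of $D_J$, $J=\bigoplus_{n\geqslant 1}I^n$, forces SMAP of $D_{J(\aleph_0)}$, and you concede this is the hard point; but this reduction is both unsupported and, within the hypotheses of (III), not the right move. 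First, nothing puts $J(\aleph_0)$ (or any homeomorph of it) into $\TTt$: corelatedness only supplies homeomorphs of the complexes produced by \LEM{complex}, and $J(\aleph_0)$ is not locally compact, hence is not a closed subset of $J$, so closed-heredity does not help either. Second, your proposed propagation ``self-similar copy plus a discrete family of cells, then (D2)+(D3)'' cannot run as sketched: (D3) only applies when the images $\{g(A_t)\}_t$ form a discrete family in $Y$, and case (III) gives you no tool (no $\omega$-LFAP, no discrete-cells property, not even strong normality) to arrange such image-discreteness for the infinitely many cells attached at each stage; and the self-similar piece is homeomorphic to the whole hedgehog, so knowing $D$ on it is exactly what you are trying to prove.

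What actually makes (III) work with the weak test space $J$ is that the extra hypothesis (``$\TTt$ consists of Lindel\"{o}f spaces or $Y$ is locally separable'') forces the complexes furnished by \LEM{complex} to be countable \emph{and locally finite} --- via (SC5) when the relevant $V$'s are separable, and via countable star-finite refinements when the domains are Lindel\"{o}f --- so that hedgehog-type complexes such as $|J_1(\aleph_0)|_{\tau}$ never arise. A countable locally finite polyhedron $K$ is locally compact and splits as the union of two closed subpolyhedra, each a discrete union of compact finite-dimensional polyhedra; each such discrete union is homeomorphic to a closed subset of $J$, so \THM{smap} applied to $J$ (together with (D6) and closed-heredity of $\TTt$) gives density of $D$ on both pieces, \REM{smap} gives SMAP of $D_K$, and the \LEM{fdc}(B)-type factorization finishes ($\star$). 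This is the same mechanism that separates (Z2) from (Z4) in the strong-$Z$-set proposition: $\bigoplus_n I^{n+1}$ suffices only in the locally separable situation, precisely because only locally compact nerves occur there. So you should replace the attempted transfer $D_J\Rightarrow D_{J(\aleph_0)}$ by this local-finiteness argument; as written, case (III) of your proof is incomplete.
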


Since $Q \setminus \{\textup{point}\} \cong Q \times [0,1)$ (\cite{chapmanbook}), \THM{smap} and \THM{dense}--(III) give

\begin{cor}{separable}
If $X$ is separable, $Q_* = Q \setminus \{\textup{point}\} \in \TTt$ and $D_{Q_*}$ has SMAP,
then $D_Y$ is dense for each $Y \in \TTt$.
\end{cor}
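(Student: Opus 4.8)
The plan is to deduce the statement from \THM{dense}--(III): it suffices to exhibit, inside $Q_*$, a closed copy of $J = \bigoplus_{n=1}^{\infty} I^n$ whose associated member of $\Dd$ has SMAP, and then to check that the three hypotheses of \THM{dense}--(III) are met.

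First I would realise $J$ as a closed subset of $Q_* \cong Q \times [0,1)$. Fix closed embeddings $\iota_n\dd I^n \inj Q$ (each $I^n$ is compact) and a strictly increasing sequence $t_1 < t_2 < \ldots$ in $[0,1)$ with $t_n \to 1$, and set $L = \bigcup_{n=1}^{\infty} \iota_n(I^n) \times \{t_n\} \subset Q \times [0,1)$. Any point $(q,t)$ of $Q \times [0,1)$ has a basic neighbourhood $Q \times (t-\epsi,t+\epsi)$ with $t+\epsi < 1$, and such a neighbourhood meets only finitely many of the compact slices $\iota_n(I^n) \times \{t_n\}$; hence the family $\{\iota_n(I^n)\times\{t_n\}\}_{n\geqsl1}$ is discrete in $Q\times[0,1)$ and $L$ is closed and homeomorphic to $J$. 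Transporting $L$ through $Q\times[0,1)\cong Q_*$ and using that $Q_*\in\TTt$ together with the fact that $\TTt$ is closed hereditary, I obtain a closed subset $J'\subseteq Q_*$ with $J'\cong J$; in particular $J\in\TTt$.

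Next I would apply \THM{smap} with the ambient space taken to be $Q_*$. Its hypotheses hold: $Y$ is an ANR, hence a paracompact LEC space, and since $\TTt$ is corelated to $Y$ and $Q_*\in\TTt$, the space $Y$ is a neighbourhood extensor for $Q_*$ (and for $J'$); moreover the subfamily $\{D_A\dd A\text{ closed in }Q_*\}$ of $\Dd$ is a normal system over $Y$ relative to the class of all closed subsets of $Q_*$, because every such subset lies in $\TTt$ and the axioms (D0)--(D3) restrict to this subfamily. Since $D_{Q_*}$ has SMAP, \THM{smap}---more precisely, the last paragraph of its proof---gives that $D_A$ has SMAP for every closed $A\subseteq Q_*$. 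Applying this to $A = J'$ and then using transitivity of $\Dd$ (recall that (D5) implies (D6), and that (D6) implies that two homeomorphic members of $\TTt$ have SMAP simultaneously), I conclude that $D_J$ has SMAP. Finally, since the spaces in $\TTt$ are separable, hence Lindel\"{o}f, all hypotheses of \THM{dense}--(III) are in force: $\TTt$ consists of Lindel\"{o}f spaces, $J = \bigoplus_{n=1}^{\infty} I^n \in \TTt$, and $D_J$ has SMAP. Therefore \textup{($\star$)} holds, which is the assertion.

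There is essentially no hard step here; the only genuine construction is the closed embedding of $J$ into $Q_*$, and that is routine. The point to be careful about is the passage ``$D_{Q_*}$ has SMAP $\Rightarrow$ $D_{J'}$ has SMAP''. One may simply cite the closing lines of the proof of \THM{smap}, or repeat the short argument directly: for a sufficiently fine $\VvV$, every $\VvV$-small map $J'\to Y$ extends, by \LEM{NE-LEC}--(A), to a $\UuU$-small map $Q_*\to Y$ where $\UuU$ witnesses SMAP of $D_{Q_*}$; approximating the extension by members of $D_{Q_*}$ and restricting, using \LEM{NE-LEC}--(B) and axiom (D1), yields approximants lying in $D_{J'}$.
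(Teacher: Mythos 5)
Your proof is correct and is essentially the paper's own argument: the paper compresses the proof to citing $Q_* \cong Q \times [0,1)$, \THM{smap} and \THM{dense}--(III), and your write-up fills in precisely those steps (a closed copy of $J = \bigoplus_{n=1}^{\infty} I^n$ inside $Q \times [0,1)$, transfer of SMAP from $D_{Q_*}$ to closed subsets of $Q_*$ via \THM{smap} and (D1)/(D6), then an appeal to \THM{dense}--(III)). One small caution: your aside ``separable, hence Lindel\"{o}f'' for the members of $\TTt$ requires metrizability, whereas members of $\TTt$ are only assumed normal; it is safer to read the separability hypothesis as (local) separability of the ambient ANR, i.e.\ to use the other disjunct of \THM{dense}--(III), after which nothing else in your argument changes.
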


\SECT{Transitive strongly normal systems\\over special ANR's}

In case of special ANR's such as manifolds modelled on infinite-dimensional Hilbert spaces the points (III)--(IV)
of \THM{dense} may be weakened as it will be done in this section.\par
We begin with the following result.

\begin{lem}{loc-fin}
Let $X$ be a paracompact space and $\{D_A\}_{A \in \TTt}$ a normal system over $X$. If $A \in \TTt$,
$A = \bigcup_{s \in S} A_s$ where $\{A_s\}_{s \in S}$ is a discrete family of closed subsets of $A$,
and $g \in \CCc(A,X)$ is such that $g\bigr|_{A_s} \in D_{A_s}$ for each $s \in S$ and the family $\{g(A_s)\}_{s \in S}$
is locally finite in $X$, then $g \in D_A$.
\end{lem}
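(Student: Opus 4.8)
The plan is to invoke Michael's collection lemma \LEM{michael}, applied not to a family of subsets of $A$ but to a family of open subsets of the \emph{target} space $X$. The trick is that pulling an open subset of $X$ back through $g^{-1}$ of its closure turns the two discreteness requirements of axiom (D3) into automatic consequences of the discreteness of a family of open subsets of $X$. Concretely, I would put
\[
\WwW=\bigl\{O\subset X\dd\ O\text{ open and }g\bigr|_{g^{-1}(\overline{O})}\in D_{g^{-1}(\overline{O})}\bigr\},
\]
which makes sense because $g^{-1}(\overline{O})$ is a closed subset of $A\in\TTt$, hence lies in $\TTt$ ($\TTt$ being closed hereditary). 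Once $\WwW$ is shown to be a Michael collection satisfying the local hypothesis of \LEM{michael}, one gets $X\in\WwW$, and since $\overline{X}=X$ and $g^{-1}(X)=A$ this says precisely that $g\in D_A$.

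Checking (M1) and (M2) is routine: for open $O'\subset O\in\WwW$ one has $g^{-1}(\overline{O'})\subset g^{-1}(\overline{O})$, so (D1) gives $O'\in\WwW$; and for $O_1,O_2\in\WwW$ the closed set $g^{-1}(\overline{O_1\cup O_2})=g^{-1}(\overline{O_1})\cup g^{-1}(\overline{O_2})$ is the union of two closed subsets on which $g$ restricts into the corresponding members of the system, so (D2) yields $O_1\cup O_2\in\WwW$. The crux is (M3). Given a discrete family $\{O_t\}_{t\in T}$ of open subsets of $X$, each in $\WwW$, I would use that a discrete family is closure preserving to write $A':=g^{-1}\bigl(\overline{\bigcup_{t}O_t}\bigr)=\bigcup_{t}g^{-1}(\overline{O_t})$, which lies in $\TTt$; then note that $\{g^{-1}(\overline{O_t})\}_{t\in T}$ is a discrete family of closed subsets of $A'$, since the $g$-preimage of a discrete family of closed sets is again discrete; and note that $\{g(g^{-1}(\overline{O_t}))\}_{t\in T}$ is discrete in $X$, since $g(g^{-1}(\overline{O_t}))\subset\overline{O_t}$ and $\{\overline{O_t}\}_{t\in T}$ is discrete. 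Both hypotheses of (D3) now hold for $A'$ with this family, so (D3) gives $g\bigr|_{A'}\in D_{A'}$, i.e.\ $\bigcup_{t}O_t\in\WwW$. Hence $\WwW$ is a Michael collection.

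It remains to check that every point of $X$ lies in some member of $\WwW$, and this is the only place where the hypothesis that $\{g(A_s)\}_{s\in S}$ is locally finite in $X$ is used. Fix $x\in X$; by local finiteness pick an open $V_0\ni x$ meeting $g(A_s)$ for $s$ in a finite set $S_1$ only, and then --- $X$ being regular, as a paracompact Hausdorff space --- an open $V$ with $x\in V\subset\overline{V}\subset V_0$. Then $\overline{V}$ meets $g(A_s)$ only for $s\in S_1$, so $g^{-1}(\overline{V})\subset\bigcup_{s\in S_1}A_s$, and hence $g^{-1}(\overline{V})$ is the finite union of the closed sets $A_s\cap g^{-1}(\overline{V})$, $s\in S_1$, on each of which $g$ restricts into $D_{A_s\cap g^{-1}(\overline{V})}$ by (D1). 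A finite iteration of (D2) then gives $g\bigr|_{g^{-1}(\overline{V})}\in D_{g^{-1}(\overline{V})}$, that is $V\in\WwW$. (If $x\notin\bigcup_{s}\overline{g(A_s)}$, one takes $V$ with $g^{-1}(\overline{V})=\varempty$, and the empty map lies in $D_{\varempty}$ by (D3) applied to the empty index set.) Thus the local hypothesis of \LEM{michael} is satisfied and we conclude $X\in\WwW$, i.e.\ $g\in D_A$.

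I expect the only genuine obstacle to be getting (M3) right: one must resist pulling the family $\{A_s\}$ through directly --- which would leave the images uncontrolled --- and instead realize that pulling open subsets of $X$ back through $g^{-1}$ of their closures makes the discreteness of $\{O_t\}$ in $X$ simultaneously force the domain family and the image family of (D3) to be discrete. Everything else is elementary point-set bookkeeping, and local finiteness enters solely to cover $X$ by members of $\WwW$.
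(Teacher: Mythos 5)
Your proposal is correct and is exactly the paper's argument: the paper's proof defines the same collection $\WwW=\{U\subset X \text{ open}\dd\ g\bigr|_{g^{-1}(\bar U)}\in D_{g^{-1}(\bar U)}\}$, asserts it is a Michael collection whose members cover $X$ locally, and invokes \LEM{michael}. Your write-up merely supplies the routine verifications of (M1)--(M3) and the local-finiteness step that the paper leaves to the reader, and these checks are all sound.
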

\begin{proof}
Let $\WwW$ be the collection of all open subsets $U$ of $X$ such that $g\bigr|_{g^{-1}(\bar{U})} \in D_{g^{-1}(\bar{U})}$.
It is easy to show that $\WwW$ is a Michael collection and each point of $X$ has a neighbourhood belonging to $\WwW$.
So, \LEM{michael} finishes the proof.
\end{proof}

For spaces $X$ and $Y$, a discrete collection $\BBb = \{B_s\}_{s \in S}$ of closed subsets of $X$ and a subset $B$ of $X$,
put
\begin{equation}
\LLl(B,Y;\BBb) = \{f \in \CCc(B,Y)\dd\ \{f(B \cap B_s)\}_{s \in S} \textup{ is locally finite in } Y\}.
\end{equation}
It is not difficult to prove that if $Y$ is hereditary paracompact, $\OOo$ and $\TTt$ consist of all open subsets of $Y$
and closed subsets of $X$, respectively, and $D_{A,U} = \LLl(A,U;\BBb)$ for $A \in \TTt$ and $U \in \OOo$, then the axiom
(D$*$) is fulfilled and for each $U \in \OOo$ the system $\{D_A = D_{A,U}\}_{A \in \TTt}$ is strongly normal over $U$.\par
Following Banakh and Zarichnyy \cite{b-z} (cf. \cite[($*$2), p.~253]{tor1}), we say that an ANR $X$ has
the \textit{countable locally finite approximation property} (briefly, $\omega$-LFAP) if for every $\UUu \in \cov(X)$
there is a sequence of maps $\{f_n\dd X \to X\}_{n \in \NNN}$ such that each $f_n$ is $\UUu$-close to $\id_X$
and the family $\{f_n(X)\}_{n \in \NNN}$ is locally finite in $X$. Equivalently (using similar method as those
in \cite{tor1} or in the proof of \LEM{fdc}), $X$ has $\omega$-LFAP iff the family
$\LLl(\oplus_{n \in \NNN} L_n,X;\{L_n\}_{n \in \NNN})$ has SMAP with
\begin{enumerate}[\upshape(LF1)]
\item $\{L_n\}_{n \in \NNN} = \{I^{n+1}\}_{n \in \NNN}$ provided $X$ is locally separable,
\item $\{L_n\}_{n \in \NNN} = \{|J_{n+1}(\alpha)|_{\tau}\}_{n \in \NNN}$ for every infinite $\alpha < \comp_l(X)$,
   otherwise.
\end{enumerate}
Further, $X$ is said to have the \textit{$\kappa$-discrete $m$-cells property} (\cite{b-z},
cf. \cite[($*$1), p.~252]{tor1}) if the family
\begin{equation}\label{eqn:k-SDAP}
\{f \in \CCc(I^m \times \kappa,X)\dd\ \{f(I^m \times \{\beta\})\}_{\beta < \kappa}
\textup{ is discrete in } X\}
\end{equation}
is dense in $\CCc(I^m \times \kappa,X)$. Finally, $X$ has the \textit{strong discrete approximation property}
(briefly, SDAP; \cite{acm}, cf. \cite[Corollary 3.2]{tor1}) if the family
$$
\{f \in \CCc(\oplus_{n \in \NNN} I^{n+1},X)\dd\ \{f(I^{n+1})\}_{n \in \NNN} \textup{ is discrete in } X\}
$$
is dense in $\CCc(\oplus_{n \in \NNN} I^{n+1},X)$. These three concepts were used to characterize Hilbert manifolds
(\cite{tor1,tor2}):
\begin{enumerate}[\upshape(H1)]
\item $X$ is a paracompact manifold modelled on $l^2$ iff $X$ is a locally separable completely metrizable ANR
   which has SDAP,
\item $X$ is a paracompact manifold modelled on a Hilbert space of dimension $\alpha > \aleph_0$ iff $X$ is
   a completely metrizable ANR of local weight $\alpha$ which has $\omega$-LFAP and has $\alpha$-discrete $n$-cells
   property for each $n \in \NNN$.
\end{enumerate}
Note also that SDAP is equivalent to $\omega$-LFAP for locally separable ANR's (\cite{tor1}, \cite{curtis})
and that $\kappa$-discrete $m$-cells property (with infinite $\kappa$) is implied by its `locally finite version'
(\cite[Lemma~4]{banakh}), that is, we may replace the word `discrete' in \eqref{eqn:k-SDAP} by `locally finite'.
Further, we infer from \PRO{smap2} that if $X$ has one of these three above defined properties, every open subset of $X$
has it as well. This explains that the condition \cite[($*$2), p.~253]{tor1} is implied by SDAP (for locally separable
$X$). This also shows that in the definitions of SDAP, $\omega$-LFAP and discrete cells approximation properties we may
replace the word `dense' by `has SMAP'.\par
Banakh \cite{banakh} (see also \cite[Theorem~3.1]{brz}) has proved that a connected ANR of weight $\alpha$ is homeomorphic
to a homotopy dense subset of a Hilbert manifold iff it has $\omega$-LFAP and $\alpha$-discrete $n$-cells property
for each $n$. Recall that a subset $A$ of an ANR $Y$ is \textit{homotopy dense} in $Y$ iff there is a homotopy
$H\dd Y \times I \to Y$ such that $H(y,0) = y$ for each $y \in Y$ and $H(Y \times (0,1]) \subset A$
(\cite{brz}, \cite{banakh}). Banakh's result and the above comments show that

\begin{thm}{loc-dense}
If an ANR $X$ is locally homotopy dense embeddable in Hilbert manifolds and $w(U) = w(X)$ for each nonempty open
subset $U$ of $X$, then $X$ itself is homotopy dense embeddable in a Hilbert manifold.
\end{thm}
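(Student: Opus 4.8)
Throughout put $\alpha:=w(X)$. The plan is to feed the local hypothesis into Banakh's theorem quoted just above, after first reducing to connected $X$: by that theorem it will suffice to show that $X$ has the $\omega$-LFAP and, for every $n\in\NNN$, the $\alpha$-discrete $n$-cells property, and I would deduce these two \emph{global} properties from their \emph{local} versions by re-running the Michael-collection argument from the proof of \THM{smap}, now with the ambient space itself in the role of the target. For the reduction, recall that an ANR is locally connected (it is LEC, hence locally contractible), so every connected component $C$ of $X$ is open; then $w(C)=\alpha$ by hypothesis, $C$ is again locally homotopy dense embeddable in Hilbert manifolds, and each nonempty open subset of $C$, being open in $X$, has weight $\alpha$. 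Since a topological sum of homotopy dense embeddings of metrizable spaces into Hilbert manifolds (here all modelled on a Hilbert space of the single dimension $\alpha$) is again such an embedding into a Hilbert manifold, it is enough to treat each $C$, so I assume $X$ connected.

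Next I would produce the two approximation properties locally. Around a given $x\in X$ the hypothesis furnishes a neighbourhood homeomorphic to a homotopy dense subset of a Hilbert manifold; passing to the component of its interior that contains $x$, and using the elementary fact that homotopy density is inherited by open subsets, I obtain an open connected neighbourhood $U\ni x$ that is homeomorphic to a homotopy dense subset of a Hilbert manifold. As $U$ is open in $X$ it has weight $\alpha$, so Banakh's theorem (applied to $U$, which is connected) shows that $U$ has the $\omega$-LFAP and the $\alpha$-discrete $n$-cells property for every $n$; thus $X$ carries an open cover $\{U_s\}_{s\in S}$ by sets with these properties. \emph{This is exactly where the assumption $w(U)=w(X)$ is used}: it forces every member of the cover to carry the properties at the level of the one cardinal $\alpha$, so that a single normal system over $X$, built for the parameter $\alpha$ — from the $\LLl(\,\cdot\,,X;\,\cdot\,)$-families introduced above for the $\omega$-LFAP, and from the discrete-cells families (with ``discrete'' relaxed to ``locally finite'') for the cells property — governs the whole cover simultaneously.

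It remains to pass from local to global. Each of the two properties has the form ``$D_{\Xi}$ has SMAP'', where $\Dd=\{D_A\}_{A\in\TTt}$ is a normal system over $Y:=X$, $\TTt$ being the class of closed subsets of a suitable model space $\Xi$ (such as $\bigoplus_{n}|J_{n+1}(\alpha)|_{\tau}$ or $I^{n}\times\alpha$) and $D_\Xi$ the member indexed by $\Xi$ itself. For such a system, the collection
\[
\WwW=\{\,V\subseteq X\text{ open}:\SsS_X(A,V)\subseteq\overline{D}_A\ \text{(closure in }\CCc(A,X)\text{)}\ \text{for each }A\in\TTt\,\}
\]
is a Michael collection — this is precisely the computation in the proof of \THM{smap}, carried out with the target equal to the ambient space. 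By \LEM{michael} it then suffices to exhibit, for every $x\in X$, a member of $\WwW$ that is a neighbourhood of $x$. I would take $U=U_s\ni x$ and, by normality of $X$, an open $V$ with $x\in V\subseteq\overline{V}\subseteq U$: a map $g$ from a closed $A\in\TTt$ with $\overline{\im}\,g\subseteq V$ actually takes values in $U$ and is $V$-small in $U$; for maps with image-closure inside $\overline{V}\subseteq U$, ``locally finite (resp.\ discrete) in $U$'' coincides with the same notion in $X$ (points of $X\setminus U$ have neighbourhoods missing $\overline{V}$), and because $\overline{V}\subseteq U$ acts as a collar the limitation topologies of $\CCc(A,U)$ and of $\CCc(A,X)$ agree on the set of maps that remain $V$-small. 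Since $U$ has the property in question, the corresponding system over $U$ has dense members (\THM{smap} applied to $U$), so $g$ is approximable, in $\CCc(A,U)$ and hence in $\CCc(A,X)$, by $D_A$-members kept $V$-small; thus $g\in\overline{D}_A$ and $V\in\WwW$. Then \LEM{michael} gives $X\in\WwW$, so $\CCc(\Xi,X)=\SsS_X(\Xi,X)\subseteq\overline{D}_{\Xi}$, i.e.\ $D_{\Xi}$ is dense and hence has SMAP (\THM{smap}); therefore $X$ has the $\omega$-LFAP and every $\alpha$-discrete $n$-cells property, and Banakh's theorem delivers the homotopy dense embedding of $X$ into a Hilbert manifold.

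The hard part will be this last step, and inside it the bookkeeping in the limitation topology: one must verify that, for maps confined to $V$ with $\overline{V}\subseteq U$, the notions ``locally finite''/``discrete'' and closeness with respect to open covers do not depend on whether one works over $U$ or over $X$, and that the relevant normal systems over $U$ and over $X$ are compatible enough for a limit taken in $\CCc(A,U)$ to serve as a limit in $\CCc(A,X)$. This is the non-separable substitute for the second-countability shortcuts of \cite{b-m} and \cite{brz}; the open-hereditary behaviour of all three properties involved is what \PRO{smap2} records, and it is the normal-system formalism of the preceding sections — in particular \PRO{smap2} and \THM{smap} — that is designed to make these manipulations routine.
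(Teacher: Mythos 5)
Your proposal is correct and is essentially the paper's own (very terse) proof spelled out: the paper derives the theorem exactly by combining Banakh's characterization with the preceding remarks that $\omega$-LFAP and the $\alpha$-discrete cells properties can be phrased as SMAP/density of normal systems and are therefore local, via the Michael-collection mechanism of \THM{smap} and \LEM{loc-fin}, with the hypothesis $w(U)=w(X)$ keeping everything at the single cardinal $\alpha$. Your local-to-global bookkeeping (confinement of images to $\overline{V}\subset U$, transfer of local finiteness/discreteness and of limitation-topology closeness from $U$ to $X$) fills in precisely the details the paper leaves to the reader.
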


Now we shall develop the ideas of the previous section in case of ANR's with $\omega$-LFAP or discrete
cells properties. From now on, we assume that $Y$ is an ANR, $\TTt$ a closed hereditary class corelated to $Y$
and $\{D_A\}_{A \in \TTt}$ is a transitive strongly normal system over $Y$. Additionally, we assume that $\TTt$
contains the spaces $\oplus_{n \in N} I^{n+1}$, and $I^m \times \alpha$ and $J(\alpha) =
\oplus_{n \in \NNN} |J_{n+1}(\alpha)|_{\tau}$ for each positive $m \in \NNN$ and $\alpha < \comp_l(Y)$.

\begin{pro}{LFAP}
In each of the following cases \textup{($\star$)} is fulfilled.
\begin{enumerate}[\upshape(I)]
\item $Y$ has $\omega$-LFAP and $D_{I^n \times \alpha}$ has SMAP for each positive $n \in \NNN$
   and $\alpha < \comp_l(Y)$.
\item $Y$ is locally separable and has SDAP and $D_{I^n}$ has SMAP for each $n \geqsl 1$.
\end{enumerate}
\end{pro}
\begin{proof}
The proofs of both the points are similar and therefore we shall only show (I).
Let $\alpha < \comp_l(Y)$. By \THM{dense}, it suffices to prove that $D_{J(\alpha)}$ is dense.
Let $f \in \CCc(J(\alpha),Y)$ and $\UUu \in \cov(Y)$. Take a sequence $\{u_n\dd Y \to Y\}_{n \geqsl 1}$ such that
$u_m \in B(\id_Y,\UUu)$ for each $m$ and the family $\{u_n(Y)\}_{n \geqsl 1}$ is locally finite in $Y$.
By the assumption of (I) and \LEM{fdc}--(A), $D_{|J_n(\alpha)|_{\tau}}$ is dense for each $n \geqsl 1$. So, there are
maps $g_n \in D_{|J_n(\alpha)|_{\tau}}$ such that $g_n \in B(u_n f\bigr|_{|J_n(\alpha)|_{\tau}},\UUu)$.
Put $g = \bigcup_{n=1}^{\infty} g_n \in \CCc(J(\alpha),Y)$. By \LEM{loc-fin}, $g \in D_{J(\alpha)}$.
What is more, $g \in B(f,\st(\UUu))$ (where $\st(\UUu)$ is the star of $\UUu$), which finishes the proof.
\end{proof}

Analogously one may show that

\begin{pro}{LFAP+}
If $Y$ has $\omega$-LFAP and $\alpha$-discrete $m$-cells property for every positive $m \in \NNN$
and $\alpha < \comp_l(Y)$, and $D_{I^n}$ has SMAP for each $n \geqsl 1$, then \textup{($\star$)} is satisfied.
\end{pro}

\SECT{Strong $Z$-sets}

Following Toru\'{n}czyk \cite{tor,tor0,tor1}, we say that a closed subset $A$ of $X$ is a \textit{$Z$-set} in $X$,
if the set $\CCc(Q,X \setminus A)$ is dense in $\CCc(Q,X)$, or---equivalently---if $\CCc(I^n,X \setminus A)$ is dense
in $\CCc(I^n,X)$ for each $n \geqsl 1$. (If $X$ is an ANR, this definition is equivalent to the original one
by Anderson \cite{anderson}.) Similarly, $A$ is said to be a \textit{strong $Z$-set} in $X$ iff for every
$\UUu \in \cov(X)$ there is a map $u\dd X \to X$ which is $\UUu$-close to $\id_X$ and $A \cap \overline{\im}\,u
= \varempty$ (cf. e.g. \cite{bbmw}, \cite{b-m}, \cite{dij1,dij2}). In other words, $A = \bar{A}$ is a [strong] $Z$-set
in $X$ iff $\SsS_X(D,X \setminus A)$ is dense in $\CCc(D,X)$ where $D = Q$ [$D = X$]. Not every $Z$-set in an ANR
is a strong $Z$-set (\cite[Key example, p.~56]{bbmw}). However, combined results of Henderson \cite{henderson}
and Banakh \cite{banakh} show that every $Z$-set in an ANR $X$ having $\omega$-LFAP and $w(X)$-discrete $n$-cells property
for each $n$ is strong. We shall obtain this result independently of the theorems of Henderson and Banakh.\par
The following result is easy to prove.

\begin{lem}{strong}
If $X$ is hereditary paracompact and $A$ its closed subset, $\TTt$ is a class of all topological spaces
and for $Y \in \TTt$ and an open subset $U$ of $X$, $D_{W,U} = \SsS_U(Y,U \setminus A)$, then $\{D_{Y,U}\}_{Y \in \TTt}$
is a transitive strongly normal system over $U$ and \textup{(D$*$)} is fulfilled.
\end{lem}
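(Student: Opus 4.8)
The plan is to verify, separately for each fixed open $U\subset X$ (so that $U$, being an open subspace of the hereditary paracompact space $X$, is itself paracompact and Hausdorff, hence normal, and $A\cap U$ is closed in $U$, i.e.\ $U\setminus A$ is open in $U$), that the family $\{D_{T,U}\}_{T\in\TTt}$ fulfils the axioms (D0)--(D6), and that it satisfies the compatibility condition (D$*$), which in the present notation reads $D_{T,X}\cap\SsS_X(T,U)\subset D_{T,U}$; here I write members of $\TTt$ as $T$ to avoid a clash with the closed set $A$. Everything rests on two trivial facts about the operator $g\mapsto\overline{\im}\,g$ (closure of the image): it is monotone under restriction of the domain, $\overline{\im}(g\bigr|_S)\subset\overline{\im}\,g$; and it carries a \emph{finite} union of closed pieces of the domain, and more generally a union along a family whose images form a locally finite family in the target, onto the corresponding union of closures of images. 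None of the verification is hard --- indeed the lemma is asserted to be easy --- so this is essentially a bookkeeping of definitions.

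First I would dispose of the routine axioms. (D0) is immediate from \DEF{small}: since $U\setminus A$ is open in $U$, the set $D_{T,U}=\SsS_U(T,U\setminus A)$ is open in $\CCc(T,U)$. For (D1), (D5) and (hence) (D6): if $\overline{\im}^U g\subset U\setminus A$ then $\overline{\im}^U(g\bigr|_S)\subset U\setminus A$ for every closed $S$, and $\overline{\im}^U(g\circ f)\subset U\setminus A$ for every map $f$ into $T$, so restrictions and precompositions keep maps inside the system. For (D2), if $T=T_1\cup T_2$ with both $T_j$ closed in $T$, then $\overline{\im}^U g=\overline{\im}^U(g\bigr|_{T_1})\cup\overline{\im}^U(g\bigr|_{T_2})$, and both summands miss $A$. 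For (D3), when $T$ is the union of a discrete (in $T$) family $\{T_t\}_t$ of closed sets with $g\bigr|_{T_t}\in D_{T_t,U}$ and $\{g(T_t)\}_t$ discrete in $U$, the family $\{\overline{g(T_t)}^U\}_t$ is again discrete, hence locally finite, so $\overline{\im}^U g=\overline{\bigcup_t g(T_t)}^U=\bigcup_t\overline{g(T_t)}^U=\bigcup_t\overline{\im}^U(g\bigr|_{T_t})\subset U\setminus A$.

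The only step that is not a one-line observation --- and the step I expect to be the main obstacle, since it is the sole place where normality of $U$, and thus hereditary paracompactness of $X$, is used --- is (D4). Given $T\in\TTt$, a closed $S\subset T$ and $f\in\CCc(T,U)$ with $f\bigr|_S\in D_{S,U}$, i.e.\ $\overline{f(S)}^U\cap A=\varempty$, normality of $U$ (separating the disjoint closed sets $\overline{f(S)}^U$ and $A\cap U$) yields an open $V\subset U$ with $\overline{f(S)}^U\subset V$ and $\overline{V}^U\cap A=\varempty$; I then put $K=f^{-1}(\overline{V}^U)$. This $K$ is closed in $T$, satisfies $S\subset f^{-1}(V)\subset\intt_T K$, and $\overline{f(K)}^U\subset\overline{V}^U\subset U\setminus A$, so $f\bigr|_K\in D_{K,U}$, as (D4) demands. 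Finally (D$*$) follows from the subspace identity $\overline{B}^U=\overline{B}^X\cap U$ valid for $B\subset U$: if $\overline{\im}^X g\subset U$ then $g$ may be regarded as a map into $U$ with $\overline{\im}^U g=\overline{\im}^X g$, so the two hypotheses $\overline{\im}^X g\subset U$ and $\overline{\im}^X g\cap A=\varempty$ together say precisely $\overline{\im}^U g\subset U\setminus A$, i.e.\ $g\in D_{T,U}$. Since transitivity is exactly (D5), already checked, this shows $\{D_{T,U}\}_{T\in\TTt}$ is a transitive strongly normal system over $U$ and that (D$*$) holds.
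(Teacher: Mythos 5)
Your verification is correct: each of (D0)--(D5) and (D$*$) follows directly from the behaviour of $g\mapsto\overline{\im}\,g$ under restriction, finite and locally finite unions, and composition, and your (D4) argument via normality of the open subspace $U$ (the one place hereditary paracompactness is genuinely used) is exactly the intended one. The paper offers no proof of this lemma, declaring it easy, and your write-up is precisely the routine check it has in mind.
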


As an immediate consequence of the above lemma and the results of the previous sections, we obtain

\begin{pro}{strong}
Let $X$ be an ANR and $A$ its closed subset.
\begin{enumerate}[\upshape(Z1)]
\item \textup{(\cite{b-m}, \cite[Proposition~1.4.1]{brz})} If $X$ is locally compact, then every $Z$-set in $X$
   is a strong $Z$-set.
\item \textup{(cf. \cite[Exercise 1.4.10]{brz})} If $X$ is locally separable, then $A$ is a strong $Z$-set in $X$
   \iaoi{} $\SsS_X(\oplus_{n \in \NNN} I^{n+1},X \setminus A)$ has SMAP,
   iff $\SsS_X(Q \setminus \{\textup{point}\},X \setminus A)$ has SMAP.
\item If $X$ is locally FDT, then $A$ is a strong $Z$-set iff $\SsS_X(I^n \times \alpha,X \setminus A)$ has SMAP
   for each positive $n \in \NNN$ no greater than $\dim X$ and each $\alpha < \comp_l(X)$.
\item $A$ is a strong $Z$-set in $X$ iff $\SsS_X(\oplus_{n \in \NNN} |J_{n+1}(\alpha)|_{\tau},X \setminus A)$ has
   SMAP for each positive $\alpha < \comp_l(X)$.
\item \textup{(\cite[Lemma~1.3]{b-m})} If $A$ is a strong $Z$-set in $X$, then $U \cap A$ is a strong
   $Z$-set in $U$ for each open subset $U$ of $X$.
\item \textup{(for separable $X$ see \cite[Corollary~1.5]{b-m})} If every point of $A$ has an open neighbourhood $U$
   in $X$ such that $A \cap U$ is a strong $Z$-set in $U$, then $A$ is a strong $Z$-set in $X$.
\item \textup{(cf. \cite[Proposition~1.7]{b-m} or \cite[Proposition 1.4.3]{brz})} If $X$ is locally separable
   and has SDAP or if $X$ has $\omega$-LFAP and $\alpha$-discrete $m$-cells property for each positive $m \in \NNN$
   and $\alpha < \comp_l(X)$, then every $Z$-set in $X$ is strong.
\end{enumerate}
\end{pro}

The counterpart of (Z6) for $Z$-sets in its whole generality was first proved by Eells and Kuiper \cite{e-k}.
Their proof is based on the theorem of Whitehead \cite{whitehead} on weak homotopy equivalences and they worked
with Anderson's \cite{anderson} definition of a $Z$-set. Here we gave an alternative proof of (Z6) for `Toru\'{n}czyk's
$Z$-sets'.\par
The property (Z5), by a simple use of SMAP, may be strengthened as follows: if $K$ is a strong $Z$-set in a metrizable
space $Y$ and $X$ is an open subset of $Y$ such that $X$ is an ANR, then $K \cap X$ is a strong $Z$-set in $X$.\par
Following \cite{brz}, we say that an ANR $X$ has the \textit{strong $Z$-set property} iff every $Z$-set in $X$ is strong.
(Z5) and (Z6) yield that the strong $Z$-set property is open hereditary (i.e. every open subset of an ANR $X$ has
the strong $Z$-set property provided so has $X$) and is local (if each point of $X$ has an open neighbourhood
with the strong $Z$-set property, then $X$ has it as well). This facts will be used in the next section.\par
Bestvina and Mogilski \cite{b-m} have proved that a $Z$-set being a strong $\sigma$-$Z$-set in a separable ANR is itself
a strong $Z$-set. (This property were used by them to prove (Z6) for separable $X$.) We do not know whether the assumption
of separability of the ANR in this statement may be omitted. The lack of such a property will force us in the next section
to assume that ANR's have the strong $Z$-set property.

\SECT{Absorbing sets}

In this section we generalize most important results of \cite{b-m} to nonseparable case. All undefined symbols
and notions have the same meaning as in \cite{b-m} (after deleting all assumptions dealing with separability).
In particular, a \textit{$\CcC$-absorbing set} is any space $X$ such that: $X$ is strongly $\CcC$-universal and homotopy
dense embeddable into a Hilbert manifold, $X \in \CcC_{\sigma}$ and $X$ is a \textit{$\sigma$-$Z$-space}
(i.e. $X = \bigcup_{n=1}^{\infty} X_n$ for some sequence $(X_n)_{n=1}^{\infty}$ of $Z$-sets in $X$).\par
The proofs presented in \cite{b-m} (see also \cite{brz}) of the following results (we quote only the most important ones)
work also in nonseparable case (Chapman's generalization \cite{chapman} of Anderson's-McCharen's theorem \cite{and-mcch}
is needed).

\begin{thm}{several}
Let $\CcC$ be a topological, closed hereditary and additive class of metrizable spaces and let $X$ be a metrizable space.
\begin{enumerate}[\upshape(SU1)]
\item \textup{(\cite[Proposition 2.1]{b-m})} If $X$ is strongly $\CcC$-universal, so is every its open subset.
\item \textup{(\cite[Proposition 2.2]{b-m})} If $\CcC$ is also open hereditary, $X$ is an ANR having the strong $Z$-set
   property and for each $Z$-set $K$ in $X$ the space $X \setminus K$ is $\CcC$-universal, then $X$ is strongly
   $\CcC$-universal.
\item \textup{(\cite[Proposition 2.7]{b-m}; for general proof see \cite[Proposition~1.5.1]{brz})} If $X$ is an ANR
   having an open cover consisting of strongly $\CcC$-universal sets, then $X$ itself is strongly $\CcC$-universal.
\item \textup{(\cite[Proposition 2.6]{b-m} or \cite[Theorem 1.5.11]{brz})} If $X$ is strongly $\CcC$-universal
   and $Y$ is an ANR such that $X \times Y$ has the strong $Z$-set property, then $X \times Y$ is strongly
   $\CcC$-universal as well.
\item \textup{(\cite[Theorem 3.1]{b-m} or \cite[Theorem 1.6.3]{brz})} Two $\CcC$-absorbing sets are homeomorphic
   iff they have the same homotopy type.
\item \textup{(\cite[Theorem 3.2]{b-m}; $Z$-set Unknotting Theorem)} If $X$ is $\CcC$-ab\-sorbing, then every
   homeomorphism between two $Z$-sets in $X$ which is $\UUu$-homotopic to the inclusion map for some $\UUu \in \cov(X)$
   is extendable to a homeomorphism of the whole space $\st(\UUu)$-close to the identity map on $X$.
\item \textup{(\cite[Proposition 2.5 and Corollaries 5.4 and 5.5]{b-m})} If $X$ is an AR having more than one point,
   then the weak product $$W(X,*) = \{(x_n)_{n=1}^{\infty} \in X^{\omega}\dd\ x_n = * \text{ for almost all } n\}$$
   (where $* \in X$ is a fixed `basepoint') is $\DdD$-absorbing where $\DdD$ is the class of all spaces admitting closed
   embeddings into $W(X,*)$. The class $\DdD$ is topological, closed hereditary and additive. If $Y$ is also an AR,
   then $W(X,*)$ and $W(Y,*)$ are homeomorphic iff $X$ embeds as a closed subset of $W(Y,*)$ and $Y$ embeds as a closed
   subset of $W(X,*)$.
\end{enumerate}
\end{thm}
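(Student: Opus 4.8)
The plan is to verify, item by item, that the Bestvina--Mogilski arguments for (SU1)--(SU7) in~\cite{b-m} (and their streamlined reworking in~\cite{brz}) transcribe essentially verbatim once the handful of places where the second axiom of countability intervenes are replaced by the machinery of Sections~2--5. First I would isolate those places. In~\cite{b-m} separability is used in three ways: to know that ``being a strong $Z$-set'' is a local and open-hereditary property; to run an absorption/back-and-forth argument against a countable sieve of $Z$-sets when upgrading $\CcC$-universality to strong $\CcC$-universality; and to invoke Toru\'{n}czyk's Hilbert-manifold characterization in the separable (SDAP) form. The first obstruction is removed by (Z5)--(Z6) of \PRO{strong}, proved here without separability, which also give that the strong $Z$-set property is open hereditary and local. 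The second is removed by the density statements \THM{dense}, \PRO{LFAP} (together with \LEM{michael}, \LEM{loc-fin} and \PRO{smap2}), which let one approximate an arbitrary map, in the limitation topology, by one missing a prescribed $Z$-set while simultaneously controlling its behaviour along a discrete family of cells. The third is removed by the nonseparable characterization (H2) recalled in Section~3 and Chapman's nonseparable homeomorphism-extension theorem~\cite{chapman}. Since none of these survives, the proofs go through.

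Then I would run through the items. For \emph{(SU1)} one applies \THM{smap}/\PRO{smap2} to the strongly normal transitive system $Y\mapsto\SsS_U(Y,U\setminus A)$ supplied by \LEM{strong}, so that strong $\CcC$-universality passes to open subsets exactly as in~\cite[Proposition~2.1]{b-m}. For \emph{(SU2)} one repeats the Bestvina--Mogilski absorption scheme: given a closed embedding of a member of $\CcC$ into $X$ whose image meets a $Z$-set $K$, push it off $K$ using the strong $Z$-set property and alternate with $\CcC$-universality of the complements of the successive $Z$-sets, the convergence of the process now being furnished by the SMAP formulation of the relevant normal system via \PRO{smap2} rather than by a countable neighbourhood basis. \emph{(SU3)} is \LEM{michael} applied to the normal system ``strongly $\CcC$-universal maps into $U$'', parallel to~\cite[Proposition~1.5.1]{brz}. \emph{(SU4)} follows from (SU2) and \PRO{strong}, since $X\times Y$ is assumed to have the strong $Z$-set property. \emph{(SU5)} and \emph{(SU6)}, the uniqueness and $Z$-set unknotting theorems, use the homotopy-dense embedding of a $\CcC$-absorbing set into a Hilbert manifold, the unknotting theorem there, and (SU1)--(SU4); the arguments of~\cite[Theorems~3.1--3.2]{b-m} are unchanged. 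For \emph{(SU7)} one checks that $W(X,*)$ is an AR, a $\sigma$-$Z$-space, has $\omega$-LFAP and the $\alpha$-discrete $m$-cells property (hence is homotopy dense embeddable into a Hilbert manifold, by Banakh's theorem, i.e.\ \THM{loc-dense}), lies in $\DdD_{\sigma}$, and is strongly $\DdD$-universal by (SU2); that $\DdD$ is topological, closed hereditary and additive is formal, and the concluding closed-embedding criterion is (SU5).

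I expect the main obstacle to be \emph{(SU2)}. The separable proof is organized around a nested sequence of $Z$-sets exhausting $X$ plus a diagonal argument, and the nonseparable substitute must run the absorption ``in one stroke'' against the SMAP of the normal systems $A\mapsto\SsS_{U}(A,U\setminus K)$ --- using \PRO{smap2} to transport density from the whole space to the closed pieces and \LEM{loc-fin} to glue the partial approximations over a discrete family of cells. Checking that the limitation-topology limit of this process exists, is a near-homeomorphism onto its image, and realizes the required closed embedding is the step at which the apparatus of the earlier sections does its genuine work; the remaining items are then a faithful transcription of~\cite{b-m}.
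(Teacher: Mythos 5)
Your proposal takes essentially the same approach as the paper: the paper offers no detailed proof of (SU1)--(SU7), asserting only that the arguments of \cite{b-m} (and \cite{brz}) carry over to the nonseparable case once Chapman's generalization \cite{chapman} of the Anderson--McCharen theorem is invoked, the nonseparable inputs (locality and open-heredity of strong $Z$-sets, Banakh's embedding theorem) having been supplied by the earlier sections, and the one genuinely separability-bound ingredient of \cite{b-m} (Proposition~2.3 and the ``strong $\sigma$-$Z$-set $\Rightarrow$ strong $Z$-set'' lemma, whose nonseparable validity the paper explicitly leaves open) being sidestepped by writing the strong $Z$-set property into the hypotheses of (SU2) and (SU4). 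The only slight mismatch is your expectation that (SU2) requires a new SMAP-driven absorption scheme: with the strong $Z$-set property already assumed, the original proof of \cite[Proposition~2.2]{b-m} transcribes without such reworking, and the SMAP/normal-system machinery is really needed only for the strong $Z$-set facts of \PRO{strong} and for \THM{loc-dense}, exactly as you use it elsewhere.
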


Bestvina and Mogilski have also proved that if a separable ANR $X$ is a $\sigma$-$Z$-space, has the strong $Z$-set
property and is strongly $\CcC$-universal for a topological closed hereditary additive class $\CcC$, then it is also
strongly $\CcC_{\sigma}$-universal (see \cite[Proposition 2.3]{b-m}, the proof uses in its final part the second axiom
of countability). We do not know whether the assumption of separability of $X$ may be omitted in this. However, below
we prove its counterpart for absorbing sets (which, alternatively, may be used to prove the last claim of (SU7)).
This result will be applied in the next section.\par
One of the most important results on separable absorbing sets, beside (SU5), states that in the definition
of an absorbing set one may omit the assumption of homotopy dense embeddability into a Hilbert manifold---this is
a consequence of \cite[Lemma 1.9]{b-m} and Banakh's theorem \cite{banakh}. It turns out that this is true also
for nonseparable absorbing sets, which shows

\begin{pro}{sigma}
Let $\CcC$ be a topological, closed hereditary and additive class and $X$ be an ANR which is strongly $\CcC$-universal,
has the strong $Z$-set property and is a $\sigma$-$Z$-space. If $X \in \CcC_{\sigma}$, then $X$ is
$\CcC_{\sigma}$-absorbing.
\end{pro}
\begin{proof}
For simplicity, put $\kappa = w(X)$. The proof of \cite[Proposition 2.3]{b-m} shows that the set of closed embeddings
is dense in $\CCc(P,X)$ for any $P \in \CcC_{\sigma}$. Since $X \in \CcC_{\sigma}$, we get that $X \times \NNN \in
\CcC_{\sigma}$ as well and thus the natural projection $X \times \NNN \to X$ is approximable by closed embeddings.
This easily implies that $X$ has $\omega$-LFAP. What is more, by \LEM{omega}, $X$ contains a discrete subset
of cardinality $\kappa$, say $A$. Now if $\{U_a\}_{a \in A}$ is a discrete family of open subsets of $X$ such that
$a \in U_a$ for $a \in A$, there is a family $\{h_a\dd X \to X\}_{a \in A}$ of closed embeddings such that $\im h_a
\subset U_a$. This shows that $X$ contains a closed subset homeomorphic to $X \times A$
(namely $\bigcup_{a \in A} \im h_a$) and therefore also the natural projection $X \times A \to X$ is approximable
by closed embeddings. Hence $X$ satisfies the $\kappa$-discrete $m$-cells property for each $m$. So, thanks to Banakh's
theorem \cite{banakh}, $X$ is homotopy dense embeddable into a Hilbert manifold. Now we shall check that $X$
is $\CcC_{\sigma}$-universal. To prove this, it is enough to show that $Z$-embeddings form a dense subset
of $\CCc(X,X)$.\par
Let $\{u_{\beta}\}_{\beta < \kappa}$ be a dense subset of $\CCc(Q,X)$. Further, let $$L = \oplus_{\beta < \kappa}
\im u_{\beta}$$ be the topological disjoint union of the images of the maps $u_{\beta}$, $X'$ be the topological
disjoint union of $X$ and $L$ and let $v\dd L \to X$ be given by $v(x) = x$. By the above argument, both the spaces
$L$ and $X'$ are members of $\CcC_{\sigma}$. Fix a map $f\dd X \to X$ and a cover $\UUu_0 \in \cov(X)$ and put
$f_0 = f \cup v\dd X' \to X$. Write $X = \bigcup_{n=1}^{\infty} X_n$ where each $X_n$ is a (strong) $Z$-set in $X$,
$X_n \in \CcC$ and $X_n \subset X_{n+1}$, and let $d$ denote the metric of $X$. Now arguing similarly as in the
proof of \cite[Proposition~2.3]{b-m}, construct sequences of closed embeddings $\{f_n\dd X' \to X\}_{n=1}^{\infty}$
and covers $\{\UUu_n\}_{n=1}^{\infty}$ of $X$ such that for each $j \geqsl 1$,
\begin{enumerate}[\upshape(E1)]
\item $f_j\bigr|_{X_j}$ is a $Z$-embedding,
\item $f_j\bigr|_{X_{j-1}} = f_{j-1}\bigr|_{X_{j-1}}$ (with $X_0 = \varempty$),
\item $f_j$ is $\UUu_j$-close to $f_{j-1}\bigr|_X \cup v\dd X' \to X$,
\item $\UUu_j$ is a star refinement of $\UUu_{j-1}$, $\mesh_d \UUu_j < 2^{-j}$ and if $g\dd X \to X$ is $\UUu_j$-close
   to $f_{j-1}\bigr|_X$, then $\overline{\im}\,g \cap \bigcup_{k=1}^{j-1} f_k(L) = \varempty$
   (where $\bigcup_{k=1}^0 = \varempty$),
\end{enumerate}
and the formula $h(x) = \lim_{n\to\infty} f_n(x)$ well defines a closed embedding $h\dd X \to X$. Then $h$
is $\UUu_0$-close to $f$ and $h(X) \cap \bigcup_{n=1}^{\infty} f_n(L) = \varempty$. But thanks to (E3) and (E4),
the family $$\{f_n\bigr|_{\im u_{\beta}} \circ u_{\beta}\dd\ n \geqsl 1,\ \beta < \kappa\}$$ is dense in $\CCc(Q,X)$
and thus $h(X)$ is a $Z$-set in $X$.\par
To finish the proof, it suffices to apply the above argument for each open subset $U$ of $X$ (instead of $X$)
and use (SU2).
\end{proof}

The next two results will be used in the next section.

\begin{cor}{F-sigma}
Under the assumptions of \PRO{sigma}, every $\FFf_{\sigma}$ subset of $X$ is closed embeddable in $X$.
\end{cor}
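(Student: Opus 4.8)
The plan is to reduce the statement to the $\CcC_{\sigma}$-universality of $X$, which is part of the conclusion of \PRO{sigma}. So the whole task is to verify that an arbitrary $\FFf_{\sigma}$ subset of $X$ is a member of the class $\CcC_{\sigma}$; once that is done, the fact that $X$ is $\CcC_{\sigma}$-absorbing (hence in particular strongly, and so plainly, $\CcC_{\sigma}$-universal) immediately provides a closed embedding of that subset into $X$.

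First I would fix an $\FFf_{\sigma}$ subset $A = \bigcup_{n=1}^{\infty} A_n$ of $X$ with each $A_n$ closed in $X$, and use the hypothesis $X \in \CcC_{\sigma}$ from \PRO{sigma} to write $X = \bigcup_{k=1}^{\infty} C_k$, where each $C_k$ is a closed subset of $X$ lying in $\CcC$. Then for each pair $(n,k)$ the set $A_n \cap C_k$ is closed in $C_k$ (because $A_n$ is closed in $X$), so it belongs to $\CcC$ by the assumption that $\CcC$ is closed hereditary; at the same time $A_n \cap C_k$ is a closed subset of $A$, being the trace on $A$ of the set $A_n \cap C_k$ which is closed in $X$. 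Consequently $A = \bigcup_{n,k}(A_n \cap C_k)$ exhibits $A$ as a countable union of its closed subspaces, each of which lies in $\CcC$; that is, $A \in \CcC_{\sigma}$.

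Next I would invoke \PRO{sigma}: under the stated hypotheses $X$ is $\CcC_{\sigma}$-absorbing, hence in particular $\CcC_{\sigma}$-universal, so the space $A \in \CcC_{\sigma}$ admits a closed embedding into $X$. This finishes the argument.

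There is no genuine obstacle here beyond unwinding the definitions; the only point that needs a moment of care is the simultaneous verification that the pieces $A_n \cap C_k$ are closed in $A$ \emph{and} belong to $\CcC$, which rests on the closed hereditariness of $\CcC$ together with the transitivity of the relation ``is a closed subset of''.
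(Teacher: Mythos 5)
Your argument is correct and is exactly the intended one: the paper leaves this corollary without an explicit proof precisely because it amounts to observing that an $\FFf_{\sigma}$ subset of $X$ lies in $\CcC_{\sigma}$ (using $X \in \CcC_{\sigma}$ and closed hereditariness, just as you do) and then invoking the $\CcC_{\sigma}$-universality of $X$ established in \PRO{sigma}. Nothing is missing.
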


\begin{pro}{cl-emb}
Let $X$ be a noncompact AR which contains a closed subset homeomorphic to $X \times X$. Let $Y$ be a metrizable space
such that $w(Y) \leqsl w(X)$. Each of the following two conditions is sufficient for the closed embeddability of $Y$
in $X$.
\begin{enumerate}[\upshape(A)]
\item Every point of $Y$ has a neighbourhood (not necessarily open or closed) which is closed embeddable in $X$.
\item $Y$ may be covered by a locally finite collection of its closed subsets each of which is closed embeddable in $X$.
\end{enumerate}
\end{pro}
\begin{proof}
First assume that $Y$ is the union of its two closed subsets $Y_1$ and $Y_2$ which are closed embeddable in $X$.
Since $X$ is an AR, there is a map $u_j\dd Y \to X$ such that $u_j\bigr|_{Y_j}$ is a closed embedding. Further, take
a map $\lambda\dd Y \to I$ which is positive on $Y \setminus Y_1$ and negative on $Y \setminus Y_2$ (such a map exists
because $Y_1 \cap Y_2$ is closed and $\GGg_{\delta}$ in both the spaces $Y_1$ and $Y_2$). Now put $u\dd Y \ni y \mapsto
(u_1(y),u_2(y),\lambda(y)) \in X \times X \times I$. One easily checks that $u$ is a closed embedding and that there is
a closed embedding of $X \times X \times I$ into $X$ (because of the facts that $X \times X$ is closed embeddable in $X$
and $X$ contains an arc).\par
Now to prove the sufficiency of (A), let $\WwW$ be the family of all open subsets $U$ of $Y$ such that $\bar{U}$ is closed
embeddable in $X$. Thanks to \LEM{michael}, it is enough to prove that $\WwW$ is a Michael collection. The property (M1)
is immediate, (M2) follows from the first part of the proof, and to see (M3), use \LEM{omega} and the fact that $X$
contains a closed copy of $X \times X$ (and therefore also a closed copy of $X \times w(X)$).\par
The sufficiency of (B) is implied by the one of (A) and the first part of the proof (with simple induction argument).
\end{proof}

Let us call a class $\CcC$ \textit{product} if $C_1 \times C_2 \in \CcC$ for each $C_1$, $C_2 \in \CcC$.\par
For a metrizable space $X$, let $\FfF(X)$ be the class of all metrizable spaces which are closed embeddable in $X^n
\times I^n$ for some $n \in \NNN$. It is easily seen that $\FfF(X)$ is topological, closed hereditary and product
and that $\FfF(X)$ coincides with the class of metrizable spaces admitting closed embeddings in $X^n$
for some $n \in \NNN$ provided $X$ contains an arc. What is more, the first paragraph of the proof of \PRO{cl-emb} shows
that $\FfF(X)$ is additive for an AR $X$. This facts will be used later.

\begin{rem}{final}
In Final Remarks (on page 425) of \cite{d-mo} Dobrowolski and Mogilski give two classical examples of nonseparable
absorbing sets, namely $l^2_f(A)$ and $\Sigma l^2(A)$ (where $A$ is an uncountable set). They also mention (SU5)
in its full generality (compare \EXM{fd}).
\end{rem}

\SECT{Rigid embeddings}

The main aim of this section is to prove that the weak product $W(X,*)$ (defined in (SU7)) for an arbitrary AR $X$
is homeomorphic to some pre-Hilbert space $E$ with $E \cong \Sigma E$, where
$$\Sigma E = \{(x_n)_{n=1}^{\infty} \in E^{\omega}\dd\ x_n = 0 \textup{ for almost all } n\}.$$
To do this, we shall introduce and investigate certain types of embeddings. But first we establish notation.
For $t = (t_1,\ldots,t_n) \in \RRR^n$ let $|t|_{\infty} = \max_j |t_j|$ and $|t|_p = (\sum_{j=1}^n |t_j|^p)^{\frac1p}$.
Let $E$ be a real vector space. For $s = (s_1,\ldots,s_n) \in \RRR^n$ and $v = (v_1,\ldots,v_n) \in E^n$, put
$$
s \bullet v = \sum_{j=1}^n s_j v_j.
$$
More generally, if $h\dd D \to E$ is any function (with an arbitrary domain $D$) and $x = (x_1,\ldots,x_n) \in D^n$,
then $s \bullet h(x)$ denotes the vector $\sum_{j=1}^n s_j h(x_j)$. If $y = (y_1,\ldots,y_n) \in D^n$ and $\sigma$
is a permutation of the set $\{1,\ldots,n\}$, $y_{\sigma}$ stands for the $n$-tuple $(y_{\sigma(1)},\ldots,y_{\sigma(n)})$.
Finally, for a metric space $(X,d)$ and a natural number $n \geqsl 2$, $\delta_n\dd X^n \to [0,+\infty)$ is a function
defined by
$$
\delta_n(x) = \min \{d(x_j,x_k)\dd\ j \neq k\}, \qquad x = (x_1,\ldots,x_n) \in X^n.
$$
Additionally, we put $\delta_1 \equiv 1\dd X \to [0,+\infty)$. Now we are ready to put the following

\begin{dfn}{rigid}
Let $(X,d)$ be a metric space. A map $h\dd X \to E$ where $(E,\|\cdot\|)$ is a normed space is said to be
\textit{weakly separately rigid} (with respect to $d$) if for each natural number $n \geqsl 1$ and reals $r > 0$
and $M \geqsl 1$ there is a constant $C = C(r,n,M)$ such that $\|t \bullet h(x)\| \geqsl C$ whenever $t = (t_1,\ldots,t_n)
\in \RRR^n$ and $x \in X^n$ are such that $\min_j |t_j| \geqsl \frac1M$, $|t|_{\infty} \leqsl M$ and $\delta_n(x) \geqsl
r$. If the constant $C$ can be chosen independently of $M$ (that is, $C = C(r,n)$) for each $n$ and $r$, the map $h$
is said to be \textit{separately rigid} and if $C$ depends only on $r$, $h$ is called \textit{almost rigid}. Finally,
$h$ is \textit{rigid} if for each $r > 0$ there is a constant $C = C(r)$ such that $\|t \bullet h(x)\| \geqsl
C |t|_{\infty}$ for each $t \in \RRR^n$ and $x \in X^n$ with $\delta_n(x) \geqsl r$, and any $n \geqsl 1$.\par
A subset $A$ of a normed space $E$ is said to be \textit{rigid}, \textit{almost rigid}, etc. if the inclusion map
$A \inj E$ is (respectively) rigid, almost rigid, etc.
\end{dfn}

It is easy to see that a weakly separately rigid map is injective and its image is linearly independent. Note also
that if a uniformly continuous map is rigid, almost rigid, separately rigid or weakly separately rigid, then so is its
image. In the sequel we shall show that every weakly separately rigid map is an embedding whose image is closed in its
own linear span. To see this, we need the following

\begin{lem}{rigid}
Let $h\dd X \to E$ be a weakly separately rigid map of a metric space $(X,d)$ into a normed space $(E,\|\cdot\|)$
such that the linear span of $\im h$ coincides with $E$. Let $p \geqsl 1$, $r > 0$, $M \geqsl 1$,
$t^{(n)} = (t^{(n)}_1,\ldots,t^{(n)}_p) \in [-M,-\frac1M]^p \cup [\frac1M,M]^p \subset \RRR^p$, $x^{(n)} =
(x^{(n)}_1,\ldots,x^{(n)}_p) \in X^p$ be such that $\delta_p(x^{(n)}) \geqsl r$ and the sequence $(t^{(n)} \bullet
h(x^{(n)}))_{n=1}^{\infty}$ converges in $E$. Then there is a sequence $(\sigma_n)_{n=1}^{\infty}$ of permutations
of the set $\{1,\ldots,p\}$ such that both the sequences $(t^{(n)}_{\sigma_n})_{n=1}^{\infty}$
and $(x^{(n)}_{\sigma_n})_{n=1}^{\infty}$ converge in $\RRR^p$ and $X^p$ respectively.
\end{lem}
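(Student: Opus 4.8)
The plan is to use the hypothesis $\lin\im h=E$ to fix, once and for all, a \emph{finite} set of points of $X$ near which the $x^{(n)}_j$ are forced to accumulate, and then to read off the permutations from that. \emph{Step~A (the target configuration).} The weak separate rigidity of $h$, applied to $t^{(n)}$ and $x^{(n)}$, gives $\|t^{(n)}\bullet h(x^{(n)})\|\geqsl C(r,p,M)>0$; hence $y:=\lim_n t^{(n)}\bullet h(x^{(n)})$ is nonzero. As $y\in E=\lin\im h$, we may write $y=\sum_{i=1}^{q}s_i h(z_i)$ with $z_1,\dots,z_q\in X$ pairwise distinct and $s_1,\dots,s_q\in\RRR\setminus\{0\}$; since any finite family of distinct points of $X$ has linearly independent $h$-images (weak separate rigidity again, with $r$ their least distance and $M$ large), this representation is unique. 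Put $r_0=\min\bigl(r,\min_{i\neq i'}d(z_i,z_{i'})\bigr)>0$.

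\emph{Step~B (accumulation near $\{z_1,\dots,z_q\}$).} I claim: for every $\rho\in(0,r_0)$ there is $N$ such that for $n\geqsl N$ every $x^{(n)}_j$ lies within $\rho$ of $\{z_1,\dots,z_q\}$. This is the heart of the proof and I would argue it by induction on $p$. Suppose it fails; passing to a subsequence we get indices $j(n)$ with $d(x^{(n)}_{j(n)},z_i)\geqsl\rho$ for all $i$, and (compactness of $[-M,-\tfrac1M]^p\cup[\tfrac1M,M]^p$) a limit $t^{(n)}\to\tau$ with all $\tau_j\neq0$. If for infinitely many $n$ \emph{none} of $x^{(n)}_1,\dots,x^{(n)}_p$ is within $\rho$ of $\{z_i\}$, then $y_n-y$, with $y_n:=t^{(n)}\bullet h(x^{(n)})$, is a combination of the $p+q$ pairwise distinct, $\rho$-separated points $x^{(n)}_1,\dots,x^{(n)}_p,z_1,\dots,z_q$ whose coefficients ($t^{(n)}_j$ on one side, $-s_i$ on the other) lie in a fixed annulus, so weak separate rigidity bounds $\|y_n-y\|$ below by a positive constant --- contradicting $y_n\to y$. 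Otherwise, for large $n$ some $x^{(n)}_j$ lie within $\rho$ of $\{z_i\}$ (while $x^{(n)}_{j(n)}$ does not). Grouping the $p+q$ points into metric components of the ``distance $<\rho/(p+q)$'' graph (each of diameter $<r_0$, hence containing at most one $x^{(n)}_j$ and at most one $z_i$) and replacing, in $y_n-y$, each point by the $z_i$ of its component when one is present, one incurs an error that is a bounded combination of vectors $h(x)-h(z_i)$ with $d(x,z_i)<\rho$; because $\{z_1,\dots,z_q\}$ is \emph{finite}, continuity of $h$ at these points is effectively uniform, so this error tends to $0$ with $\rho$. The component of $x^{(n)}_{j(n)}$ carries no $z_i$, hence contributes $x^{(n)}_{j(n)}$ itself with coefficient $t^{(n)}_{j(n)}$ of modulus $\geqsl1/M$; the terms whose coefficient has modulus $\geqsl\tfrac1{2M}$ form a nonempty $\rho/(p+q)$-separated sub-configuration, so rigidity bounds their total below, while the remaining terms are necessarily multiples $(t^{(n)}_j-s_i)h(z_i)$ of the fixed vectors $h(z_i)$. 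As $t^{(n)}_j\to\tau_j$, those terms and the ``matched'' part converge, so the sum of the remaining (``unmatched'') terms $t^{(n)}_j h(x^{(n)}_j)$ converges to some $y'\in E$ whose unique finite representation involves only points bounded away from $\{z_i\}$; this is a strictly shorter instance of the same situation, and the inductive hypothesis yields the contradiction. (The case $p=1$ falls under the first alternative.)

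\emph{Step~C (conclusion).} Fix $\rho<r_0/2$; by Step~B, for $n$ large the rule ``$x^{(n)}_j$ is the point of $x^{(n)}$ within $\rho$ of $z_i$'' is an injection $\{1,\dots,p\}\hookrightarrow\{1,\dots,q\}$ whose image, after passing to a subsequence, is a fixed set $\{z_i:i\in I\}$; choose $\sigma_n$ so that each $x^{(n)}_{\sigma_n(j)}$ is within $\rho$ of a fixed enumeration $(z_{\iota(j)})_j$ of that set. Since $\rho>0$ was arbitrary and the injection and $I$ are unaffected when $\rho$ shrinks, in fact $x^{(n)}_{\sigma_n}\to(z_{\iota(1)},\dots,z_{\iota(p)})$ \emph{in} $X^p$. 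Now $t^{(n)}_{\sigma_n}\bullet h(x^{(n)}_{\sigma_n})=y_n\to y$ and $h(x^{(n)}_{\sigma_n(j)})\to h(z_{\iota(j)})$; extracting a convergent subsequence $t^{(n)}_{\sigma_n}\to\tau$ and invoking linear independence of $h(z_1),\dots,h(z_q)$ forces $q=p$, $\iota$ bijective, and $\tau_j=s_{\iota(j)}$. This limit does not depend on the subsequence, so $t^{(n)}_{\sigma_n}$ converges in $\RRR^p$; and since permuting the coordinates of $t^{(n)}$ and $x^{(n)}$ by the \emph{same} $\sigma_n$ leaves $t^{(n)}\bullet h(x^{(n)})$ unchanged, no consistency issue arises. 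The main obstacle is Step~B, and within it the case where the coefficient $t^{(n)}_j$ of a point $x^{(n)}_j$ near some $z_i$ is close to $s_i$: then neither rigidity (the combined coefficient $t^{(n)}_j-s_i$ is too small) nor continuity alone controls that term, and one is pushed into the induction on $p$, peeling the ``matched'' part of the configuration and recursing on the rest. Keeping the substitution error genuinely small throughout is the second delicate point, and it is exactly where the finiteness of $\{z_1,\dots,z_q\}$ --- i.e. the hypothesis $\lin\im h=E$ --- is indispensable.
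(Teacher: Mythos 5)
Your Steps A and C are sound and close in spirit to the paper's argument (your case (a) of Step B is exactly the paper's device of applying weak separate rigidity to the combined $(p+q)$-point configuration with coefficients in a fixed annulus). But Step B, which you yourself identify as the heart of the matter, has a genuine gap in case (b) — precisely the situation where some $x^{(n)}_j$ sits near some $z_i$ and $t^{(n)}_j$ may be close to $s_i$, so that neither rigidity nor continuity alone applies. After replacing $h(x^{(n)}_j)$ by $h(z_{i(j)})$ for the matched indices, the error is bounded by $pM\sup\{\|h(x)-h(z_i)\|\dd d(x,z_i)<\rho/(p+q)\}$; this is small when $\rho$ is small, but for fixed $\rho$ it does \emph{not} tend to $0$ as $n\to\infty$, since the matched points are only known to stay within $\rho/(p+q)$ of the $z_i$'s, not to converge to them. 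Consequently your assertion that ``the sum of the remaining (unmatched) terms converges to some $y'$'' is unjustified: that sum is merely eventually within a fixed positive error of a fixed vector, so the inductive hypothesis (which presupposes an exactly convergent sequence) cannot be applied to it. The description of $y'$ is also wrong: if the unmatched part had a limit it would equal $y-\sum_{j\,\mathrm{matched}}\tau_j h(z_{i(j)})$, a linear combination of the $h(z_i)$'s themselves, not a vector ``whose unique finite representation involves only points bounded away from $\{z_i\}$''. Finally, the two scales fight each other: the rigidity lower bound you invoke for the big-coefficient sub-configuration is $C(\rho/(p+q),\cdot,\cdot)$, which may shrink as $\rho\to0$, exactly when the substitution error becomes small, so no contradiction is extracted. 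You flag both difficulties in your closing sentences, but they are not resolved, and as set up the induction is not well-founded (the recursive instance has a different limit, a smaller separation, and only approximate convergence).

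The paper's proof shows how to avoid this trap: it never substitutes approximately nearby points. It proves, by induction on $p$ for \emph{exactly} convergent sequences, first (its Step II, your case (a)) that after passing to a subsequence some coordinate $x^{(n)}_{l_n}$ genuinely converges to one of the limit points, then (its Step I) that the corresponding coefficient converges — this is obtained by subtracting the exactly convergent term $t^{(n)}_1h(x^{(n)}_1)$ and applying the inductive hypothesis to the resulting exactly convergent $(p-1)$-term sums — and only then peels that coordinate off (its Step III). To repair your argument you would either have to arrange actual convergence of the matched points (subsequence extractions, as in the paper) before peeling, or formulate and prove a quantitative version of your Step B that tolerates a fixed substitution error; as written, neither is done, so the proof does not go through.
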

\begin{proof}
Let us agree that $\sum_{j=1}^0 = 0$. It suffices to prove that (under the assumptions of the lemma on $t^{(n)}$
and $x^{(n)}$):
\begin{enumerate}
\item[($\bullet$)] If $\sum_{j=1}^p t^{(n)}_j h(x^{(n)}_j) \to \sum_{k=1}^q t_k h(x_k)$ for some $q \geqsl 0$, nonzero
   reals $t_1,\ldots,t_q$ and distinct points $x_1,\ldots,x_q$ of $X$, then $q = p$ and for some sequence
   $(\sigma_n)_{n \geqsl 1}$ of permutations one has $t^{(n)}_{\sigma_n(j)} \to t_j$ and $x^{(n)}_{\sigma_n(j)} \to x_j$
   for $j=1,\ldots,p$.
\end{enumerate}
We shall show this by induction on $p$, starting with $p = 0$.
For $p = 0$ we only need to note that the image of $h$ is linearly independent and thus $q = 0$. Now assume that we have
proved ($\bullet$) for $p-1$. The proof of ($\bullet$) for $p$ is divided into three steps.\par
\begin{enumerate}
\item[I.] If $x^{(n)}_1 \to x_1$, then $t^{(n)}_1 \to t_1$.
\end{enumerate}
Proof of I. We argue by contradiction. Suppose that $(t^{(n)}_1)_{n \geqsl 1}$ does not converge to $t_1$. Passing
to a subsequence, we may assume that $t^{(n)}_1 \to s \neq t_1$. But then $t^{(n)}_1 h(x^{(n)}_1) \to s h(x_1)$
and hence $\sum_{j=2}^p t^{(n)}_j h(x^{(n)}_j) \to (t_1 - s) h(x_1) + \sum_{k=2}^q t_k h(x_k)$. Now by the induction
hypothesis we have $q = p-1$ and $x^{(n)}_{l_n} \to x_1$ for some sequence of $l_n \in \{2,\ldots,p\}$. This yields
$r \leqsl \delta_p(x^{(n)}) \leqsl d(x^{(n)}_1,x^{(n)}_{l_n}) \leqsl d(x^{(n)}_1,x_1) + d(x_1,x^{(n)}_{l_n}) \to 0$,
which is impossible.
\begin{enumerate}
\item[II.] There are a sequence of $l_n \in \{1,\ldots,p\}$ and $k \in \{1,\ldots,q\}$ such that
   $x^{(n)}_{l_n} \to x_k$.
\end{enumerate}
Proof of II. Observe that $q > 0$, because $h$ is weakly separately rigid. Put $\alpha_n = \min \{d(x^{(n)}_j,x_k)\dd\
j \in \{1,\ldots,p\},\ k \in \{1,\ldots,q\}\}$. It is enough to show that $\lim_{n\to\infty} \alpha_n = 0$. Suppose
that the latter convergence does not hold. Passing to a subsequence, we may assume that $\alpha_n \geqsl c$ for all $n$
and some positive constant $c$. This implies that there is $\epsi > 0$ such that $\delta_{p+q}(y^{(n)}) \geqsl \epsi$
for each $n$ where $y^{(n)} = (x^{(n)}_1,\ldots,x^{(n)}_p,x_1,\ldots,x_q) \in X^{p+q}$. Moreover, there is a constant
$A \geqsl 1$ such that $s^{(n)} \in [-A,-\frac1A]^{p+q} \cup [\frac1A,A]^{p+q} \subset \RRR^{p+q}$, where $s^{(n)} =
(t^{(n)}_1,\ldots,t^{(n)}_p,-t_1,\ldots,-t_q)$. We have $s^{(n)} \bullet h(y^{(n)}) \to 0$ which denies the weak separate
rigidity of $h$.
\begin{enumerate}
\item[III.] There is a sequence of $\sigma_n(1) \in \{1,\ldots,p\}$ such that $x^{(n)}_{\sigma_n(1)}
   \to x_1$.
\end{enumerate}
Proof of III. As in the proof of II, it suffices to show that the sequence of $\gamma_n = \min_j d(x^{(n)}_j,x_1)$
tends to $0$. Again, we argue by contradiction. Passing to a subsequence, applying II and after renumerating of $x^{(n)}$
(which depends on $n$), we may assume that
\begin{equation}\label{eqn:aux}
\gamma_n \geqsl \epsi
\end{equation}
for all $n$ and a positive constant $\epsi$, and $x^{(n)}_1 \to x_c$ for some $c \in \{2,\ldots,q\}$. By I,
$t^{(n)}_1 \to t_c$ and thus $\sum_{j=2}^p t^{(n)}_j h(x^{(n)}_j) \to \sum_{k \neq c} t_k h(x_k)$. So, we infer
from the induction hypothesis that $q = p$ and $x^{(n)}_{j_n} \to x_1$ for some $j_n \in \{2,\ldots,p\}$,
which contradicts \eqref{eqn:aux}.\par
Now ($\bullet$) follows from III, I and the induction hypothesis.
\end{proof}

Applying the above lemma with $p = r = M = 1$ we obtain:

\begin{cor}{embed}
Every weakly separately rigid map $h\dd X \to E$ such that $\lin h(X) = E$ is a closed embedding.
\end{cor}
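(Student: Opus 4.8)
\emph{Proof proposal.}
The plan is to read off everything from \LEM{rigid} in the degenerate case $p = r = M = 1$, exactly as the remark preceding the statement indicates. In that case a $1$-tuple $t^{(n)}$ is forced into $[-1,-1]\cup[1,1] = \{-1,1\}$, the constraint $\delta_1(x^{(n)})\geqsl 1$ is automatic because $\delta_1\equiv 1$, and the only permutation of $\{1\}$ is the identity; taking $t^{(n)}\equiv 1$, the lemma then says precisely that \emph{if the sequence $\bigl(h(x_n)\bigr)_{n=1}^{\infty}$ converges in $E$, then $(x_n)_{n=1}^{\infty}$ converges in $X$}. This is the one fact I intend to use.

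First I would note that $h$ is injective (as remarked after \DEF{rigid}; alternatively, if $h(x)=h(x')$ with $x\neq x'$, then applying weak separate rigidity with $n=2$, $t=(1,-1)$, $M=1$ to the pair $(x,x')$, whose $\delta_2$ value is $r:=d(x,x')>0$, yields $\|h(x)-h(x')\|\geqsl C(r,2,1)>0$, a contradiction). So $h$ is a continuous bijection onto $\im h$. Next I would check continuity of $h^{-1}$: if $h(x_n)\to h(x)$ in $\im h$ (that is, in the norm of $E$), the displayed fact above gives $x_n\to x'$ in $X$ for some $x'$, and then continuity of $h$ together with injectivity forces $x'=x$; since $\im h$ is metrizable, this sequential statement is genuine continuity, so $h$ is an embedding. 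Finally, to see that $\im h$ is closed in $E$: if $h(x_n)\to e\in E$, then $\bigl(h(x_n)\bigr)$ converges in $E$, hence $x_n\to x$ in $X$, hence (continuity of $h$) $e=h(x)\in\im h$.

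I do not anticipate a real obstacle here: the whole difficulty has been absorbed into \LEM{rigid}. The only points requiring a little care are that ``$h(x_n)\to h(x)$'' must be interpreted as convergence in the subspace topology of $\im h\subset E$ (so that the hypothesis of \LEM{rigid} genuinely applies), and that metrizability of the image is what lets us pass from sequential continuity to continuity. The hypothesis $\lin h(X)=E$ is used only to phrase the conclusion as ``$\im h$ is closed in $E$''; in general the same argument shows $\im h$ is closed in its own linear span, which is what the discussion before the corollary was promising.
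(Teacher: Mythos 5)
Your proposal is correct and follows exactly the paper's intended argument: the paper proves the corollary by applying \LEM{rigid} with $p=r=M=1$, which is precisely the fact you isolate (convergence of $(h(x_n))$ in $E$ forces convergence of $(x_n)$ in $X$), and the remaining steps you supply (injectivity, sequential continuity of $h^{-1}$ via metrizability, closedness of $\im h$ by a sequence argument) are the routine details the paper leaves implicit.
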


\begin{pro}{c-sigma}
Let $(X,d)$ be a nonempty metric space and let $\CcC = \FfF(X)$. If $h\dd X \to E$ is a weakly separately rigid map
into a normed space $(E,\|\cdot\|)$ such that $\lin h(X) = E$, then $X \in \FfF(\Sigma E)$
and $\Sigma E \in \CcC_{\sigma}$.
\end{pro}
\begin{proof}
Since the class $\CcC$ is product, we only need to check that $E \in \CcC_{\sigma}$ ($X \in \FfF(\Sigma E)$ thanks
to \COR{embed}). For (positive) natural numbers $p$ and $q$ let $G_{p,q}$ be the set of all vectors of the form
$t \bullet h(x)$ with $t \in [-q,-\frac1q]^p \cup [\frac1q,q]^p$ and $x \in X^p$ such that $\delta_p(x) \geqsl \frac1q$.
\LEM{rigid} shows that $G_{p,q}$ is closed in $E$. What is more, it is easily seen that $E = \{0\} \cup
\bigcup_{p,q \geqsl 1} G_{p,q}$. So, it suffices to show that each of $G_{p,q}$'s is a member of $\CcC_{\sigma}$.
Fix $p$ and $q$ and put $\Delta = \{(t_1,\ldots,t_p) \in \RRR^p\dd\ \frac1q \leqsl |t_j| \leqsl q\}$,
$\Gamma = \{x \in X^p\dd\ \delta_p(x) \geqsl \frac1q\}$ and $\Phi\dd \Delta \times \Gamma \ni (t,x) \mapsto t \bullet x
\in G_{p,q}$. Observe that $\Delta \times \Gamma \in \CcC$ and $\Phi$ is a continuous surjection. What is more,
$\Phi$---as a map of $\Delta \times \Gamma$ into $G_{p,q}$---is open thanks to \LEM{rigid}, and $\card \Phi^{-1}(\{v\})
= p!$ for each $v \in G_{p,q}$ (because $\Phi(t,x) = \Phi(t',x')$ iff $t' = t_{\sigma}$ and $x' = x_{\sigma}$
for a unique permutation $\sigma$ of $\{1,\ldots,p\}$). This implies that $\Phi$ is a covering and therefore it is
a local homeomorphism. Take a small enough open cover $\{U_s\}_{s \in S}$ of $G_{p,q}$ and a corresponding family
$\{V_s\}_{s \in S}$ of relatively open subsets of $\Delta \times \Gamma$ such that $\Phi$ restricted to each $V_s$
is a homeomorphism of $V_s$ onto $U_s$. Next find an open cover $\WWw = \bigcup_{n=1}^{\infty} \WWw_n$ of $G_{p,q}$
such that $\WWw_n = \{W_{s,n}\}_{s \in S}$ is discrete in $G_{p,q}$ and $W_{s,n} \subset U_s$ for each $s \in S$
and $n \geqsl 1$. Then the set $D_n = \bigcup_{s \in S} W_{s,n}$ is open in $G_{p,q}$ and is homeomorphic
to $\bigcup_{s \in S} (\Phi^{-1}(W_{s,n}) \cap V_s)$. We conclude from this that $D_n \in \CcC_{\sigma}$ and thus also
$G_{p,q} \in \CcC_{\sigma}$ (since $G_{p,q} = \bigcup_{n=1}^{\infty} D_n$ and each $D_n$ is $\FFf_{\sigma}$ in $G_{p,q}$).
\end{proof}

We are now ready to prove

\begin{thm}{rigid}
If $(X,d)$ is an absolute retract having more than one point and $h\dd X \to E$ is a weakly separately rigid map
into a normed space $(E,\|\cdot\|)$ such that $\lin h(X) = E$, then $W(X,*) \cong \Sigma E$.
\end{thm}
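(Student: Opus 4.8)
The plan is to recognise both $W(X,*)$ and $\Sigma E$ as absorbing sets for one and the same class of spaces, and then to invoke the uniqueness theorem (SU5) of \THM{several}.

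First I would set $\CcC = \FfF(X)$. As recalled after \PRO{cl-emb}, this class is topological, closed hereditary and product, and additive (since $X$ is an AR); hence $\CcC_\sigma$ is topological, closed hereditary and additive. Next observe that $\Sigma E = W(E,0)$, where $E$ is a nonzero normed space --- hence a convex, and therefore absolute retract, metrizable space with more than one point. Thus by (SU7) of \THM{several} the space $\Sigma E$ is $\DdD_E$-absorbing, where $\DdD_E$ is the (topological, closed hereditary, additive) class of all metrizable spaces admitting a closed embedding into $\Sigma E$; likewise $W(X,*)$ is $\DdD_X$-absorbing, where $\DdD_X$ consists of all metrizable spaces admitting a closed embedding into $W(X,*)$.

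The crucial step is the identification $(\DdD_X)_\sigma = (\DdD_E)_\sigma = \CcC_\sigma$; for this it suffices to check $\CcC \subseteq \DdD_X \cap \DdD_E$ and $\DdD_X \cup \DdD_E \subseteq \CcC_\sigma$. Indeed, $W(X,*) = \bigcup_{n \geqsl 1} X^n$ is a countable union of closed subsets lying in $\FfF(X)$, so $W(X,*) \in \CcC_\sigma$, whence $\DdD_X \subseteq \CcC_\sigma$ by closed-heredity of $\CcC_\sigma$; and $\Sigma E \in \CcC_\sigma$ by \PRO{c-sigma}, whence $\DdD_E \subseteq \CcC_\sigma$. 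Conversely, a member $C$ of $\FfF(X)$ embeds as a closed subset of $X^n \times I^n$; using that $X$, being an AR with more than one point, contains a closed arc, $X^n \times I^n$ embeds closedly in $X^{2n}$, which is a closed subset of $W(X,*)$, so $C \in \DdD_X$. For $C \in \DdD_E$: by \COR{embed} the map $h$ is a closed embedding of $X$ into $E$, hence (composing with $v \mapsto (v,0,0,\dots)$) into $\Sigma E$; also $I$ embeds closedly into $E$ (as a closed subinterval of a one-dimensional subspace), and $(\Sigma E)^k \cong \Sigma E$ for each $k$ by interleaving coordinates, so $C \hookrightarrow X^n \times I^n \hookrightarrow (\Sigma E)^{2n} \cong \Sigma E$ closedly.

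Now I would apply \PRO{sigma} to each of $W(X,*)$ (with the class $\DdD_X$) and $\Sigma E$ (with $\DdD_E$). Both spaces are ANR's and $\sigma$-$Z$-spaces (this is part of being absorbing), both have the strong $Z$-set property (being homotopy dense embeddable in a Hilbert manifold they enjoy $\omega$-LFAP and the $\kappa$-discrete $m$-cells property, so every $Z$-set in them is strong by (Z7) of \PRO{strong}), $W(X,*)$ is strongly $\DdD_X$-universal and lies in $\DdD_X$, and $\Sigma E$ is strongly $\DdD_E$-universal and lies in $\DdD_E$. Hence \PRO{sigma} gives that $W(X,*)$ is $(\DdD_X)_\sigma$-absorbing and $\Sigma E$ is $(\DdD_E)_\sigma$-absorbing; by the previous paragraph both are therefore $\CcC_\sigma$-absorbing. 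Finally, both $W(X,*)$ and $\Sigma E$ are contractible --- $\Sigma E$ is convex, and $W(X,*)$ deformation-contracts to its basepoint via a coordinatewise application of a contraction of $X$ --- hence of the same homotopy type, so (SU5) of \THM{several} yields $W(X,*) \cong \Sigma E$.

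The main obstacle is the verification that \PRO{sigma} genuinely applies to $W(X,*)$ and to $\Sigma E$ --- in particular that a (nonseparable) absorbing set automatically has the strong $Z$-set property --- together with the bookkeeping needed to see that $(\DdD_X)_\sigma$, $(\DdD_E)_\sigma$ and $\CcC_\sigma$ all coincide; once these are in place the conclusion is merely an assembly of results already established.
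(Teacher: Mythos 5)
Your argument is correct, and it draws on exactly the same toolbox as the paper --- \COR{embed}, \PRO{c-sigma}, \PRO{sigma} and (SU7) --- but assembles it differently at the last step. The paper's proof is a two-line reduction: since $\Sigma E = W(E,0)$ and $E$ is an AR, the final claim of (SU7) (mutual closed embeddability of $X$ into $W(E,0)$ and of $E$ into $W(X,*)$) is the uniqueness device; $X\hookrightarrow \Sigma E$ comes from \COR{embed}, and $E\hookrightarrow W(X,*)$ comes from \PRO{c-sigma} together with (SU7) and \PRO{sigma} applied to $W(X,*)$ alone. You instead bypass that criterion: you verify that $(\DdD_X)_\sigma=(\DdD_E)_\sigma=\FfF(X)_\sigma$, upgrade both $W(X,*)$ and $\Sigma E$ to $\FfF(X)_\sigma$-absorbing sets via \PRO{sigma}, and finish with (SU5) plus contractibility. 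What this buys is independence from the last sentence of (SU7) (which, as noted before \PRO{sigma}, is itself proved by the same mechanism), at the price of having to check the hypotheses of \PRO{sigma} twice --- in particular the strong $Z$-set property, which you correctly extract from homotopy dense embeddability via Banakh's theorem and (Z7) --- and of supplying the homotopy-type input for (SU5). Two small points to tidy up: in passing from the $w$-discrete cells property given by Banakh's theorem to the $\alpha$-discrete cells property for all $\alpha<\comp_l$ required in (Z7) one should note the easy restriction argument (together with \LEM{comp}); and the coordinatewise contraction of $W(X,*)$ requires a contraction of $X$ keeping the basepoint $*$ fixed throughout (which exists since $X$ is an AR), or one may simply quote that $W(X,*)$ is an AR. Neither affects the validity of the proof.
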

\begin{proof}
By (SU7) and \COR{embed}, it is enough to show that $E$ is closed embeddable in $W(X,*)$. But this follows
from \PRO{c-sigma}, (SU7) and \PRO{sigma}.
\end{proof}

Now we shall give an example of rigid embeddings.

\begin{exm}{H}
The following construction is due to Bessaga and Pe\l{}\-czy\'{n}\-ski \cite{b-p2} (or \cite[Proposition VI.7.1]{b-p}).
Let $(X,d)$ be a nonempty metric space. For each $n \geqsl 1$ take a locally finite partition of unity
$\{f_{\lambda}\}_{\lambda \in \Lambda_n}$ such that
\begin{equation}\label{eqn:pou}
f_{\lambda}(x) \cdot f_{\lambda}(y) = 0 \textup{ whenever $\lambda \in \Lambda_n$ and } d(x,y) \geqsl 2^{-n}
\end{equation}
and put $g_{\lambda} = 2^{-n} f_{\lambda}$ (for $\lambda \in \Lambda_n$). Assuming that the sets
$\Lambda_1,\Lambda_2,\ldots$ of indices are pairwise disjoint, put $\Lambda = \bigcup_{n=1}^{\infty} \Lambda_n$ and define
$h\dd X \to l^2(\Lambda) = \{g\dd \Lambda \to \RRR|\ \|g\|_2^2 = \sum_{\lambda \in \Lambda} g(\lambda)^2 < +\infty\}$
by $(h(x))(\lambda) = \sqrt{g_{\lambda}(x)}$. Then $h$ is continuous and $\im h$ is contained in the unit sphere
of $l^2(\Lambda)$. What is more, $h$ is rigid: if $p \geqsl 1$, $t = (t_1,\ldots,t_p) \in \RRR^p$, $x = (x_1,\ldots,x_p)
\in X^p$ and $\delta_p(x) \geqsl 2^{-n}$, then $\|t \bullet h(x)\|_2^2 \geqsl \sum_{\lambda\in\Lambda_n} 2^{-n}
(\sum_{j=1}^p t_j \sqrt{f_{\lambda}(x_j)})^2$ and \eqref{eqn:pou} gives
$$
\|t \bullet h(x)\|_2^2 \geqsl 2^{-n} \sum_{\lambda\in\Lambda_n} \sum_{j=1}^p t_j^2 f_{\lambda}(x_j)
= 2^{-n} \sum_{j=1}^p t_j^2 \sum_{\lambda\in\Lambda_n} f_{\lambda}(x) = 2^{-n} |t|_2^2.
$$
This example shows that every metric space admits a rigid embedding into a Hilbert space.
\end{exm}

Now we have

\begin{thm}{pre-H}
For an arbitrary AR $X$, the weak product $W(X,*)$ is homeomorphic to some pre-Hilbert space $E$ such that
$E \cong \Sigma E$.
\end{thm}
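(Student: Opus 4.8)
The plan is to read the theorem off \THM{rigid} and \EXM{H} with essentially no extra work. First dispose of the trivial case: if the (metrizable, nonempty) AR $X$ is a single point, then $W(X,*)$ is a single point, hence homeomorphic to the pre-Hilbert space $E=\{0\}$, which plainly satisfies $E\cong\Sigma E$. So from now on assume $X$ has more than one point and fix a metric $d$ on $X$ inducing its topology. By \EXM{H} there is a rigid map $h\dd(X,d)\to\HHh$ of $X$ into a Hilbert space $\HHh$; a rigid map is in particular weakly separately rigid (\DEF{rigid}). Put $E_0=\lin h(X)$. Since rigidity is a condition on norms only and $E_0$ carries the norm inherited from $\HHh$, the corestricted map $h\dd X\to E_0$ is still weakly separately rigid and now satisfies $\lin h(X)=E_0$. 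As $X$ is an absolute retract with more than one point, \THM{rigid} applies and gives $W(X,*)\cong\Sigma E_0$.

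Next, set $E=\Sigma E_0$. This is again a pre-Hilbert space: it is the algebraic direct sum of countably many copies of $E_0$, equipped with the natural inner product (the finite sum of the coordinate inner products). By the homeomorphism just obtained, $W(X,*)\cong E$, so it only remains to verify $E\cong\Sigma E$. But $\Sigma E=\Sigma(\Sigma E_0)$ may be identified with the set of all families $(x_{n,k})$ in $E_0$ indexed by pairs of positive integers and with only finitely many nonzero terms; fixing a bijection $\beta$ between the positive integers and the set of such pairs, the reindexing map $(x_{n,k})\mapsto(x_{\beta(m)})_{m\geqsl1}$ is a linear homeomorphism (indeed a linear isometry) of $\Sigma(\Sigma E_0)$ onto $\Sigma E_0=E$. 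Hence $E\cong\Sigma E$, and $E$ is the required pre-Hilbert space.

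There is no serious obstacle here; the argument is just an assembly of the preceding results. The two points that genuinely need a word of care are (i) the reduction to "more than one point", so that \THM{rigid} is legitimately applicable, and (ii) the corestriction of the rigid embedding of \EXM{H} from $\HHh$ to $E_0=\lin h(X)$, which is what makes the hypothesis $\lin h(X)=E$ of \THM{rigid} hold. I also note that the argument is insensitive to the precise topological convention adopted for $\Sigma E$: whether $\Sigma E_0$ carries the $\ell^2$-type norm or the product topology inherited from $E_0^{\omega}$, the reindexing map above is still a homeomorphism, and in the latter reading one additionally observes that this $\Sigma E_0$ is homeomorphic to the (genuinely normed) pre-Hilbert space of its finitely supported elements.
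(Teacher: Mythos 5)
Your first half is exactly the paper's: the one‑point case, the rigid embedding $h$ of \EXM{H}, the corestriction to $E_0=\lin h(X)$ (and rigid does imply weakly separately rigid), and \THM{rigid} giving $W(X,*)\cong\Sigma E_0$. The gap is in the second half. In this paper $\Sigma E_0$ denotes the set of finitely supported sequences topologized as a subspace of $E_0^{\omega}$ with the product (Tichonov) topology, and with that topology it is \emph{not} a pre-Hilbert space --- it is not even normable, since (as $E_0\neq\{0\}$) every basic neighbourhood of $0$ contains the nonzero linear subspace of sequences vanishing on the finitely many restricted coordinates. So ``set $E=\Sigma E_0$; this is again a pre-Hilbert space'' is not correct as stated; what you actually construct, by equipping the algebraic direct sum with the coordinatewise inner product, is the normed space $\Sigma_{l^2}E_0$, whose topology is strictly finer than the product topology (the copies of a fixed unit vector placed in the $k$-th coordinate converge to $0$ in $E_0^{\omega}$ but stay at norm $1$). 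With $E=\Sigma_{l^2}E_0$ two identifications are then missing: $W(X,*)\cong E$ needs $\Sigma E_0\cong\Sigma_{l^2}E_0$, and $E\cong\Sigma E$ needs $\Sigma_{l^2}E_0\cong\Sigma(\Sigma_{l^2}E_0)$, where the right-hand side again carries the product topology. Your reindexing map identifies the iterated weak sums only when both sides carry the same topology; it does not bridge the two conventions, and no linear map can. Hence the closing remark that the argument is ``insensitive to the precise topological convention'' dismisses precisely the point where the real content lies.

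This is the step the paper covers by an external theorem: it puts $E=\Sigma_{l^2}F$ for the pre-Hilbert space $F$ produced by \EXM{H} and \THM{rigid} and invokes Toru\'{n}czyk \cite[Corollary~1.9]{tor3} to obtain $E\cong\Sigma F$; the same corollary applied to $E$, combined with the linear isometry $\Sigma_{l^2}E\cong E$ coming from reindexing, then yields $E\cong\Sigma E\cong W(X,*)$. The homeomorphism between the product-topology weak sum and the $l^2$-normed direct sum is a genuine (necessarily nonlinear) theorem of infinite-dimensional topology, of the same calibre as the classical identification of the finitely supported sequences in $\RRR^{\omega}$ with $l^2_f$, and cannot be obtained from the identity map or a coordinate permutation. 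Until you quote or prove a result of this kind, your argument establishes only $W(X,*)\cong\Sigma E_0$, not the statement of the theorem.
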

\begin{proof}
Clearly, we may assume that $X$ has more than one point. By \EXM{H} and \THM{rigid}, there is a pre-Hilbert space $F$
with an inner product $\scalarr_F$ such that $W(X,*) \cong \Sigma F$. Let $E = \Sigma_{l^2} F$, i.e. $E$ is the set
$\Sigma F$ with the norm $\|(x_n)_{n=1}^{\infty}\|_E = \sqrt{\sum_{n=1}^{\infty} \scalar{x_n}{x_n}_F}$. There is a natural
inner product in the space $E$ inducing the norm $\|\cdot\|_E$ and, e.g. by \cite[Corollary~1.9]{tor3},
$E \cong \Sigma F$. So, $E \cong \Sigma E \cong W(X,*)$.
\end{proof}

It is easily seen by Toru\'{n}czyk's characterization theorem for Hilbert manifolds \cite{tor1,tor2} that a connected
metrizable space $X$ is a Hilbert manifold iff $X$ is a noncompact completely metrizable ANR such that the natural
projection $X \times X \to X$ is approximable by closed embeddings. In similar spirit, with use of \THM{pre-H}, we now
characterize manifolds modelled on pre-Hilbert spaces $E$ with $E \cong \Sigma E$.

\begin{pro}{charact}
Let $X$ be a connected nonempty metrizable space. Let $\Omega = X$ if $X$ is contractible and otherwise
let $\Omega$ be the topological open or closed cone over $X$. \TFCAE
\begin{enumerate}[\upshape(i)]
\item $X$ is homeomorphic to (an open subset of) some nonzero pre-Hilbert space $E$ with $E \cong \Sigma E$,
\item $X$ is a $\CcC$-absorbing AR (ANR) for some topological, closed hereditary, additive and product class $\CcC$,
\item $X$ is an AR (ANR) and a $\sigma$-$Z$-space such that for every $Z$-set $K$ in $X$ the natural projection
   $(X \setminus K) \times \Omega \to X \setminus K$ is a near-homeomorphism, i.e. it is approximable by homeomorphisms.
\end{enumerate}
\end{pro}
\begin{proof}
To prove that (iii) follows from (i) it suffices to apply a variation (cf. \cite[Theorem 5]{h-w}) of Schori's
theorem \cite{schori} (note that both the open and closed cones over $E$ are homeomorphic to $E$ for spaces $E$
as in (i)---see e.g. \cite{henderson2}---and thus if $X$ is an open subset of $E$, then both the open and closed cones
over $X$ are factors of $E$, thanks to \cite{tor4,tor3} or \cite[Corollary~5.4]{b-m}).\par
To see that (i) is implied by (ii), first observe that the closed cone $\Omega$ of $X$ belongs to $\CcC_{\sigma}$
and therefore $X \times W(\Omega,*) \in \CcC_{\sigma}$ as well (since $\CcC$ is product). Further, (SU4) yields that
$X \times W(\Omega,*)$ is $\CcC$-absorbing. So, $X$ and $X \times W(\Omega,*)$ are homeomorphic (thanks to (SU5)).
Now use \THM{pre-H} and Toru\'{n}czyk's Factor Theorem \cite{tor4,tor3} to finish the proof.\par
We pass to showing that (ii) follows from (iii). Let $\CcC$ be the class of all topological spaces which are closed
embeddable in $\Omega$. Clearly, $\CcC$ is topological and closed hereditary. By hypothesis, $X \times \Omega \cong X$
and thus also $X \times \Omega \times \Omega \cong X$. This yields that
\begin{equation}\label{eqn:aux10}
\Omega \times \Omega \text{ is closed embeddable in } \Omega
\end{equation}
(note that $X \in \CcC$). Moreover, $\Omega$ is noncompact because $X$ is a $\sigma$-$Z$-space. So, we infer
from \PRO{cl-emb} that $\CcC$ is open hereditary and additive. Noticing that $\CcC$ is also product
thanks to \eqref{eqn:aux10}, we see that to finish the proof it suffices to show that $X$ is $\CcC$-absorbing. We shall do
this using (SU2).\par
Let $K$ be a $Z$-set in $X$. For simplicity, put $U = X \setminus K$ and let $\pi\dd U \times \Omega \times \Omega \to U$
be the natural projection. It follows from the assumptions that $\pi$ is a near-homeomorphism. This, combined
with \LEM{omega}, yields that the natural projection $U \times w(U) \to U$ is approximable by closed embeddings
and therefore $U$ is homotopy dense embeddable in a Hilbert manifold (thanks to Banakh's theorem \cite{banakh}).
Fix any $x_0 \in X$ and put $a = (x_0,1) \in \Omega$. The set $\{a\}$ is a $Z$-set in $\Omega$ (because $\{x_0\}$ is
a $Z$-set in $X$, which is implied by the fact that $X$ is a $\sigma$-$Z$-space). Now take a space $C \in \CcC$ and a map
$f\dd C \to U$. There is a closed embedding $u\dd C \to \Omega$. We see that $v\dd C \ni x \mapsto (f(x),u(x),a) \in U
\times \Omega \times \Omega$ is a $Z$-embedding such that $\pi \circ v = f$. So, since $\pi$ is a near-homeomorphism,
$f$ is approximable by $Z$-embeddings. This shows that $U$ is $\CcC$-universal. Hence an application of (SU2) finishes
the proof.
\end{proof}

We do not know whether in the above result it suffices to check the point (iii) only for $K = \varempty$
to obtain (i).\par
The technique of rigid embeddings enables us to give a simple proof of the following generalization of a special case
of \cite[Theorem 2.4.2]{brz}:

\begin{pro}{product}
Let $\CcC$ be a topological, closed hereditary, additive and product class of metrizable spaces such that $I \in \CcC$.
There is a $\CcC$-absorbing pre-Hilbert space $F$ with $F \cong \Sigma F$ iff there is a space $X$ such that
\begin{enumerate}[\upshape(U1)]
\item $X \in \CcC_{\sigma}$ and
\item every member of $\CcC$ is closed embeddable in $X$.
\end{enumerate}
\end{pro}
\begin{proof}
The necessity is clear. To prove the sufficiency, embed rigidly the space $X$ into a pre-Hilbert space $E$ in such a way
that $E$ coincides with the linear span of the image of the embedding and then apply \PRO{c-sigma} and (SU7).
\end{proof}

\begin{exm}{fd}
For a countable ordinal $\alpha > 0$ and an infinite cardinal $\mM$ let $\MmM_{\alpha}(\mM)$ [$\MmM_{\alpha}^f(\mM)$]
and $\AaA_{\alpha}(\mM)$ [$\AaA_{\alpha}^f(\mM)$] be the class of all (metrizable) [finite dimensional] spaces
of the absolute multiplicative and additive (respectively) Borelian class $\alpha$ and of weight no greater than $\mM$
(for definition and more on these classes see e.g. \cite{stone1,stone2}, \cite{hansell} or \cite{kur}, where the Reader
can find more references concerning the subject). It is easily seen that each of these classes is topological, closed
hereditary, additive and product. Bestvina and Mogilski \cite{b-m} have proved that for an arbitrary $\alpha$, there are
an $\MmM_{\alpha}(\aleph_0)$-absorbing AR and an $\AaA_{\alpha}(\aleph_0)$-absorbing one, and Banakh, Radul and Zarichnyi
have shown that also the classes $\MmM_{\alpha}^f(\aleph_0)$ and $\AaA_{\alpha}(\aleph_0)$ have absorbing AR's (see
\cite[Theorem~2.4.9]{brz}). Note that the classes $\MmM_1$ and $\AaA_1$ consist of \textit{absolute
$\GGg_{\delta}$-spaces} and \textit{absolute $\FFf_{\sigma}$-spaces}, respectively. It is well known that a metrizable
space is of absolute $\GGg_{\delta}$-class iff it is completely metrizable, and such a space is of absolute
$\FFf_{\sigma}$-class iff it is $\sigma$-locally compact (\cite{stone0}), that is, it is the countable union of its
locally compact subsets or, equivalenty, the countable union of its locally compact closed subsets. This easily implies
that $\Sigma l^2(\mM)$ is an $\MmM_1(\mM)$-absorbing AR for each infinite cardinal $\mM$ (where $l^2(\mM)$ is the Hilbert
space with an orthonormal basis of cardinality $\mM$), which was obtained e.g. by Toru\'{n}czyk \cite{tor?}. Further,
in \cite{tsuda} Tsuda has proved that for each natural $n$ and infinite cardinal $\mM$ there is a completely metrizable
finite dimensional space $T_n(\mM)$ of weight $\mM$ such that every completely metrizable space $X$ with $\dim X \leqsl n$
and $w(X) \leqsl \mM$ is closed embeddable in $T_n(\mM)$. This implies that the topological disjoint union
$T(\mM) = \bigoplus_{n=1}^{\infty} T_n(\mM)$ of the spaces $T_n(\mM)$ contains a closed copy of every member
of $\CcC = \MmM_1^f(\mM)$ and belongs to $\CcC_{\sigma}$. We conclude from this and \PRO{product} that $\MmM_1^f(\mM)$
has an absorbing AR. Analogous results hold for the classes $\AaA_1$: the space $l^2_f(\mM) =$ the linear span of a fixed
orthonormal basis of $l^2(\mM)$ is an $\AaA_1^f(\mM)$-absorbing AR (cf. \cite{west}). Indeed, it is clear that
$\Sigma l^2_f(\mM) \cong l^2_f(\mM)$ and that $I^n \times \mM$ is closed embeddable in $l^2_f(\mM)$ for each natural $n$.
Since every finite dimensional locally compact metrizable space $X$ of weight no greater than $\mM$ is the union of its
two closed subsets each of which is closed embeddable in $I^n \times \mM$ (for some $n$), thus each such a space $X$
is closed embeddable in $l^2_f(\mM)$ and therefore---by \PRO{sigma}---$\AaA_1^f(\mM) \subset \FfF(l^2_f(\mM))$.
On the other hand, if $Y$ is the closed cone over the topological disjoint union of the spaces $I^n \times \mM$
($n=1,2,\ldots$ and $\mM$ is fixed), then $Y$ is $\sigma$-finite dimensional (i.e. $Y$ is the countable union of its
closed finite dimensional subsets) and $\sigma$-locally compact and $W(Y,*)$ is therefore an $\AaA_1^f(\mM)$-absorbing AR.
So, (SU5) gives $W(Y,*) \cong l^2_f(\mM)$. The latter method of finding an $\AaA_1^f(\mM)$-absorbing AR works also
for the classes $\AaA_1(\mM)$: it suffices to take $W(Z,*)$ where $Z$ is the closed cone over $Q \times \mM$.\par
For ordinals $\alpha$ greater than $1$ (and uncountable cardinals $\mM$) and $\CcC \in \{\MmM_{\alpha}(\mM),
\MmM_{\alpha}^f(\mM), \AaA_{\alpha}(\mM), \AaA_{\alpha}^f(\mM)\}$ there are $\CcC$-absorbing sets if only there are spaces
$X$ satisfying (U1) and (U2) (thanks to \PRO{product}). The author however has no knowledge whether such spaces exist.
\end{exm}

We end the paper with the following

\begin{pro}{iso-rigid}
Every nonempty metric space is isometric to a subset $A$ of some normed linear space $(E,\|\cdot\|)$ such that
for each $n \geqsl 1$, $t \in \RRR^n$ and $a \in A^n$,
\begin{equation}\label{eqn:l1}
\|t \bullet a\| \geqsl \min(\frac12 \delta_n(a),1) |t|_1,
\end{equation}
and if the metric of the given space is upper bounded by $2$, $A$ may be taken to be contained in the unit sphere of $E$.
\end{pro}
\begin{proof}
We shall improve Michael's proof \cite{michael2} (cf. \cite[Theorem II.1.2]{b-p}) of the Arens-Eells theorem \cite{a-e}.
Let $(X,d)$ be a metric space and $x_*$ an arbitrarily chosen element of $X$. If $d$ is upper bounded by $2$, put
$\psi \equiv 1\dd X \to [1,+\infty)$; otherwise let $\psi\dd X \ni x \mapsto d(x,x_*) + 1 \in [1,+\infty)$. Take $\omega
\notin X$. We extend the metric $d$ to a metric on the set $\tilde{X} = X \cup \{\omega\}$ by putting $d(x,\omega)
= \psi(x)$ for $x \in X$. Now let $\Gamma$ consists of all $d$-nonexpansive maps of $\tilde{X}$ into $\RRR$ which vanish
at $\omega$. That is, $u\dd \tilde{X} \to \RRR$ belongs to $\Gamma$ iff $|u(z_1) - u(z_2)| \leqsl d(z_1,z_2)$ for all
$z_1,z_2 \in \tilde{X}$ and $u(\omega) = 0$. Let $l^{\infty}(\Gamma)$ be the Banach space of all real-valued bounded
functions on $\Gamma$ equipped with the supremum norm. For $x \in X$ denote by $\delta_x \in l^{\infty}(\Gamma)$
the evaluation map of $x$, i.e. $\delta_x(u) = u(x)$ for $u \in \Gamma$. It is easily seen that the map $h\dd (X,d) \ni x
\mapsto \delta_x \in (l^{\infty}(\Gamma),\|\cdot\|)$ is isometric and that $\im h$ is contained in the unit sphere
of $l^{\infty}(\Gamma)$ if $d$ is upper bounded by $2$. We shall check that \eqref{eqn:l1} holds.\par
Let $t = (t_1,\ldots,t_n) \in \RRR^n$ and $x = (x_1,\ldots,x_n) \in X^n$ be such that $\delta_n(x) \geqsl r \in (0,2]$.
Observe that the map $u_0\dd \{x_1,\ldots,x_n\} \to \RRR$ given by $u_0(x_j) = \frac{r}{2}\sgn t_j$ (where $\sgn$ is
the sign function) is $d$-nonexpansive and takes values in $[-1,1]$. This yields that there is a map $u \in \Gamma$
which extends $u_0$. But then $\|\sum_{j=1}^n t_j \delta_{x_j}\| \geqsl |\sum_{j=1}^n t_j u(x_j)| = \frac{r}{2}|t|_1$.
This, together with the fact that $h$ is isometric, gives \eqref{eqn:l1}.
\end{proof}

Having in mind \EXM{H} and \PRO{iso-rigid}, the following may be an interesting question:\par
\textit{Is every metrizable space homeomorphic to a rigid subset of the unit sphere of some Hilbert space?}

\end{document}